\documentclass[english,leqno,11pt,a4paper]{amsart}

\usepackage{amsmath,amstext}
\usepackage{amsthm, amssymb}
\usepackage{endnotes}

\usepackage{pifont}
\usepackage{hyperref}
\usepackage{bookmark}

\usepackage{srcltx}
\usepackage[english]{babel}
\usepackage[all]{xy}

\usepackage[utf8]{inputenc}
\usepackage[T1]{fontenc}

\newtheorem{thmintro}{Theorem}

\newtheorem{corintro}[thmintro]{Corollary}
\newtheorem{lemintro}[thmintro]{Lemma}

\newtheorem{questionintro}[thmintro]{Question}

\newtheorem{thm}{Theorem}[section]
\newtheorem{cor}[thm]{Corollary}
\newtheorem{lemma}[thm]{Lemma}
\newtheorem{prop}[thm]{Proposition}

\theoremstyle{plain}
\newtheorem{defn}[thm]{Definition}
\newtheorem{Definition}[thm]{Definition}

\theoremstyle{definition}
\newtheorem{rmk}[thm]{Remark}

\numberwithin{equation}{section}

\def\beq{\begin{equation}}
\def\eeq{\end{equation}}


\def\Z{{\mathbb Z}}

\def\C{{\mathbb C}}

\def\l{\left}
\def\r{\right}

\def\ds{\displaystyle}

\newcommand{\ZZ}{\mathbb{Z}}
\newcommand{\FF}{\mathbb{F}}
\newcommand{\CC}{\mathbb{C}}
\newcommand{\QQ}{\mathbb{Q}}

\newcommand{\GG}{\mathbb{G}}

\newcommand{\NN}{\mathbb{N}}

\def\CC{{\mathbb{C}}}
\def\ZZ{{\mathbb{Z}}}

\def\cM{{\mathcal M}}

\def\cP{{\mathcal P}}

\def\sg{\sigma}

\def\la{\lambda}
\def\La{\Lambda}

\def\deg{\mathop{\rm deg}}

\def\GL{\mathop{\rm GL}}


\def\Cas{\mathop{\rm Cas}}


\def\$$endproof{\eqno{\qedhere}$$\end{proof}}


\makeindex

\title[The Carlitz module]{The Carlitz module and an Ax-Lindemann-Weierstrass Theorem related to Euler's gamma function}

\author{Lucia Di Vizio}
\address{Lucia Di Vizio,
Laboratoire de Math\'ematiques UMR 8100, CNRS,
Universit\'e de Versailles-St Quentin,
45 avenue des \'Etats-Unis
78035 Versailles cedex, France.\\
{\tt divizio@math.cnrs.fr}}
\author{Federico Pellarin}
\address{Federico PELLARIN, Dipartimento di Matematica Guido Castelnuovo, Universit\`a Sapienza, Piazzale Aldo Moro 5, 00185 Rome, Italy\\
{\tt federico.pellarin@uniroma1.it}}

\date{\today}


\begin{document}

\bibliographystyle{amsalpha}

\begin{abstract}
We prove a differential transcendence result of type ``Ax-Lindemann-Weierstrass'' for Euler's
gamma function. Given meromorphic functions $\zeta_1,\dots,\zeta_n$ of a complex variable $\nu$
that are pairwise distinct modulo $\Z$ and algebraic over the field $k$ of meromorphic $1$-periodic functions,
the functions
$ \Gamma(\nu-\zeta_1(\nu)),\dots,\Gamma(\nu-\zeta_n(\nu))$ are differentially independent over the field $k(\nu)$.
\par
We determine the structure of certain difference field extensions related to the torsion
of an avatar of the Carlitz module over meromorphic functions. These extensions are abelian and purely transcendental, the latter property being crucial in our main result, and obtained applying a criterion of differential algebraicity of Hardouin and Singer.
\end{abstract}

\maketitle

\setcounter{tocdepth}{1}
\tableofcontents


\section{Introduction}

Let $k$ be the field of $1$-periodic meromorphic functions over $\C$.
The identity function
$\nu\mapsto\nu$ over $\CC$
is transcendental over $k$, and we consider the field of rational
functions $K:=k(\nu)$, which is a subfield of the field $\cM$ of meromorphic functions over $\C$.
We use an analogue of Carlitz module, introduced in \cite{pellarinGeneralizedCarlitzModule2013},
for the $k$-linear finite difference operator
\begin{equation}\label{definition-tau-difference}
\tau:f(\nu)\mapsto f(\nu+1),
\end{equation}
to study certain abelian extensions of $K$
generated by the torsion. Such extensions have a deep connection with Euler's gamma function, which is
the meromorphic function over $\CC$ defined, for $\nu\in\CC\setminus\{-1,-2,\ldots\}$ by
    $$
    \Gamma(\nu):=\frac{1}{\nu}\prod_{m\geq1}\left(\Big(1+\frac{\nu}{m}\Big)^{-1}\Big(1+\frac{1}{m}\Big)^{\nu}\right).
    $$
Using this theory and a criterion of differential independence by Hardouin and Singer \cite{hardouinDifferentialGaloisTheory2008},
we prove the following result of type ``Ax-Lindemann-Weierstrass'' for Euler's
gamma function. See Theorem~\ref{thm:Holder-ALW} below:

\begin{thmintro}\label{thm:Holder-ALW-Intro}
Let $\zeta_1,\dots,\zeta_n$ be algebraic over $k$ and pairwise distinct modulo $\Z$.
Then the functions $ \Gamma(\nu-\zeta_1(\nu)),\dots,\Gamma(\nu-\zeta_n(\nu))$ are differentially independent over $K=k(\nu)$.
\end{thmintro}

In the statement above: (1) for any function $\xi$ algebraic over $K$ and any meromorphic function $f$ over $\CC$, the composition
        $\nu\mapsto f(\xi(\nu))$ can be viewed as a meromorphic function over a suitable non-empty open subset of $\CC$,
(2) meromorphic functions
        $f_1,\ldots,f_n$ defined over a non-empty open subset of $\CC$ are {\em differentially independent over
        the base field $K$}
        if there is no non-trivial algebraic differential dependence relation over $K$ between
        them, with respect to the derivation $D:=\frac{d}{d\nu}$.

We justify Ax-Lindemann-Weierstrass in the following way. Tracing analogies between differentially algebraic functions
and algebraic numbers, the functions $\zeta_i(\nu)$ and $\nu-\zeta_i(\nu)$ being differentially algebraic, they play the role of algebraic numbers for the classical Lindemann-Weierstrass Theorem (for evaluations of the exponential, see \cite{waldschmidt1983} for an account of the theory). At once, in Theorem \ref{thm:Holder-ALW-Intro}, we compose $\Gamma$ with certain meromorphic functions, which is reminiscent of  Ax's seminal paper \cite{ax1971schanuel}, addressing one of the first  functional problems in this range, for the exponential function.

\smallskip

We present some consequences of our result.
With $n$ an integer $>1$, we consider the constant functions $$\zeta_1=0\quad\zeta_2=\frac{1}{n}\quad\cdots\quad\zeta_n=\frac{n-1}{n},$$ that are pairwise distinct modulo $\ZZ$. Our theorem implies

\begin{corintro}
The functions
$\Gamma\big(\nu+\frac{k}{n}\big)$, for $k=0,\ldots,n-1$, are differentially independent over $K$.
\end{corintro}

For $n=1$ this implies Hölder's theorem \cite{holder1886ueber}, stating that the gamma function
is differentially transcendental over the field  $\CC(\nu)$ of rational functions over $\CC$, that is, $\Gamma(\nu)$ and all its derivatives in $\nu$ are algebraically independent over $\CC(\nu)$.

Notice that taking termwise logarithmic derivative of the multiplication formula
\begin{equation}\label{multiplication-formula}
\prod_{k=0}^{n-1}\Gamma\Big(\nu+\frac{k}{n}\Big)=(2\pi)^{\frac{n-1}{2}}n^{\frac{1}{2}-n\nu}\Gamma(n\nu)
\end{equation}
provides a differential relation over $K$ for $\Gamma\big(\nu+\frac{k}{n}\big)$, for $k=0,\ldots,n-1$ and $\Gamma(n\nu)$
but it does not prove algebraic dependence of these functions because of the factor $n^{\frac{1}{2}-n\nu}$, transcendental over $K$.

\smallskip
An original feature of our result is the functional transcendence over the field $K$ and we can allow $\zeta_1,\ldots,\zeta_n$ non-constant (so, unlike other Ax-Lindemann-Weierstrass statements, our base field is $k$, not $\CC$). To illustrate this choose $\zeta\in k$ and suppose that for all $0<r<1$, $\zeta$ is not $r$-periodic. In particular $\zeta$ is not a constant function.
Now consider an integer $n>0$ and, for $i=1,\ldots,n$, the meromorphic functions
$$\zeta_i(\nu)=\zeta\Big(\frac{\nu+i-1}{n}\Big),$$ that are algebraic over $k$. More precisely, for all $i$, $k[\zeta_i]$ equals the subfield
of meromorphic $n$-periodic functions over $\CC$ which is Galois cyclic of order $n$ over $k$, and the set of these functions is Galois invariant (see Lemma \ref{lemma-5}). The field $k[\zeta_i]$ is the splitting field of the polynomial
$$P_n(X)=\prod_{j=1}^{n}\big(X-\zeta_j\big)\in k[X].$$ One sees easily that $\zeta_1,\ldots,\zeta_n$ are distinct modulo $\ZZ$.
Indeed otherwise, we would have $\zeta'$ periodic with period $r$ for some $0<r=\frac{k}{n}<1$, $k\in\ZZ$ which easily contradicts
the fact that the minimal positive period of $\zeta$ is one (this last step also uses that $\zeta$ is meromorphic).

For example, in the case $\zeta(\nu)=\cos(2\pi\nu)$, we have
$$2^{n-1}P_n(X)=T_n-\zeta$$
where $T_n\in\ZZ[X]$ is the $n$-th Chebyshev polynomial of the first kind, characterized by
$T_n(\cos(2\pi\nu/n))=\cos(2\pi\nu)$.

Our Theorem \ref{thm:Holder-ALW-Intro}
implies

\begin{corintro}
Over a suitable non-empty open subset of $\CC$ where all the functions are well defined and meromorphic,
the $n$ functions $$\Gamma\Big(\nu-\zeta\big(\frac{\nu}{n}\big)\Big),\ldots,\Gamma\Big(\nu-\zeta\big(\frac{\nu+n-1}{n}\big)\Big)$$
are differentially independent over $K$.
\end{corintro}

One could ask to go a step further, and consider the problem of classifying differential relations among
functions of the type $\Gamma(\alpha\nu+\zeta)$ with $\alpha,\zeta$ algebraic over $k$ (Theorem \ref{thm:Holder-ALW-Intro}
only deals with the case $\alpha=1$). In this slightly generalized setting it is well known that non-trivial algebraic relations occur, that do not follow from
the standard functional equation $\Gamma(\nu+1)=\nu\Gamma(\nu)$.
The reflection formula
\begin{equation}\label{reflection-formula}
\Gamma\big(\nu\big)\Gamma\big(1-\nu\big)=\frac{\pi}{\sin(\pi\nu)}
\end{equation}
asserts that $\Gamma(\nu)\Gamma(-\nu)$ is algebraic over $k$, being a root of the irreducible polynomial
$$X^2-\frac{\pi^2}{\nu^2\sin(\pi\nu)^2}\in k[X].$$

As already noticed, for any $n\geq 2$, the multiplication formula
\eqref{multiplication-formula}
yields a non-trivial differential relation for the $n+1$ functions $\Gamma\big(\nu+\frac{k}{n}\big)$ for $k=0,\ldots,n-1$ and $\Gamma(n\nu)$.
The following question naturally arises:

\begin{questionintro}\label{question:intro}
Given functions $\xi_1,\ldots,\xi_n$ differentially algebraic over $K$,
determine elementary conditions
so that the functions
    $$
    \Gamma\big(\xi_1\big),\ldots,\Gamma\big(\xi_n\big),
    $$
seen as analytic functions over some non-empty open subset of $\CC$,
are differentially transcendental over $K$.
\end{questionintro}
Intuitively, Question~\ref{question:intro} connects to the possibility of a functional and differential variant of R\"ohrlich 1970 Conjecture (containing a conjectural classification of all the multiplicative relations as a consequence of the so-called Deligne-Koblitz-Ogus criterion) or of Lang-R\"ohrlich Conjecture
\cite[Appendix to \S 2, p. 66]{langCyclotomic} (see also
\cite{langRelationsDistributionsExemples1977} and \cite{GunMurtyRath2011}) where the role of the field $\QQ$ is here played by the field $K$.
One can guess if ``elementary condition'' may just signify avoiding relations arising from combinations of the relations \eqref{reflection-formula} and \eqref{multiplication-formula} in a way predicted by a possible functional analogue of Deligne-Koblitz-Ogus criterion or Deligne's reciprocity law as in \cite[Theorem 7.15]{DeligneMilneOgusShih1982}.
Without such a functional criterion it is legitimate to refrain from formulating a conjecture explicitly, but the Carlitz module involved in our work is the first sign that such structures may be realized.
\par
We also mention that refined analytic methods also allow to study algebraic relations among special values of the gamma function, with the base field $\CC$, see the work of Eterovi\'c and Padgett \cite{eterovicEquationsInvolvingGamma2025}, especially their Theorem 1.1. These results apply to special values of the functions $\Gamma(\alpha\nu+\zeta)$ with $\alpha\in\CC^\times,\zeta\in\CC$ (constant functions) but focus on questions which are different from those that are considered in this paper.

\subsection*{Background}
Problems in the wake of the Ax-Schanuel theorem have witnessed major advances and have important consequences, notably in the study of unlikely intersections and the Zilber–Pink conjecture. In this context, a strategy originally proposed by Zannier and later implemented by Pila and Zannier to provide an alternative proof of the Manin-Mumford conjecture in \cite{PilaZannier2008} led to the proof of several fundamental results in the area. See, for example, \cite{Pila22,Scanlon2022,Zannier12}, which provide useful background on these developments together with a broader bibliography than that included in the present paper. The strategy by Pila and Zannier combines point-counting results in $o$-minimal structures with functional transcendence theorems and other Diophantine ingredients. A major recent breakthrough in the area through this method is the proof of the general André–Oort conjecture \cite{PST}, using large-Galois orbit results obtained in \cite{BSY}, and functional transcendence results for Shimura varieties (see \cite{BT, GK, KUY, MPT, PT}) as one of the key ingredients.
The base field is usually $\mathbb{C}$, although other fields
are also considered, such as $\mathbb{C}_p$ (the completion of an algebraic
closure of $\mathbb{Q}_p$ for a prime number $p$).

We add that, in order to address these questions, one often either exploits the fact that $\Phi$ satisfies partial differential equations and applies differential Galois theory (see, for example, \cite{BlazquezSanz2021AxSchanuel,casale2020ax}), or adopts the framework of bi-algebraicity together with the strategy of “counting points in an $o$-minimal structure” via the Pila–Wilkie theorem \cite{PW}. The latter applies when the considered special function $\Phi$ exhibits suitable periodicity properties, as in the case of exponential functions; see \cite{MPT}. This is not easily applicable in our case, as $\Phi=\Gamma$ is not periodic, and more fundamentally, a certain restriction of $\Gamma$ is not definable in any $o$-minimal structure, see Padgett and Speissegger
\cite[Prop.~27]{PadgettSpeissegger2025}.

\subsection*{Approach}

To prove our Theorem \ref{thm:Holder-ALW-Intro} we introduce a completely different third approach by studying a variant of Carlitz module over meromorphic functions introduced in \cite{pellarinGeneralizedCarlitzModule2013}.
Let us consider the polynomial ring $A:=k[t]$, where $t$ is an indeterminate. There exists a unique
$k$-algebra homomorphism
$$A\xrightarrow{C}k[\nu][\tau]$$
defined by $t\mapsto \nu-\tau$ where the algebra structure of $k[\nu][\tau]$ is determined by $\tau x=x\tau $, for all $x\in k$, and $\tau \nu=(\nu+1)\tau$.

The morphism $C$ defines a functor $B\mapsto C(B)$ from the category of $K[\tau]$-modules to
the category of $A$-modules. For example, $C(\mathcal{M})$ is the $k$-vector space of meromorphic functions with the structure of Carlitz $A$-module described above.
If $a\in A$ we denote by $C_a\in k[\nu][\tau]$ the image of $a$ by $C$. In this way we have the collection of
pairwise commuting difference operators in $k[\nu][\tau]$.
Remarkably they all have a commutative difference Galois group, as a consequence of our results. For example we have:

\begin{corintro}\label{thmintro:Artinsymbol}
Assume $P\in A$ irreducible. The $\tau$-difference Galois group of the operator $C_P$ is canonically isomorphic to  the multiplicative group $(A/(P))^\times$.
\end{corintro}

A more precise and general result is proved in this paper, see our Theorem \ref{thm:Artinsymbol}.
The {\em direct problem in Galois theory of functional equations}, {that is the calculation
of the Galois group given the equation}, is considered to be difficult:
in the case of differential equations, Kolchin mentioned it as one of the most difficult problems in differential algebra
\cite{kolchinProblemsDifferentialAlgebra1968}. Significant progress has been made on this side, but it is quite rare to be able to
produce such a simple description.

The gamma function comes into the picture in the following way. When we consider an irreducible polynomial $P\in A=k[t]$ the $k$-vector space
$$\La_P=\{f\text{ meromorphic}:C_P(f)=0\}$$ has dimension $\deg_t(P)$ by a theorem of Praagman
\cite[Theorem 1]{praagman_fundamental_1986}. $\La_P$ is also an $A$ module via $C$. Proposition
\ref{prop:PowerOfIrriducibleTorsion} furnishes a procedure to compute a basis of $\La_{P^r}$,
for all $r\geq 1$, starting from a basis of $\La_P$. This result crucially uses the fact that the operator $D=\frac{d}{d\nu}$
commutes with $\tau$ and is a novel feature of Drinfeld modules over meromorphic functions that we use in our paper and does not exist in
classical function field arithmetic (so the theory we introduce in this paper can only partially be viewed as an analogue of the theory of Drinfeld modules in function field arithmetic).

Let $\zeta_1,\dots,\zeta_n$ be the roots of $P$ in an algebraic closure of $k$. We define the
following functions (see Lemmas \ref{lemma-y-solutions} and \ref{linear-independence}):
    \[
    y_r:=\sum_{i=1}^{n}\zeta_i(\nu)^r\Gamma\big(\nu-\zeta_i(\nu)\big),~\hbox{with $r=0,\dots,n-1$,}
    \]
meromorphic in a certain domain of $\C$.
A technical statement of our work is the partial determination of the domain of meromorphy:

\begin{lemintro}\label{lem:inversion}
There exists a discrete subset $S$ of $\CC$, invariant by translations by elements of $\ZZ$, such that, with $\Theta=\CC\setminus S$, $k_\Theta$ the field of $1$-periodic functions meromorphic over $\Theta$ and $\mathcal{M}_\Theta$ the field of meromorphic
functions over $\Theta$, $y_1,\ldots,y_n$ can be identified with meromorphic functions over $\mathcal{M}_\Theta$,
and are a basis of the $k_\Theta$-vector space $\{f\in\mathcal{M}_\Theta:C_P(f)=0\}$.
\end{lemintro}

The set $S$ is a subset of the set of poles of the coefficients of $P$, if we take $P$ monic. We cannot rule out the hypothesis that it is empty for certain $P$, or all, it is something we do not control, which is a puzzling problem from the analytic viewpoint.

The set $S$ is empty in the case $P=t$. Indeed in this case,
one sees easily that $C_t(\Gamma)=0$ so $\Theta=\CC$ and
$\La_t=k\Gamma$, so $\Gamma$ is a $t$-torsion point for Carlitz module over meromorphic functions.
Notice, for general $P$, that Praagman's Theorem \cite[Theorem~1]{praagman_fundamental_1986} computes the dimension over $k$ of $\La_P$ with a non-constructive proof. A more constructive process is given by Nörlund
\cite[end of \S5 and end of \S6]{norlundLexistenceSolutionsDune1914}
(see also \cite[Chapter 10, page 272]{noerlundVorlesungenUeberDifferenzenrechnung1924}): he constructs
a basis of solutions of linear difference equations that are meromorphic when the coefficients are rational but may have essential singularities with trivial monodromy,
if the coefficients are just meromorphic functions. Nor does Nörlund control the locus of essential singularities.

Lemma \ref{lem:inversion} plays the role of an analogue of Fourier inversion formula for Gauss sums, and the functions
$\Gamma(\nu-\zeta_i(\nu))$ of analogues of (suitably normalized) Gauss sums. This is a crucial remark in our work and connects to the papers \cite{carlitz1935polynomials,hayes1974explicit,Thakur1988,anglesUniversalGaussThakurSums2015} in the following way.
We recall that the usual Carlitz module is a special example of a Drinfeld module earlier introduced by Carlitz in \cite{carlitz1935polynomials}. The books \cite{goss2012basic,papikianDrinfeld2023} provide good accounts of the theory.
From Hayes' work \cite{hayes1974explicit} it emerges that the field extensions generated by torsion elements of Carlitz module are function field analogues of cyclotomic fields. In his paper, Hayes uses it to prove a function field variant of Kronecker-Weber's theorem.
A reformulation of Hayes' theorem was reached in \cite{anglesUniversalGaussThakurSums2015} through the special values of
{\em Anderson-Thakur omega function}. The key point is that the omega function interpolates at the infinity valuation
suitably normalized Gauss-Thakur sums, introduced in \cite{Thakur1988}. In \cite{pellarinGeneralizedCarlitzModule2013} it was noticed that this makes an analogy with the two variable function $\Gamma(\nu-t)$. Therefore in the present paper, we successfully
applied these remarks to concrete results that can be considered as a first instance of difference class field theory, where a function related to Euler's gamma function plays a crucial role, just as the Anderson-Thakur's omega function in function field arithmetic. As an example of an immediate corollary of our Theorem \ref{thm:TrdegPV}:

\begin{corintro}
The algebraic closure $k_\infty$ of $k$ in the field of meromorphic functions $\mathcal{M}$ and $$K(\La_\infty):=K\Big(\bigcup_{a\in k[t]}\La_a\Big)\subset\mathcal{M}$$ are linearly disjoint in $\mathcal{M}$.
\end{corintro}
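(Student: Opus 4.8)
The plan is to deduce linear disjointness from the purely transcendental nature of $K(\La_P)/K$ (Theorem~\ref{thm:TrdegPV}) together with the structure of $K(\La_\infty)$ as a compositum of such extensions. First I would recall that two field extensions $L_1,L_2$ of a common subfield $K$, both contained in a field $\cM$, are linearly disjoint over $K$ as soon as one of them, say $L_1$, is a purely transcendental (more precisely, a \emph{regular}) extension of $K$: a purely transcendental extension is regular, and a regular extension is linearly disjoint from \emph{every} extension of the base field inside a common overfield. Since $k_\infty$ is algebraic over $k$, hence over $K$, it would suffice to show that $K(\La_\infty)$ is a regular extension of $K$, i.e.\ that $K$ is algebraically closed in $K(\La_\infty)$ and that $K(\La_\infty)$ remains a field after base change to $\oK$ (here separability is automatic since we are in characteristic zero).

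The key reduction is that $K(\La_\infty) = \bigcup_{a\in A} K(\La_a)$ is a directed union, and each $\La_a$ is contained in $\La_{P_1^{r_1}}\cdots$ for the irreducible factors $P_i$ of $a$; moreover $K(\La_{P^r})$ is built from $K(\La_P)$ by adjoining derivatives of a basis of $\La_P$ (as explained in the introduction, differentiating the Praagman basis), so one expects $K(\La_{P^r})/K$ to be purely transcendental whenever $K(\La_P)/K$ is — and Theorem~\ref{thm:TrdegPV} should be understood to deliver exactly this. Then I would argue that a directed union of regular extensions of $K$ is regular over $K$: if $\alpha\in K(\La_\infty)$ is algebraic over $K$, it already lies in some $K(\La_a)$, which is regular over $K$, forcing $\alpha\in K$; and linear disjointness of $K(\La_\infty)$ from $\oK$ over $K$ follows because a finite $\oK$-linear dependence relation among elements of $K(\La_\infty)$ involves only finitely many of them, hence lives inside $\oK\otimes_K K(\La_a)$ for some $a$, which is a domain (indeed a field's localization) by regularity of $K(\La_a)/K$. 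Having established that $K(\La_\infty)/K$ is regular, linear disjointness from $k_\infty$ is immediate, since $k_\infty \subseteq \oK$ inside $\cM$.

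The main obstacle I anticipate is the passage from Theorem~\ref{thm:TrdegPV}, which as stated concerns a single irreducible $P$, to the full $K(\La_\infty)$: one must check that adjoining the $\La_{P^r}$ and then taking the compositum over all irreducibles does not create algebraic relations over $K$. The first point is the ``differentiating the basis'' mechanism for powers of $P$, which should be handled by the Picard--Vessiot formalism already in place; the second — that distinct irreducible $P$'s contribute independent transcendentals — ought to follow from the canonical identification of the Galois group of $K(\La_P)/K$ with $(A/(P))^\times$ in Theorem~\ref{thm:Artinsymbol}, since coprime moduli give a direct product of Galois groups and hence a linearly disjoint compositum. Once these bookkeeping steps are in place, the regularity of $K(\La_\infty)/K$, and therefore the stated linear disjointness, is formal.
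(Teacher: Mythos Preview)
Your approach is correct and is precisely the natural way to unpack the paper's one-line claim that the corollary is an ``immediate consequence'' of Theorem~\ref{thm:TrdegPV}: each $K(\La_a)/K$ is purely transcendental, hence regular; a directed union of regular extensions of $K$ is regular (any element algebraic over $K$ already lies in some $K(\La_a)$, hence in $K$); and a regular extension of $K$ is linearly disjoint from every algebraic extension of $K$, in particular from $k_\infty K$, whence the tower property (with $K/k$ purely transcendental and $k_\infty/k$ algebraic) gives linear disjointness of $K(\La_\infty)$ and $k_\infty$ over $k$.

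The only flaw is your reading of Theorem~\ref{thm:TrdegPV}. It is \emph{not} stated only for a single irreducible $P$: it asserts that $K(\La_a)$ is purely transcendental over $K$ of transcendence degree $\deg_t(\hat a)$ for \emph{any} $a\in A$, where $\hat a$ is the normalized representative defined in \eqref{eq:defHatA}. The passage to powers $P^r$ (via Proposition~\ref{prop:PowerOfIrriducibleTorsion}) and to products of irreducibles with distinct $\tau$-orbits is already built into the proof of Theorem~\ref{thm:TrdegPV} through Proposition~\ref{prop:pinfty}. So your entire ``main obstacle'' paragraph, and the appeal to Theorem~\ref{thm:Artinsymbol} to control compositums, is unnecessary: the bookkeeping you worry about is exactly what the $\hat a$ formalism and Proposition~\ref{prop:pinfty} handle. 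Once you read Theorem~\ref{thm:TrdegPV} correctly, the corollary really is immediate along the lines of your first two paragraphs.
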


A difference variant of class field theory for the field $K$ is at the time being not fully available but in several aspects our work highlights structures that
point in that direction. This is natural and based on the idea, already suggested by Drinfeld, Krichever, Mumford \cite{kricheverAlgebraicCurvesCommuting1976,kricheverMethodsAlgebraicGeometry1977,mumfordAlgebrogeometricConstructionCommuting1978}, that the torsion of rank one Drinfeld modules (or Krichever modules) can be used to classify abelian linear difference (or differential) equations.

We end the introduction by observing that a complete description of the maximal abelian difference extension of $\CC(\nu)$ was carried out by the authors of
\cite{putKricheverModulesDifference2004}
but the fact that they use the field $\CC$ in place of our $k$ makes it unlikely that their methods can reach the general questions we are evoking in our
paper and certainly forbids deducing our main theorems from their results in an easy way.

\subsection*{Links with function field arithmetic}
We point out that the introduction of ``Carlitz module over meromorphic functions'' in Ax-Lindemann-Weierstrass-type results for $\Gamma$ is not a mere translation of Carlitz-Hayes theory and there are major difficulties and subtleties we had to overcome to reach our results. In particular: (1) differential structures appear, which are compatible with the difference structures of Carlitz module, and do not exists in Carlitz-Hayes theory (\S \ref{section-carlitz}). (2) It is non-trivial to connect $\Gamma$ with the $P$-torsion $\La_P$ with $P\in A$  of degree $>0$ (\S \ref{ALW-gamma}), for instance, in the classical Carlitz-Hayes theory, torsion modules are easily described through cyclotomic values of Carlitz's exponential (see Remark \ref{rmk:descriptionofthetorsion}) but there is no such a structure here; we rather need a variant of Fourier inversion theorem for Gauss sums, see Lemma \ref{lem:inversion} or \ref{lemma-y-solutions}. (3) An important tool of our work is Praagman's \cite[Theorem 2]{praagman_fundamental_1986}, rooted on the fact every holomorphic line bundle over $\CC^\times$ is trivial, which uses Runge's approximation theorem, not constructive. This yields delicate analytic problems related to the
functions $y_r$, see Lemma \ref{lemma-tau-S}. (4) Although the proof of our main result does not use the structure of difference Galois groups, checking the setup of the criterion of Hardouin and Singer
Proposition \ref{prop:HS} is subtle (unlike the simpler theorem of Hölder) and passes through a study of an auxiliary ring $F$ borrowing technical tools from Ovchinnikov and Wibmer's \cite{ovchinnikovGaloisTheoryLinear2014}, see for example Lemma \ref{lemma-subtle} and the proof of Proposition~\ref{prop:pinfty}.

Our results look optimal with the methods employed, notably (the avatar over meromorphic functions of) Carlitz's module. In particular, to proceed further and approach an answer to Question \ref{question:intro}, general ``motive'' structures are needed. In connection with this we recall
that in \cite{andersonDeterminationAlgebraicRelations2004}, Anderson, Brownawell and Papanikolas succeeded in demonstrating
a function field analogue of the ``rational case'' of Lang-R\"ohrlich conjecture for the {\em geometric gamma function} of Thakur, see their Theorem 6.2.1. In their proof they introduced and used several tools, including novel linear independence criteria, and a class of {\em dual $t$-motives} of ``CM type'' allowing to algebraically realize a converse of their diamond bracket criterion. Although we cannot consider
the geometric gamma function above as a complete analogue of Euler's gamma function, we can hope that some notion of
``dual $t$-motives over meromorphic functions'',
more general than our Carlitz module, can be used to progress in the direction of a complete answer to
Question \ref{question:intro}.

\subsection*{\sc Acknowledgements.}
We both thank the Vietnam Institut for Advanced Studies in Mathematics for the invitation in 2018, which was a turning point in
our collaboration, and the Fondation Mathématique Jacques Hadamard that have supported a visit of the second author to the
Université de Versailles Saint-Quentin-en-Yvelines during Spring 2025. We would like to thank Alin Bostan for his interest in our
work, and Michael Singer for appropriate questions and insightful discussions, as well as for a careful reading that allowed us to
improve the quality of our presentation.
We are thankful to Laura Capuano and Marc-Hubert Nicole for useful comments and suggestions.

\section{Difference algebra background}
\label{sec:background}

In the rest of this paper all fields are of characteristic zero.
A {\em difference field} (or a {\em $\tau$-field}) is a pair $(K,\tau)$, where $K$ is a field and $\tau:K\to K$ is an endomorphism.
We say that $(K,\tau)$ is {\em inversive} if $\tau$ is an automorphism.
We denote by $$k:=K^\tau:=\{x\in K:\tau(x)=x\}$$ the subfield of $\tau$-invariant elements of $K$,
also called $\tau$-constants.
Similarly, we call $\tau$-ring a ring equipped with an endomorphism $\tau$ and $\tau$-ideal an ideal of a $\tau$-ring which stable by $\tau$.
A $\tau$-ring is said to be $\tau$-simple if its only $\tau$-ideals are $(0)$ and the whole ring.
\par
Let $x_1,\ldots,x_s$ be elements of $K$. Their {\em $\tau$-casoratian} is the matrix:
    \begin{equation}\label{eq:tauwronskian}
        \hbox{$\Cas_\tau$}(x_1,\ldots,x_s)=
        \left(\begin{array}{cccc}
        x_1 & x_2  & \cdots &  x_s \\
        \tau (x_1) & \tau (x_2) & \cdots & \tau (x_s)\\
        \vdots & \vdots & & \vdots \\
        \tau^{s-1}(x_1)& \tau^{s-1}(x_2) & \cdots & \tau^{s-1}(x_s)
        \end{array}\right).
    \end{equation}
It is an analog of the wronskian matrix and has similar properties:

\begin{lemma}[{\cite[\S 7]{casorati1880calcolo},\cite[Lemma 8.2.1]{levinDifferenceAlgebra2008}}]
\label{lemma:tauwronskian}
    The elements $x_1,\ldots,x_s\in K$ are $k$-linearly independent if and only if
    the matrix $\Cas_\tau(x_1,\ldots,x_s)$ has maximal rank.
\end{lemma}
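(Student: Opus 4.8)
The plan is to prove the two implications separately. One direction is immediate: if $x_1,\dots,x_s$ satisfy a nontrivial relation $\sum_{i=1}^{s}c_ix_i=0$ with $c_i\in k=K^\tau$, then applying $\tau^j$ and using $\tau(c_i)=c_i$ gives $\sum_{i=1}^{s}c_i\tau^j(x_i)=0$ for all $j\ge 0$; taking $j=0,\dots,s-1$ exhibits a nontrivial $K$-linear dependence among the columns of $\Cas_\tau(x_1,\dots,x_s)$, so the matrix has rank $<s$. Thus it suffices to show that $k$-linear independence of $x_1,\dots,x_s$ forces $\Cas_\tau(x_1,\dots,x_s)$ to be invertible.

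I would do this by induction on $s$, the case $s=1$ being the statement that $x_1\ne 0$. For the inductive step, note first that a field endomorphism is injective, so $\tau$ is injective, and therefore both $x_1,\dots,x_{s-1}$ and $\tau(x_1),\dots,\tau(x_{s-1})$ are again $k$-linearly independent; by the inductive hypothesis the $(s-1)\times(s-1)$ matrices $\Cas_\tau(x_1,\dots,x_{s-1})$ and $\Cas_\tau(\tau(x_1),\dots,\tau(x_{s-1}))$ are both invertible. Suppose, for contradiction, that $\Cas_\tau(x_1,\dots,x_s)$ is singular, so that there exist $a_1,\dots,a_s\in K$, not all zero, with $\sum_{j=1}^{s}a_j\tau^i(x_j)=0$ for $i=0,\dots,s-1$. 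Invertibility of $\Cas_\tau(x_1,\dots,x_{s-1})$ rules out $a_s=0$ (the equations with $i=0,\dots,s-2$ would otherwise make its columns $K$-linearly dependent), so after rescaling we may assume $a_s=1$.

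Now apply $\tau$ to the equations for $i=0,\dots,s-2$ and subtract them from the original equations for $i=1,\dots,s-1$: the $\tau^i(x_s)$-terms cancel and one is left with $\sum_{j=1}^{s-1}\big(a_j-\tau(a_j)\big)\tau^i(x_j)=0$ for $i=1,\dots,s-1$. The coefficient matrix of this system is exactly $\Cas_\tau(\tau(x_1),\dots,\tau(x_{s-1}))$, which is invertible, so $a_j=\tau(a_j)$, i.e.\ $a_j\in k$, for $j=1,\dots,s-1$. Feeding this back into the equation with $i=0$ yields the nontrivial $k$-linear relation $\sum_{j=1}^{s-1}a_jx_j+x_s=0$, contradicting the $k$-linear independence of $x_1,\dots,x_s$.

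The argument is just the difference-algebra transcription of the classical Wronskian criterion, and the only point requiring care is the bookkeeping in the inductive step: one must check that, after applying $\tau$ and subtracting, one lands precisely on the $(s-1)\times(s-1)$ minor governed by the inductive hypothesis and that the coefficient of $x_s$ is preserved, so that the resulting relation is genuinely nontrivial. No serious obstacle is expected; alternatively one could simply invoke \cite[Lemma~8.2.1]{levinDifferenceAlgebra2008}, but spelling out the short proof keeps the discussion self-contained.
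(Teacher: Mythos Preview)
Your argument is correct and is the standard induction proof of the Casorati/Wronskian criterion. The paper itself does not supply a proof of this lemma: it simply cites \cite[\S 7]{casorati1880calcolo} and \cite[Lemma~8.2.1]{levinDifferenceAlgebra2008}, so your write-up actually goes beyond what the paper provides and makes the exposition self-contained.
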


For any difference field $(K,\tau)$ as above, we consider the non-commutative $K$-algebra $K[\tau]$
of finite linear combinations of powers of $\tau$ with coefficients in $K$ of the form
$f=\sum_{i\geq 0}f_i\tau^i$, with the obvious addition and the multiplication defined by the rule $\tau f=\tau(f)\tau$, for any $f\in K$.
The algebra $K[\tau]$ has a right division algorithm, hence any left ideal of $K[\tau]$ is principal. If $(K,\tau)$ is inversive,
then there also is a left division algorithm, hence in this case right ideals of $K[\tau]$ are principal as well.

\par
We consider a $\tau$-extension $(F,\tau)$ of $(K,\tau)$, i.e., a field extension $F/K$, with an extension of the endomorphism $\tau$ to $F$.
If $\mathcal{P}=\sum_{i=0}^np_i\tau^i\in K[\tau]$ and $c$ is an element of $F$, the {\em evaluation of $\mathcal{P}$ at $c$} defines a $k$-linear endomorphism of $F$
that we also denote by $\mathcal{P}$:
\begin{equation}\label{prototypeoff}
\mathcal{P}:c\mapsto \mathcal{P}(c):=\sum_{i=0}^np_i\tau^i(c)\in F.
\end{equation}
The kernel $\ker_k(\mathcal{P})$ is thus a sub-$k$-vector space of $F$, and, as a consequence of Lemma~\ref{lemma:tauwronskian}, we have $\dim_k\ker_k(\mathcal{P})\leq n$.
Let $y$ be an indeterminate. We call
\begin{equation}\label{eq:eq}
\mathcal{P}(y):=p_0y+p_1\tau(y)+\cdots+p_n\tau^n(y)=0
\end{equation}
the {\em linear $\tau$-difference equation} or {\em $\tau$-equation in the indeterminate $y$} associated to $\mathcal{P}$.
\par

\begin{Definition}\label{definition-difference-closed}{\em
The difference field $(F,\tau)$ is {\em linearly $\tau$-closed} for (or simply {\em linearly closed}) if for all $\mathcal{P}\in F[\tau]$
and all $f\in F$ the associated
non-trivial inhomogeneous linear $\tau$-difference equation $\mathcal{P}(y)=f$ has at least one non-zero solution in $F$.}
\end{Definition}

\begin{rmk}
We do not suppose that a linearly $\tau$-closed field is also algebraically closed.
\end{rmk}

The following two lemmas are well-known and elementary. We recall them for the reader's convenience:

\begin{lemma}
If $(F,\tau)$ is linearly closed then it is inversive.
\end{lemma}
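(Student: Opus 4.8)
The claim to prove is: \emph{if $(F,\tau)$ is linearly closed then it is inversive}, i.e.\ $\tau\colon F\to F$ is surjective (it is already injective, being a field endomorphism).

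\medskip

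The plan is to feed the defining property of linear closedness the simplest possible inhomogeneous equation. Fix an arbitrary $f\in F$; we must produce a preimage under $\tau$. Consider the order-one operator $\mathcal{P}=\tau-1\in F[\tau]$ (in the notation of \eqref{eq:eq}, $p_0=-1$, $p_1=1$), which is non-trivial, and look at the inhomogeneous equation $\mathcal{P}(y)=f$, that is
\[
\tau(y)-y=f.
\]
Hmm — but this gives $\tau(y)=y+f$, not $\tau(y)=f$, so this particular choice solves a telescoping problem rather than the surjectivity problem directly. The cleaner move is to instead use $\mathcal{P}=\tau$ itself: the equation $\mathcal{P}(y)=f$ reads $\tau(y)=f$, and Definition~\ref{definition-difference-closed} guarantees a solution $y\in F$ (indeed a non-zero one when $f\neq 0$; and for $f=0$ take $y=0$). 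That solution is exactly a $\tau$-preimage of $f$. Since $f$ was arbitrary, $\tau$ is surjective, hence an automorphism, hence $(F,\tau)$ is inversive.

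\medskip

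One should double-check that $\mathcal{P}=\tau$ is a legitimate choice in Definition~\ref{definition-difference-closed}: the definition asks for $\mathcal{P}\in F[\tau]$ and $f\in F$ with the associated inhomogeneous equation ``non-trivial''; here $\mathcal{P}=\tau$ has $p_0=0$, $p_1=1\neq 0$, so it is a genuine (non-zero) operator, and $\mathcal{P}(y)=f$ is a non-trivial equation. If one prefers to avoid any ambiguity about whether $p_0=0$ is permitted, one can argue in two steps: first, linear closedness applied to $\mathcal{P}=\tau-1$ and an arbitrary $g\in F$ produces $z$ with $\tau(z)=z+g$; taking $g=f-z$... — this is circular, so the direct argument with $\mathcal{P}=\tau$ is the right one, and the definition as stated does allow it. Alternatively, and perhaps most robustly: given $f\in F$, apply linear closedness to the operator $\mathcal{P}=f\cdot\tau - 1$ when $f\neq 0$; then $\mathcal{P}(y)=0$ already has the nonzero solution guaranteed (homogeneous case), but to stay inside the inhomogeneous framework use $\mathcal{P}=\tau$, $f$ as given.

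\medskip

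There is essentially no obstacle here: the statement is a one-line consequence of unwinding the definition. The only thing requiring (minimal) care is the bookkeeping of which coefficient of $\mathcal{P}$ is allowed to vanish, and handling the trivial case $f=0$ separately (where $y=0$ works and no appeal to the hypothesis is needed). I would therefore write the proof as: let $f\in F$; if $f=0$ then $0$ is a $\tau$-preimage; if $f\neq 0$, apply Definition~\ref{definition-difference-closed} to $\mathcal{P}=\tau\in F[\tau]$ to obtain $y\in F$ with $\tau(y)=f$. Thus $\tau$ is surjective; being an injective field endomorphism as well, it is an automorphism, so $(F,\tau)$ is inversive. $\qed$
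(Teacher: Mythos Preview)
Your argument is correct: applying Definition~\ref{definition-difference-closed} directly to $\mathcal{P}=\tau$ and a given $f\neq 0$ yields a nonzero $y$ with $\tau(y)=f$, which is exactly a $\tau$-preimage; the case $f=0$ is trivial. (The exposition would benefit from pruning the false starts, but the final argument is sound.)

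The paper takes a less direct route. Given $f\neq 0$, it first observes that the one-dimensional kernel of $f\tau-\tau(f)$ is $kf$, then applies the linear-closedness hypothesis to the \emph{homogeneous} second-order equation $f\tau^2(y)-\tau(f)\tau(y)=0$ to obtain a nonzero solution $g$; since $\tau(g)$ then lies in $kf$, a scalar multiple of $g$ is a preimage of $f$. Your approach is shorter and uses only an order-one operator. The paper's detour through a homogeneous equation of order two is perhaps motivated by the viewpoint (made explicit in the very next lemma) that linear closedness is really a statement about full solution spaces of homogeneous equations; but since the definition as stated explicitly allows inhomogeneous right-hand sides, your shortcut is entirely legitimate and arguably preferable.
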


\begin{proof}
We need to show that, for any non-zero $f\in F$,
the equation $\tau(y)=f$ has a solution in $F$.
Any solution of $\tau(y)=f$ is also a solution of the linear $\tau$-difference equation:
\begin{equation}\label{eq:inversive}
f\tau(y)-\tau(f)y=0
\end{equation}
and the vector space of solutions of the latter coincides with $kf=\{cf:c\in k\}$.
By assumption the equation \eqref{eq:inversive} has a non-zero solution $g\in F$. It follows that $\tau(g)$ is solution of $f\tau(y)-\tau(f)y=0$
and hence that there exists
$c\in k$ such that $\tau(g)=cf$. Since $\tau$ is an endomorphism and $g\neq 0$,
we have that $c\neq 0$ and hence that $\tau(c^{-1}g)=f$, which ends the proof.
\end{proof}

\begin{lemma}
The following assertions are equivalent:
\begin{enumerate}
  \item $(F,\tau)$ is linearly closed.
  \item Any difference equation of the form $\mathcal{P}(y)=0$ as in \eqref{eq:eq},
   with $p_0p_n\neq 0$, has a set of $n$ $k$-linearly independent solutions in $F$, that is, $\dim_k(\ker_k(\mathcal{P}))=n$.
\end{enumerate}
\end{lemma}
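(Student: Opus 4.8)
The plan is to prove the two implications separately: $(2)\Rightarrow(1)$ by homogenising the equation and counting dimensions, and $(1)\Rightarrow(2)$ by a descending induction on the order. For $(2)\Rightarrow(1)$, let $\mathcal{P}=\sum_{i=0}^{n}p_i\tau^i\neq 0$ and $f\in F$ with $f\neq 0$; after splitting off a power of $\tau$ on the right one reduces to the case $p_0p_n\neq 0$. I would form $\mathcal{Q}:=(f\tau-\tau(f))\circ\mathcal{P}\in F[\tau]$ and use two facts. First, $f$ spans the solution space of $f\tau(w)-\tau(f)w=0$ (one-dimensional by Lemma~\ref{lemma:tauwronskian}, with $f$ itself a solution), so every solution of $\mathcal{P}(y)=f$ solves $\mathcal{Q}(y)=0$. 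Second, $\mathcal{Q}$ has order $n+1$, constant coefficient $-\tau(f)p_0\neq 0$, and leading coefficient $f\tau(p_n)\neq 0$, so by (2) one has $\dim_k\ker_k(\mathcal{Q})=n+1$; since $\ker_k(\mathcal{P})\subseteq\ker_k(\mathcal{Q})$ and $\dim_k\ker_k(\mathcal{P})\leq n$ by Lemma~\ref{lemma:tauwronskian}, there is $y_0\in\ker_k(\mathcal{Q})$ with $\mathcal{P}(y_0)\neq 0$. Then $(f\tau-\tau(f))(\mathcal{P}(y_0))=\mathcal{Q}(y_0)=0$ forces $\mathcal{P}(y_0)=cf$ for some $c\in k$, necessarily $c\neq 0$, and $c^{-1}y_0$ is the desired non-zero solution of $\mathcal{P}(y)=f$.

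For $(1)\Rightarrow(2)$, take $\mathcal{P}=\sum_{i=0}^{n}p_i\tau^i$ with $p_0p_n\neq 0$, monic after dividing by $p_n$; since $\dim_k\ker_k(\mathcal{P})\leq n$ is already known from Lemma~\ref{lemma:tauwronskian}, only the reverse inequality needs proof, which I would obtain by induction on $n$ (trivial for $n=0$). For $n\geq 1$, pick a non-zero $y_1\in\ker_k(\mathcal{P})$ provided by linear closedness, put $\mathcal{L}_1:=\tau-\tau(y_1)y_1^{-1}$, and note that $\mathcal{L}_1(y_1)=0$, so right-division of $\mathcal{P}$ by $\mathcal{L}_1$ in $F[\tau]$ has zero remainder, say $\mathcal{P}=\mathcal{M}\mathcal{L}_1$ with $\mathcal{M}$ monic of order $n-1$; comparing constant coefficients (and using $p_0\neq 0$, $\tau(y_1)\neq 0$) shows $\mathcal{M}$ has non-zero constant coefficient, so the inductive hypothesis gives $\dim_k\ker_k(\mathcal{M})=n-1$. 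Finally the $k$-linear map $\mathcal{L}_1\colon\ker_k(\mathcal{P})\to\ker_k(\mathcal{M})$ is well defined, has kernel $\ker_k(\mathcal{L}_1)=ky_1$ (one-dimensional, and contained in $\ker_k(\mathcal{P})$), and is surjective, because for $0\neq v\in\ker_k(\mathcal{M})$ linear closedness solves the first-order inhomogeneous equation $\mathcal{L}_1(y)=v$ by some $y\neq 0$, which then lies in $\ker_k(\mathcal{P})$; hence $\dim_k\ker_k(\mathcal{P})=1+(n-1)=n$.

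The step I expect to be the real obstacle is the one I glossed in the induction: producing a single non-zero solution of the homogeneous equation $\mathcal{P}(y)=0$ (with $p_0\neq 0$) out of linear closedness, since the definition only directly grants solutions of inhomogeneous equations. I would resolve it exactly as in the proof that linear closedness implies inversiveness — compose $\mathcal{P}$ with a suitable first-order operator so that the resulting equation is of the type covered by the definition, take the solution it provides, and subtract an explicit particular solution, invoking once more that a first-order homogeneous equation has a one-dimensional solution space (Lemma~\ref{lemma:tauwronskian}). Everything else — the descending induction, the effect of composition and right-division on constant and leading coefficients in $F[\tau]$, and the two appeals to Lemma~\ref{lemma:tauwronskian} — is routine bookkeeping.
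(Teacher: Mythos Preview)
Your argument is correct and matches the paper's: for $(2)\Rightarrow(1)$ both compose with $f\tau-\tau(f)$ and count dimensions; for $(1)\Rightarrow(2)$ both factor off a first-order right factor built from one solution, apply induction to the cofactor, and lift the remaining solutions through the first-order inhomogeneous equation (you phrase this via rank--nullity for $\mathcal{L}_1:\ker_k(\mathcal{P})\to\ker_k(\mathcal{M})$, the paper by explicitly checking that $g,g_1,\dots,g_{n-1}$ are $k$-independent---same content). The obstacle you flag is not one: Definition~\ref{definition-difference-closed} is intended to cover $f=0$ as well (the paper already applies it to the homogeneous equation $f\tau^2(y)-\tau(f)\tau(y)=0$ in the proof of the preceding lemma), so the paper simply takes a non-zero $g\in\ker_k(\mathcal{P})$ without further ado.
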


\begin{proof}
Let us prove that (1) implies (2).
The implication is true for $n=1$ by definition. Let us prove the statement by induction on $n>1$.
If $g\in F$ is a non-zero solution of $\mathcal{P}(y)=0$,
then $\mathcal{P}=\tilde{\mathcal{P}}(g\tau-\tau(g))$, for some $\tilde{\mathcal{P}}=\sum_{i=0}^{n-1}\tilde p_i\tau^i\in F[\tau]$, with $\tilde p_0\tilde p_{n-1}\neq 0$.
By induction, $\tilde{\mathcal{P}}(y)=0$ has $n-1$ solutions $f_1,\dots,f_{n-1}$ which are linearly independent over $k$. There exist $g_1,\dots,g_{n-1}\in F$ such that $g\tau(g_i)-\tau(g)g_i=f_i$. We need to prove that $g,g_1,\dots,g_{n-1}$ are linearly independent over $k$. By contradiction, let us suppose that there exist $\lambda,\lambda_1,\dots,\lambda_{n-1}\in k$ not all zero such that $\lambda g+\sum_{i=1}^{n-1}\lambda_ig_i=0$.
Applying the operator $g\tau-\tau(g)$,
we obtain a non trivial linear dependence relation of the $f_i$ and
we conclude that all the $\lambda_i$ must be zero. But then also $\lambda=0$, and we have found a contradiction.
\par
Let us now assume (2). The $k$-vector space of solutions of the homogeneous difference equation
$(f\tau-\tau(f))\circ\mathcal{P}(y)=0$ has dimension $n+1$ and therefore there exists $g\in F$
solution of $(f\tau-\tau(f))\circ\mathcal{P}(y)=0$ but not solution of $\mathcal{P}(y)=0$.
Then  $\mathcal{P}(g)$ is a non zero element of $F$ such that $f\tau(g)-\tau(f)g=0$.
There exists $c\in k$ such that $\mathcal{P}(g)=c f$, and hence $\mathcal{P}(c^{-1}g)=f$.
This proves the lemma.
\end{proof}

Assume that $(F,\tau)$ is linearly $\tau$-closed with $F$ a $\tau$-field extension of $K$, and that
$F^\tau=K^\tau=k$.
Let $\mathcal{P}=\sum_{i=0}^np_i\tau^i\in K[\tau]$, with $p_0p_n\neq0$. Then there exists a $k$-basis $y_1,\dots,y_n$
of $$\La_\mathcal{P}:=\ker_k(\mathcal{P})\subset F.$$
By Lemma~\ref{lemma:tauwronskian}
the matrix $\Cas_\tau(y_1,\dots,y_n)$ is invertible with coefficients in  the smallest $\tau$-field contained in $F$ and containing
$K(\La_\mathcal{P})$, which has the form
$K(\La_\mathcal{P},\tau(\La_\mathcal{P}),\dots,\tau^{n-1}(\La_\mathcal{P}))$.
The columns of $\Cas_\tau(y_1,\dots,y_n)$
are solutions of the $\tau$-linear difference system:
\begin{equation}\label{eq:sys}
\tau (y)=M_\mathcal{P}y,
\hbox{~where
$M_\mathcal{P}=\begin{pmatrix}
  0& 1& & \\
  \vdots & & \ddots& \\
  0& & & 1 \\
  -\frac{p_0}{p_n} & -\frac{p_1}{p_n} &\cdots & -\frac{p_{n-1}}{p_n}
\end{pmatrix}\in\GL_n(K)$,}
\end{equation}
where the indeterminate function $y$ is a column vector to which $\tau$ is applied componentwise.
Therefore $\Cas_\tau(y_1,\dots,y_n)$ is an invertible matrix solution of $\tau(Y)=M_PY$, where $Y$ is an indeterminate square matrix of order $n$.
Taking the determinant on both side, we obtain that $$\delta:=\det(\operatorname{Cas}_\tau(y_1,\dots,y_n))$$ satisfies $$\tau(\delta)=(-1)^n\frac{p_0}{p_n}\delta.$$
Notice that $\delta\in\cM$ and depends on $y_1,\dots,y_n$ only up to a multiplicative constant in $k$.
\medskip

To conclude this presentation of introductory material in difference algebra, we shortly discuss the subfield of $\tau$-periodic elements of $F$.
For any positive integer $n$, the $n$-dimensional $k$-vector space
    $$
    k_n:=\ker_k(1-\tau^n)\subset F
    $$
is a subfield of $F$. In particular, $k=k_1$.

\begin{lemma}\label{lemma-5}
For any $n>0$, the extension $k_n/k$ if finite of degree $n$, Galois, with cyclic Galois group which acts via iterations of $\tau$.
\end{lemma}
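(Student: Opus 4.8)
The plan is to present $k_n$ as the fixed field of the finite cyclic group of automorphisms generated by the restriction of $\tau$, and then to pin down the order of that group using the dimension count provided by linear $\tau$-closedness. The only input that genuinely uses the hypothesis is the equality $[k_n:k]=n$; everything else is formal.

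First I would check that $k_n$ is a field and that $\tau$ acts on it. Since $(F,\tau)$ is linearly $\tau$-closed, the previous lemma shows that $\tau$ is an automorphism of $F$, hence so is $\tau^n$, and $k_n=F^{\tau^n}$ is its fixed field; clearly $k=F^\tau\subseteq k_n$. Moreover $k_n$ is $\tau$-stable: if $\tau^n(x)=x$ then $\tau^n(\tau(x))=\tau(\tau^n(x))=\tau(x)$. Thus $\sigma:=\tau|_{k_n}$ is a field endomorphism of $k_n$; it is injective because $\tau$ is, and surjective because $\sigma\circ(\tau^{n-1}|_{k_n})=\tau^n|_{k_n}=\mathrm{id}_{k_n}$ (using that $k_n$ is stable under $\tau^{n-1}$). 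Hence $\langle\sigma\rangle$ is a finite cyclic group of automorphisms of $k_n$ whose order divides $n$ (as $\sigma^n=\mathrm{id}_{k_n}$), and its fixed field is $k_n^{\sigma}=k_n\cap F^\tau=k$.

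Now I would invoke Artin's theorem on finite groups of field automorphisms applied to $\langle\sigma\rangle$ acting on $k_n$: it yields that $k_n/k$ is a finite Galois extension with $\mathrm{Gal}(k_n/k)=\langle\sigma\rangle$ and $[k_n:k]=\lvert\langle\sigma\rangle\rvert=\mathrm{ord}(\sigma)$, so a priori $[k_n:k]$ merely divides $n$. To upgrade this to an equality I would use linear $\tau$-closedness: applying to the operator $\mathcal P=1-\tau^n\in F[\tau]$, whose extreme coefficients $p_0=1$ and $p_n=-1$ are nonzero, the equivalence recorded above gives $\dim_k\ker_k(1-\tau^n)=n$, that is $[k_n:k]=\dim_k k_n=n$ (this is exactly why $k_n$ was introduced above as an $n$-dimensional $k$-vector space). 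Comparing the two expressions for $[k_n:k]$ forces $\mathrm{ord}(\sigma)=n$, whence $\mathrm{Gal}(k_n/k)=\langle\sigma\rangle\cong\mathbb Z/n\mathbb Z$ is cyclic of order $n$ and acts through iterations of $\tau$, as claimed.

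I expect no serious obstacle here: the argument is essentially a packaging of Artin's lemma. The one substantive point — and the place where the hypothesis is indispensable — is the lower bound $[k_n:k]\geq n$, i.e. the existence of a full $k$-basis of solutions of $\tau^n(y)=y$ inside $F$; linear $\tau$-closedness is precisely what removes this obstruction (and, implicitly, it is also what guarantees that $\tau$ is invertible, so that $k_n$ is a genuine subfield rather than just a $k$-subspace).
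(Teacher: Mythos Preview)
Your proof is correct, and it takes a genuinely different route from the paper's. The paper argues by constructing a primitive element: it picks $f\in k_n\setminus\bigcup_{i<n}k_i$ (nonempty because $\dim_k k_n=n$ while the union of the $k_i$ for $i<n$ is a finite union of proper subspaces over an infinite field), forms $P=\prod_{i=0}^{n-1}(X-\tau^i(f))\in k[X]$, and checks that $P$ is the minimal polynomial of $f$ over $k$, so that $k(f)/k$ is cyclic Galois of degree $n$; the dimension count then gives $k_n=k(f)$. You instead recognise $k_n=F^{\tau^n}$ as the fixed field of the finite cyclic group $\langle\tau|_{k_n}\rangle$ and invoke Artin's theorem directly, using the same dimension count only at the end to force the order to equal $n$. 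Your argument is slicker and avoids the primitive-element construction; the paper's has the minor advantage of exhibiting an explicit generator of $k_n/k$ together with its minimal polynomial.
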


\begin{proof}
We have that $k_n\setminus \cup_{i<n}k_{i}\neq \emptyset$ because $F$ is linearly closed. It is separable.
If $f\in k_n$ then the polynomial
    $$
    P=\prod_{i=0}^{n-1}(X-\tau^i(f))
    $$
vanishes at $f$. Notice that $P\in k[X]$, since its coefficients are symmetric polynomials in $f,\dots,\tau^{n-1}(f)$.
If $f\in k_n\setminus\cup_{i<n}k_{i}$, $P$ is the minimal polynomial of $f$ and the extension $k(f)/k$ is
Galois of degree $n$, with cyclic Galois group generated by $\tau$, and the action is given as described above.
We conclude by noticing that $k_n$ being the set of solution of $\tau^n(y)-y=0$, it is a $k$-vector space of dimension $n$, hence $k[f]=k_n$.
\end{proof}

\begin{cor}\label{cor:kinfty}
The field $k_\infty:=\bigcup_{n>0}k_n\subset F$ is relatively algebraically closed in $F$.
\end{cor}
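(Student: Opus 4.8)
The plan is to prove the (formally stronger) statement that $k_\infty$ is \emph{exactly} the relative algebraic closure of $k$ in $F$; the corollary then follows by transitivity of algebraicity, since an element of $F$ algebraic over $k_\infty$ is algebraic over $k$ (as $k_\infty/k$ is algebraic), hence will lie in $k_\infty$. First I would record that $k_\infty=\bigcup_{n>0}k_n$ is a field algebraic over $k$: the family $(k_n)_{n>0}$ is directed, because $k_m\subseteq k_N$ whenever $m\mid N$ (a solution of $\tau^m(y)=y$ is a solution of $\tau^N(y)=y$), so $k_m,k_n\subseteq k_{\operatorname{lcm}(m,n)}$; and each $k_n/k$ is finite by the preceding lemma. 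Thus it only remains to show that every $\alpha\in F$ algebraic over $k$ is $\tau$-periodic.

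So let $\alpha\in F$ have minimal polynomial $P\in k[X]$ over $k$, of degree $m$. Here one uses the hypothesis $F^\tau=k$: the coefficients of $P$ are $\tau$-invariant, so applying $\tau^i$ to $P(\alpha)=0$ yields $P(\tau^i(\alpha))=0$ for every $i\ge 0$. Hence the forward $\tau$-orbit $\{\tau^i(\alpha):i\ge 0\}$ is contained in the set of roots of $P$ in $F$, which has at most $m$ elements since $F$ is a field; in particular the orbit is finite, so there exist integers $0\le i<j$ with $\tau^i(\alpha)=\tau^j(\alpha)$. Since $F$ is linearly closed it is inversive (by the lemma above), so $\tau$ is injective on $F$; cancelling $\tau^i$ gives $\tau^{\,j-i}(\alpha)=\alpha$, that is $\alpha\in k_{\,j-i}\subseteq k_\infty$. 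This shows $k_\infty$ is the relative algebraic closure of $k$ in $F$, whence it is relatively algebraically closed in $F$.

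There is essentially no obstacle here; the argument is purely formal once one pins down the two standing hypotheses it relies on. The equality $k=F^\tau$ is what makes $\tau$ permute the roots in $F$ of the minimal polynomial of $\alpha$ over $k$, so that the $\tau$-orbit of $\alpha$ is finite. The linear $\tau$-closedness of $F$ enters only through the injectivity of $\tau$: the point to keep in mind is that a finite orbit under a mere endomorphism would give only \emph{eventual} periodicity of $\alpha$, and it is the injectivity of $\tau$ that upgrades this to $\tau^{\,j-i}(\alpha)=\alpha$ itself, landing $\alpha$ back in some $k_{\,j-i}$.
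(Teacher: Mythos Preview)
Your proof is correct and follows essentially the same route as the paper's: both show that any $\alpha\in F$ algebraic over $k$ has finite $\tau$-orbit among the roots of its minimal polynomial over $k=F^\tau$, and then use inversiveness of $F$ to deduce genuine periodicity. Your write-up is simply more explicit about the auxiliary points (that $k_\infty$ is a directed union hence a field, the transitivity reduction to algebraicity over $k$, and the cancellation step using injectivity of $\tau$), which the paper leaves implicit.
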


\begin{proof}
Let $f\in F$ be algebraic over $k$. Then it is separable with minimal polynomial $P\in k[X]$. The set $\{\tau^i(f):i\in\ZZ\}$
is a set of roots of $P$ so it is finite and there exists $i\neq j$ such that $\tau^i(f)=\tau^j(f)$. This means that there exists $n>0$ such that $f\in k_n$.
\end{proof}

\section{The Carlitz module over meromorphic functions}\label{section-carlitz}

We recall that $\mathcal{M}$ denotes the field of meromorphic functions over $\CC$.
We denote by $\nu$ both the identity map on $\CC$ and a variable on $\CC$ and we define
the automorphism $\tau$ on $\cM$ by
    $$
    \tau(f)(\nu)=f(\nu+1),\quad f\in\cM,\quad \nu\in\CC.
    $$
The field $k=\cM^\tau=\{f\in\mathcal{M}:\tau(f)=f\}$ being equal to
the field of meromorphic $1$-periodic functions, the field
$k_\infty\subset\cM$ is the subfield of the meromorphic functions over $\CC$ that are $n$-periodic for some positive integer $n$.
We observe that $\nu$ is transcendental over $k_\infty$, since the latter is relatively algebraically closed in $\cM$, by Corollary~\ref{cor:kinfty}.
Our base field is $K=k(\nu)\subset\cM$, and has  transcendence degree one over $k$. Then $(K,\tau)\subset(\cM,\tau)$ is a difference field with $k=K^\tau$.

We extend the action of $\tau$ componentwise on matrices.
According to the following result of Praagman, applied to \eqref{eq:sys},
the field $\cM$ is linearly $\tau$-closed:

\begin{thm}[{\cite[Theorem 2]{praagman_fundamental_1986}}]\label{theorem-Praagman}
\label{praagman-theorem}
Consider $M\in\operatorname{GL}_m(\cM)$. There exists $Y\in\operatorname{GL}_m(\cM)$
such that $\tau(Y)=MY$.
\end{thm}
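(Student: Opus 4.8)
The plan is to recast the equation $\tau(Y)=MY$ as the problem of trivializing a $\GL_{m}$-torsor on the quotient Riemann surface $X:=\CC/\ZZ$, and then to invoke the Oka--Grauert principle on $X$, which is non-compact, hence Stein. Write $p\colon\CC\to X$ for the quotient map; it is a Galois covering with group $\ZZ$ generated by $\tau$, and $X\cong\CC^{*}$ via $\nu\mapsto e^{2\pi i\nu}$. The structural observation I would start from is that the direct image $p_{*}\cM_{\CC}$ is a sheaf of $\cM_{X}$-algebras carrying a $\tau$-action with $(p_{*}\cM_{\CC})^{\tau}=\cM_{X}$ (and similarly for $\cO$): over a small disk $U\subset X$ one has $p^{-1}(U)=\bigsqcup_{n\in\ZZ}U_{n}$, so $p_{*}\cM_{\CC}(U)=\prod_{n\in\ZZ}\cM_{X}(U)$ is just the ring of $\ZZ$-indexed sequences of meromorphic functions on $U$, on which $\tau$ acts by shifting the index.

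Next I would introduce, for the given $M\in\GL_{m}(\cM(\CC))=\GL_{m}(H^{0}(X,p_{*}\cM_{\CC}))$, the subsheaf $\cY_{M}\subset\GL_{m}(p_{*}\cM_{\CC})$ of local solutions, $\cY_{M}(U)=\{Y:\tau(Y)=M|_{U}\,Y\}$. Over a disk $U$ as above, writing $Y=(Y_{n})_{n}$ and $M=(M_{n})_{n}$, the equation becomes (for an appropriate indexing) the recursion $Y_{n+1}=M_{n}Y_{n}$; since each $M_{n}\in\GL_{m}(\cM_{X}(U))$, it determines a unique section of $\cY_{M}$ from each invertible initial term $Y_{0}$. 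Hence $\cY_{M}$ is a torsor under the sheaf of groups $\cG:=\GL_{m}(\cM_{X})$, acting on the right (right multiplication by a $\tau$-invariant, i.e.\ constant-in-$n$, matrix preserves the equation). By construction $H^{0}(X,\cY_{M})$ is exactly the set of $Y\in\GL_{m}(\cM(\CC))$ with $\tau(Y)=MY$, so the theorem is equivalent to the vanishing of the class $[\cY_{M}]$ in the non-abelian cohomology set $H^{1}(X,\cG)$.

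It then remains to prove $H^{1}(X,\GL_{m}(\cM_{X}))=0$, which I would do in two steps. First, $H^{1}(X,\GL_{m}(\cO_{X}))=0$: this is the Oka--Grauert principle on the Stein curve $X$ --- holomorphic vector bundles on $X$ are classified by topological ones, and since $X\simeq S^{1}$ and $\GL_{m}(\CC)$ is connected, every topological rank-$m$ bundle on $X$ is trivial (for $m=1$ this is just $H^{1}(X,\cO_{X})=H^{2}(X,\ZZ)=0$). Second, one passes from $\cO_{X}$ to $\cM_{X}$: since every meromorphic function on the non-compact Riemann surface $X$ is a quotient of two holomorphic ones (Cousin~II being solvable on $X$ because $H^{2}(X,\ZZ)=0$), a $\GL_{m}(\cM_{X})$-cocycle is cohomologous to a $\GL_{m}(\cO_{X})$-cocycle after clearing denominators, and the first step concludes. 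Splitting $[\cY_{M}]$ produces the sought $Y\in\GL_{m}(\cM(\CC))$, whose invertibility is built into the construction. (Alternatively, one first uses a meromorphic gauge transformation to replace $M$ by a holomorphic invertible matrix --- possible again because bundles on the Stein curve are trivial --- and then works with $\GL_{m}(\cO_{X})$-torsors throughout.)

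The main obstacle is the analytic content of the first step above: the vanishing of $H^{1}(X,\GL_{m}(\cO_{X}))$ is not a formal statement and rests on the Oka--Grauert theorem (or, in this one-dimensional case, on the results of Behnke--Stein and R\"ohrl that non-compact Riemann surfaces are Stein and admit only trivial holomorphic vector bundles). A more routine but genuine difficulty is the passage from meromorphic to holomorphic transition data --- equivalently, controlling the polar loci of the local solutions along the $\ZZ$-orbits --- together with the careful check that $\cY_{M}$ really is a locally trivial $\cG$-torsor and the verification of $(p_{*}\cM_{\CC})^{\tau}=\cM_{X}$ on which the whole set-up rests.
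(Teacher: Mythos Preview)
The paper does not give its own proof of this statement: it is quoted verbatim as \cite[Theorem~2]{praagman_fundamental_1986} and used as a black box. The only indication the paper offers about the method is the remark (twice in the introduction) that Praagman's proof is ``based on cohomological arguments'' and ``is not constructive.'' Your sketch---descending to the quotient $X=\CC/\ZZ\cong\CC^{*}$, reinterpreting solutions as global sections of a $\GL_m(\cM_X)$-torsor, and killing the obstruction via Oka--Grauert/Behnke--Stein on the non-compact Riemann surface $X$---is precisely this cohomological route, so there is no meaningful divergence to compare.

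Your outline is sound and you have correctly identified where the genuine work sits: the vanishing $H^{1}(X,\GL_m(\cO_X))=0$ is exactly Grauert/R\"ohrl on a Stein curve, and the reduction from $\cM_X$-valued to $\cO_X$-valued cocycles is the one step that deserves a careful sentence beyond ``clearing denominators'' (in the non-abelian setting this is not literally clearing a scalar denominator; one typically uses that on a non-compact Riemann surface every divisor is principal to build, over each set of a refined cover, an invertible holomorphic matrix absorbing the poles, and then checks the modified cocycle is holomorphic). The alternative you mention---first gauging $M$ to an element of $\GL_m(\cO_\CC)$ and then running the torsor argument entirely in $\cO_X$---is the cleaner way to organize this and is closer to how Praagman actually proceeds.
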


In all the following we write $A=k[t]$, the $k$-algebra of polynomials in an indeterminate $t$ with coefficients in $k$.
We can now introduce the main structure studied in this paper:

\begin{defn}\label{definition:DM}
The {\em Carlitz module} over $\cM$ is the unique homomorphism of $k$-algebras
    $$A\xrightarrow{C}\cM[\tau]$$
such that
    $$C_t:=C(t)=\nu-\tau\in \cM[\tau].$$
\end{defn}

If $a=a_0+a_1t+\cdots+a_dt^d\in A$, the $\tau$-linear operator $C_a=\sum_{i=0}^da_i(\nu-\tau)^i$ can be rewritten in the form
$C_a=(a)_0+(a)_1\tau +\cdots+(a)_d\tau^d\in\cM[\tau]$, for certain elements $(a)_0,\ldots,(a)_d\in k[\nu]$.
Remark that $(a)_d=(-1)^da_d$ and that $(a)_0=\gamma(a)$, where
    $$\gamma:A\rightarrow k[\nu]$$
is the unique $k$-algebra morphism that sends $t$ to $\nu$.
This gives rise to a structure of $A$-module over $\cM$.
If $f$ is a meromorphic function and $a=a_0+a_1t+\cdots+a_dt^d$, we denote by $C_a(f)$ the multiplication of $f$ by $a$ for this $A$-module structure, namely:
     \begin{equation}\label{shape-of-C}
            C_a(f)=(a)_0f+(a)_1\tau(f)+\cdots+(a)_d\tau^d(f)\in\cM.
    \end{equation}
    This expression is also the evaluation of the operator $C_a\in K[\tau]$ at $f$.
We write $C(\cM)$ for the $k$-vector space $\cM$ with the structure of $A$-module described by Definition~\ref{definition:DM}. There are variants of this $A$-module structure that can be associated with other fields $(\cM',\tau)$ and that will be considered later; typically, they are defined over fields $\mathcal{M}'=\mathcal{M}_\Theta$ of meromorphic functions over $\Theta=\CC\setminus S$, with $S$ a locally discrete subset of $\CC$ which is invariant by integer translations, but constructions are essentially the same as
we described here.

The structure of the Carlitz module is deeply intertwined with the derivation in $\nu$. Nothing like this exists in the theory of the ``usual'' Carlitz module so we now spend some time on this.

The derivation
$D=\frac{d}{d\nu}$ of $\cM$ commutes with $\tau$ and we have the following identity of $\CC$-linear endomorphisms of $\mathcal{M}$
    $$D\circ f\tau=D(f)\tau+f\tau\circ D,$$
for any $f\in\cM$.
As far as $A$ is concerned, it is naturally equipped with the derivation $\frac{\partial}{\partial\nu}$, acting on $k$ and trivial on $t$, and the derivation
$\frac{\partial}{\partial t}$, which is trivial on $k$. We also consider the derivation on $A$ defined by $$D(a)=\frac{\partial a}{\partial\nu}+\frac{\partial a}{\partial t}$$ for $a\in A$.
We allowed an abuse of notation which is justified by the following observation. The derivation $D=\frac{d}{d\nu}$ over $\cM$
and the derivation $D=\frac{\partial}{\partial\nu}+\frac{\partial}{\partial t}$ over $A$ are compatible in that, over $A$,
    $$D\circ\gamma=\gamma\circ D$$ (the ``first'' $D$ is over $\mathcal{M}$, the ``second'' over $A$).
We also define an extension of the (trivial) action of $\tau$ from $k$ to $A$ setting $\tau:t\mapsto t+1$ so that we have, similarly,
    $$\tau\circ\gamma=\gamma\circ \tau.$$

\begin{lemma}\label{lemma:basicPropC(M)}
For all $a\in A$ we have:
\begin{enumerate}
  \item $\tau C_a=C_{\tau(a)}\tau$;
  \item $[D,C_a]:=D\circ C_a-C_a\circ D=C_{D(a)}$.
\end{enumerate}
\end{lemma}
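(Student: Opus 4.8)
The plan is to verify both identities by reducing, via the fact that $C\colon A\to\cM[\tau]$ is a homomorphism of $k$-algebras, to the generators $k$ and $t$ of the ring $A=k[t]$, where everything becomes a short computation in the skew polynomial ring $\cM[\tau]$.

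For part (1), I would first note that the set $T:=\{a\in A:\tau C_a=C_{\tau(a)}\tau\}$ is a sub-$k$-algebra of $A$. It is a $k$-subspace because $\tau$ fixes $k$ pointwise and $C$ is $k$-linear; and it is closed under multiplication since, for $a,b\in T$,
$\tau C_{ab}=(\tau C_a)C_b=C_{\tau(a)}(\tau C_b)=C_{\tau(a)}C_{\tau(b)}\tau=C_{\tau(ab)}\tau$,
using that $\tau$ is a ring endomorphism of $A$ and that $C$ is multiplicative. As a sub-$k$-algebra $T$ contains $k$ (where both sides reduce to multiplication by the $\tau$-invariant scalar $a$), so it remains only to check $a=t$. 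Here, using $\tau\nu=(\nu+1)\tau$ in $\cM[\tau]$, the left-hand side is $\tau(\nu-\tau)=(\nu+1)\tau-\tau^2$, and the right-hand side is $C_{t+1}\tau=(\nu+1-\tau)\tau=(\nu+1)\tau-\tau^2$; they agree, so $T=A$.

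For part (2) the same strategy needs a twist, because the derivation $D=\partial/\partial\nu+\partial/\partial t$ of $A$ is \emph{not} $k$-linear. Instead I would consider $F(a):=D\circ C_a-C_a\circ D-C_{D(a)}$, which is an element of $\cM[\tau]$ (indeed $D\circ C_a-C_a\circ D\in\cM[\tau]$ by the relation $D\circ f\tau=D(f)\tau+f\tau\circ D$ together with the fact that $D$ commutes with $\tau$). Since the operation $P\mapsto D\circ P-P\circ D$ is a derivation of the operator ring, $C$ is multiplicative, and $D$ is a derivation of $A$, one obtains $F(ab)=F(a)\circ C_b+C_a\circ F(b)$; as $F$ is also additive, $\{a\in A:F(a)=0\}$ is a subring of $A$, so it suffices to show $F$ vanishes on $k$ and at $t$. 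At $t$: since $D$ commutes with $\tau$ and $D\circ\nu=1+\nu\circ D$, we get $D\circ C_t-C_t\circ D=1=C_1$, while $D(t)=1$ so $C_{D(t)}=C_1=1$. At $c\in k$: $D\circ c-c\circ D$ is multiplication by $dc/d\nu\in k$, and $D(c)=\partial c/\partial\nu=dc/d\nu$, so $C_{D(c)}$ is again multiplication by $dc/d\nu$. In both cases $F=0$, hence $F\equiv 0$ on $A$.

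I do not expect a genuine obstacle: the argument is entirely formal once one is careful about the bookkeeping in $\cM[\tau]$ (keeping straight $\tau f=\tau(f)\tau$ and $D\circ f\tau=D(f)\tau+f\tau\circ D$), and about the single structural point in (2), namely that one must reduce along the generating set $k\cup\{t\}$ rather than $\{t\}$ alone, precisely because $D$ fails to be $k$-linear on $A$.
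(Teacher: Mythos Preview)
Your proof is correct and follows essentially the same approach as the paper: reduce to the generators $k$ and $t$ via the $k$-algebra structure, check the key identity $D\circ C_t-C_t\circ D=1$ directly, and handle the non-$k$-linearity of $D$ separately on coefficients. The only difference is packaging: the paper runs an explicit induction on powers $t^i$ and then extends by linearity (computing $D\circ C_a=\sum D(a_i)(\nu-\tau)^i+\sum a_iC_{t^i}\circ D+\sum a_iC_{D(t^i)}$), whereas you encode the same Leibniz-type step once in the identity $F(ab)=F(a)\circ C_b+C_a\circ F(b)$ and conclude that the zero set of $F$ is a subring; your formulation is slightly slicker but the content is identical.
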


\begin{proof}
The first identity is straightforward. Let us prove the second one.
First of all we observe that for any $f\in\cM$ we have:
  \begin{eqnarray*}
    (D\circ(\nu-\tau))(f)&=&D(\nu f-\tau(f))\\
    &=&\nu D(f)+ f-\tau(D(f))\\
    &=&((\nu-\tau)\circ D)(f)+f,
    \end{eqnarray*}
hence
    $$[D,C_t]=D\circ C_t-C_t\circ D=C_{D(t)}=1.$$
If we suppose that for some positive integer $i$ we have
$D\circ C_{t^i}-C_{t^i}\circ D=C_{D(t^i)}=C(it^{i-1})$, then:
    \[
    \begin{array}{rcl}
    D\circ C_{t^{i+1}}
    &=&D\circ C_{t^i} \circ C_t\\
    &=& (C_{t^i}\circ D+C_{D(t^i)})\circ C_t\\
    &=&C_{t^{i+1}}\circ D+C_{t^i}+C_{tD(t^i)}\\
    &=&C_{t^{i+1}}\circ D+C_{D(t^{i+1})}.
    \end{array}
    \]
This proves the second identity for $a=t^i\in A$ by induction for any positive integer $i$.
By linearity, for any $a=\sum_{i=0}^na_it^i\in A$ we have:
    $$
    \begin{array}{rcl}
      D\circ C_a & = & \displaystyle \sum_{i=0}^nD(a_i)(\nu-\tau)^i+ \sum_{i=0}^na_iC_{t^i}\circ D+ \sum_{i=1}^na_iC_{D(t^i)} \\~\\
      & = & C_\frac{\partial a}{\partial\nu}+C_a\circ D + C_{\frac{\partial a}{\partial t}}= C_a\circ D + C_{D(a)}.
    \end{array}
    $$
This ends the proof of the lemma.\end{proof}

Recall that $$\La_a=\ker_k(C_a)=\{f\in \cM: C_a(f)=0\}.$$
By Theorem \ref{theorem-Praagman} applied to the system~\eqref{eq:sys} associated to $C_a$, $\La_a$ is a $k$-vector space of dimension $\deg_t(a)$.
Since for any $m\in\La_a$ and any $b\in A$ we have $C_{ab}(m)=C_a(C_b(m))=C_b(C_a(m))=0$,
one easily shows that the $k$-vector space $\La_a$ is an $A$-module for the structure defined via the morphism $C$.

\begin{defn}
{\em For $a\in A$, the $A$-module $\La_a$ is the {\em $a$-torsion submodule} of $C(\cM)$.}
\end{defn}

\begin{lemma}\label{lemma-deri}
If $a\neq 0$, $\La_a\cap k_\infty=\{0\}$.
\end{lemma}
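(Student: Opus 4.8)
The plan is to argue by contradiction, descending on the $t$-degree of the annihilating polynomial. Suppose $m\in\La_a\cap k_\infty$ is nonzero; by definition of $k_\infty$ there is an integer $N\geq 1$ with $\tau^N(m)=m$. Set $d:=\deg_t(a)$ and let $\Delta$ denote the $k$-linear operator on $A=k[t]$ sending $b(t)$ to $b(t+N)-b(t)$.

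The key step will be to prove that $C_{\Delta^j a}(m)=0$ for every $j\geq 0$, by induction on $j$: the case $j=0$ is the hypothesis $C_a(m)=0$, and for the inductive step I would iterate Lemma~\ref{lemma:basicPropC(M)}(1) — using that $\tau$ acts on $A$ by $t\mapsto t+1$ — to get the identity $\tau^N C_{\Delta^j a}=C_{(\Delta^j a)(t+N)}\,\tau^N$ in $\cM[\tau]$; evaluating both sides at $m$, using $\tau^N(m)=m$ and the inductive hypothesis, gives $C_{(\Delta^j a)(t+N)}(m)=\tau^N\big(C_{\Delta^j a}(m)\big)=0$, and subtracting $C_{\Delta^j a}(m)=0$ (by $k$-linearity of $C$) yields $C_{\Delta^{j+1}a}(m)=0$.

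To finish I would take $j=d$: a standard finite-difference computation gives $\Delta^d a=d!\,N^d\,a_d$ with $a_d$ the leading coefficient of $a$, which is a \emph{nonzero} element of $k$ because the base field has characteristic zero, $N\geq 1$ and $a_d\neq 0$. Since $C$ is a morphism of $k$-algebras, $C_{\Delta^d a}$ is multiplication by the nonzero scalar $\Delta^d a\in k$, so $C_{\Delta^d a}(m)=0$ forces $m=0$ — a contradiction. (The case $d=0$, i.e. $a\in k\setminus\{0\}$, is already covered, since then $\La_a=\{0\}$.)

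The main point to watch is that $\La_a$ is in general \emph{not} $\tau$-stable — one only has $C_a\tau=\tau\,C_{\tau^{-1}(a)}$, so $\tau$ maps $\La_a$ into $\La_{a(t-1)}$, not back into $\La_a$ — which is why the argument must use the full periodicity $\tau^N(m)=m$ and not merely $\tau(m)$. The other, more routine, point is the characteristic-zero hypothesis: it guarantees that $\Delta$ strictly lowers $t$-degree and that $\Delta^d$ does not kill the leading term of $a$, so that the descent really terminates at a nonzero constant.
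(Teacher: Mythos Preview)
Your proof is correct. Both your argument and the paper's hinge on the same observation, obtained from Lemma~\ref{lemma:basicPropC(M)}(1): if $m$ is $N$-periodic and $C_b(m)=0$, then also $C_{b(t+N)}(m)=0$. From there the two arguments diverge slightly. The paper first replaces $a$ by the monic generator of the annihilator of $m$ in $A$; the observation then gives $a\mid a(t+N)$, and comparing degrees and leading coefficients forces $a(t)=a(t+N)$, so $a$ is constant --- contradicting $a\in A\setminus k$. You instead stay with the given $a$ and iterate: the observation plus $k$-linearity of $C$ gives $C_{\Delta a}(m)=0$, and repeating $d$ times lands on the nonzero constant $\Delta^d a=d!\,N^d a_d$. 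The paper's route is a touch slicker (one step of divisibility rather than $d$ steps of differencing), while yours avoids any appeal to the ideal structure of $A$ and works directly with the given polynomial; both are short and entirely elementary.
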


\begin{proof}
For any $a\in k^\times\subset A$, we have $\La_a=\{0\}$ so we still need to prove the statement under the assumption that $a\notin k$.
Suppose by contradiction that $f\in k_\infty\setminus\{0\}$ is such that there exists $a\in A\setminus k$
with $C_a(f)=0$. This means that there exists $d>0$
such that $\tau^d(f)=f$ and at once the annihilator of $f$ for the Carlitz module structure $C(\cM)$ is non-trivial.
We can suppose that $a$ is a generator of the annihilator of $f$ and that it is monic.
By Lemma~\ref{lemma:basicPropC(M)}, we have
    $$
    C_{\tau^d(a)}(f)=C_{\tau^d(a)}(\tau^d(f))=\tau^d(C_a(f))=0.
    $$
Therefore $\tau^d(a)$ belongs to the annihilator of $f$ as well and $a\mid \tau^d(a)$ in $A$.
Since $a$ and $\tau^d(a)$ have the same degree in $t$ and the same leading coefficient, this implies $a=\tau^d(a)$, hence $a\in k$, which
contradicts the hypothesis.
\end{proof}

We have:

\begin{prop}
\label{prop:descriptionofthetorsion}
Assume that $a\neq 0$. The $A$-module $\La_a$ is isomorphic to $\frac{A}{aA}$.
\end{prop}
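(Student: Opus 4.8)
The plan is to combine the structure theorem for finitely generated modules over the principal ideal domain $A=k[t]$ with the dimension formula $\dim_k\ker_k(C_b)=\deg_t(b)$, which holds for every nonzero $b\in A$ by Remark~\ref{rmk:torsion}. I would first dispose of the trivial case $a\in k^\times$: there $C_a$ is multiplication by a nonzero scalar, so $\La_a=0=A/aA$, and I may henceforth assume $\deg_t(a)\ge 1$. In general $\La_a$ is a torsion $A$-module — it is killed by $a$ by the very definition of $\La_a=\ker_k(C_a)$ — and it is finitely generated over $A$, since by Remark~\ref{rmk:torsion} it is already finite-dimensional over $k\subset A$. The structure theorem then gives $\La_a\cong\bigoplus_{i=1}^m A/(d_i)$ with non-constant invariant factors $d_1\mid d_2\mid\cdots\mid d_m$ in $A$, and moreover $\mathrm{Ann}_A(\La_a)=(d_m)$.

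The heart of the argument is to show that $d_m$ equals $a$ up to a unit of $k^\times$. On the one hand $a\in\mathrm{Ann}_A(\La_a)=(d_m)$, so $d_m\mid a$. On the other hand $d_m$ annihilates $\La_a$, hence $\La_a\subseteq\ker_k(C_{d_m})=\La_{d_m}$; applying the dimension formula to $a$ and to $d_m$ yields
\[
\deg_t(a)=\dim_k\La_a\le\dim_k\La_{d_m}=\deg_t(d_m)\le\deg_t(a),
\]
so that $\deg_t(d_m)=\deg_t(a)$, and together with $d_m\mid a$ this forces $d_m=a$ up to a unit.

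To conclude, I would compare $k$-dimensions once more: $\deg_t(a)=\dim_k\La_a=\sum_{i=1}^m\deg_t(d_i)$ while $\deg_t(d_m)=\deg_t(a)$ and each $\deg_t(d_i)\ge 1$; hence $\sum_{i<m}\deg_t(d_i)=0$, i.e.\ $m=1$, and therefore $\La_a\cong A/(d_1)=A/aA$. I do not anticipate a genuine obstacle with this route — the whole proof is essentially the interplay between the module-theoretic annihilator and the dimension count. The only point that calls for a little care is verifying that the dimension formula of Remark~\ref{rmk:torsion} genuinely applies to the divisor $d_m$, which amounts to noting that $C_{d_m}$ has invertible leading coefficient and invertible constant coefficient $\gamma(d_m)$, the latter being nonzero because $\gamma\colon A\to k[\nu]$ is injective.
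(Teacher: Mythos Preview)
Your proof is correct and follows essentially the same approach as the paper: both compute the annihilator of $\La_a$ as $(a)$ via the inclusion $\La_a\subset\La_Q$ combined with the dimension formula $\dim_k\La_b=\deg_t(b)$, then invoke the structure theorem and a dimension count to conclude. The only cosmetic difference is that you use the invariant-factor form $d_1\mid\cdots\mid d_m$ and deduce $m=1$ directly, whereas the paper writes $\La_a\cong\bigoplus_iA/(a_i)$ with unrelated $a_i$, obtains $\operatorname{lcm}(a_i)=\prod_ia_i$ from the same dimension equality, and finishes with the Chinese remainder theorem.
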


\begin{proof}
This is simple but we prefer to give all details because the analogy with the function field theory breaks here somehow, at least if we compare with the treatise of Goss \cite{goss2012basic}.
Without loss of generality we can suppose that $a$ is non-constant and monic.
We know that $\La_a$ is a vector space over $k$ of dimension $\deg_t(a)$ (Theorem \ref{theorem-Praagman}).
The annihilator $\operatorname{Ann}_A(\La_a)=\{b\in A:\La_a\subset\La_b\}$ of $\La_a$ in $A$ is a principal ideal $(Q)$, proper (because it contains $a$), generated by a monic polynomial $Q\in A$ such that $\deg_t(Q)\geq1$, with $Q\mid a$.
In particular, $1\leq \deg_t(Q)\leq \deg_t(a)$. At once $\La_a\subset\La_Q$ which, in terms of dimensions over $k$ and using Theorem \ref{theorem-Praagman} again, implies $\deg_t(a)=\dim_k(\La_a)\leq\dim_k(\La_Q)=\deg_t(Q)$.
Therefore $\deg_t(Q)=\deg_t(a)$, and $(Q)=(a)$. In any case, by the structure theorem of torsion $A$-modules ($A$ is a principal ideal ring) we have an isomorphism of $A$-modules $\La_a\cong\oplus_iA/(a_i)$ for finitely many non-constant monic polynomials $a_i$. Computing annihilators we get the equality
$(a)=\cap_i(a_i)=(\operatorname{lcm}(a_i))$ (monic least common multiple). A last application of Theorem \ref{theorem-Praagman} yields
$\deg_t(a)=\deg_t(\operatorname{lcm}(a_i))\leq\sum_i\deg_t(a_i)=\deg_t(a)$. Therefore $\deg_t(\operatorname{lcm}(a_i))=\sum_i\deg_t(a_i)=\deg_t(a)$ which means that $\operatorname{lcm}(a_i)=\prod_ia_i$ and the polynomials $a_i$ are relatively prime. By the Chinese remainder theorem, $a=\prod_ia_i$ and we have an $A$-module isomorphism $\La_a\cong A/(a)$.
 \end{proof}

We record:

\begin{cor}\label{cor:descriptionofthetorsion}
If $a\in A$ factors as $a=P_1^{r_1}\cdots P_s^{r_s}$ into a product of irreducible polynomials, then
$\La_a\cong\oplus_{i=1}^s\La_{P_i^{r_i}}$ as $A$-modules and $\La_a=\oplus_{i=1}^s\La_{P_i^{r_i}}$ as $A$-submodules of $\mathcal{M}$.
\end{cor}

\begin{proof}
The first property for abstract $A$-modules is obvious. For the second property observe that $\La_{P_i^{r_i}}\subset \La_a$ for all $i$ as submodules of $\mathcal{M}$. These submodules have pairwise trivial intersections because the polynomials $P_i$ are
pairwise coprime. Since $\sum_i\dim_k(\La_{P_i^{r_i}})=\sum_ir_i\deg_t(P_i)=\deg_t(a)=\dim_k(\La_a)$ by Theorem \ref{theorem-Praagman} the $k$-subvector spaces $\La_{P_i^{r_i}}$ span $\La_a$ and $\La_a$ decomposes in the direct sum of its subspaces $\La_{P_i^{r_i}}$.
\end{proof}

Moreover, Proposition~\ref{prop:descriptionofthetorsion} implies the following:

\begin{cor}\label{cor-generators}
For all $a\in A$ the set of generators of $\La_a$ as an $A/(a)$-module is in one-to-one correspondence with the units of the ring $A/(a)$.
\end{cor}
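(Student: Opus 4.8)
The plan is to reduce everything to the elementary module-theoretic fact that for a commutative ring $R$ (here $R = A/(a)$), a single element $m \in R$ generates $R$ as an $R$-module if and only if $m$ is a unit. By Proposition~\ref{prop:descriptionofthetorsion} there is an isomorphism of $A$-modules $\varphi \colon A/(a) \xrightarrow{\sim} \La_a$; since $A/(a)$ is a cyclic $A$-module, this isomorphism is automatically $A/(a)$-linear once we equip $\La_a$ with the induced $A/(a)$-module structure, and it carries the distinguished generator $\bar 1 \in A/(a)$ to some fixed generator $m_0 := \varphi(\bar 1) \in \La_a$. First I would record that an element $m \in \La_a$ generates $\La_a$ over $A/(a)$ precisely when $\varphi^{-1}(m) \in A/(a)$ generates the free rank-one module $A/(a)$ over itself, i.e.\ precisely when $\varphi^{-1}(m)$ is a unit of $A/(a)$.

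Next I would make the correspondence explicit and check it is a bijection. Writing $u := \varphi^{-1}(m) \in A/(a)$, the map $m \mapsto u$ is a bijection from the set of generators of $\La_a$ onto the set of units $(A/(a))^\times$, with inverse $u \mapsto \varphi(u) = u \cdot m_0 = C_{\tilde u}(m_0)$, where $\tilde u \in A$ is any lift of $u$ (this last expression uses that the $A/(a)$-action on $\La_a$ is by the Carlitz operators $C_b$, which factor through $A/(a)$ because $C_a$ annihilates $\La_a$, cf.\ Remark~\ref{rmk:torsion}). Bijectivity is immediate from $\varphi$ being an isomorphism, so the only content is the equivalence ``$m$ generates $\iff \varphi^{-1}(m)$ is a unit,'' which is the standard fact quoted above applied to $R = A/(a)$.

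There is essentially no obstacle here; the statement is a formal consequence of Proposition~\ref{prop:descriptionofthetorsion}. The only point that deserves a sentence of care is the passage from the $A$-module isomorphism of Proposition~\ref{prop:descriptionofthetorsion} to an $A/(a)$-module isomorphism: one must note that both $A/(a)$ and $\La_a$ are annihilated by $(a)$, so each is canonically an $A/(a)$-module and the given $A$-linear isomorphism is automatically $A/(a)$-linear. With that observed, the corollary follows.
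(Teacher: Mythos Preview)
Your proposal is correct and matches the paper's approach: the paper gives no formal proof but simply states that the corollary follows from Proposition~\ref{prop:descriptionofthetorsion}, and Remark~\ref{rmk:descriptionofthetorsion} makes the bijection explicit via $b \mapsto C_b(m)$ for a fixed generator $m$, exactly as you do. The only extra care you take---noting that the $A$-module isomorphism is automatically $A/(a)$-linear since both sides are annihilated by $(a)$---is a reasonable clarification that the paper leaves implicit.
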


\begin{rmk}\label{rmk:descriptionofthetorsion}
In the language developed by Goss in \cite{goss2012basic}, the elements of $\mathcal{M}$ corresponding to the
the units of the ring $A/(a)$ could be called {\em primitive $a$-division points}.
The corollary above says that, if $m$ is a generator of $\La_a$ as an $A$-module, then the sets of generators of $\La_a$
as an $A$-module is in bijection with
any set of representatives of the classes of invertible
elements of $A$ modulo $(a)$, via the map $b\mapsto C_b(m)$.
A basis of $\La_a$ as a $k$-vector space, on the other hand, is given by $m,C_t(m),\dots,C_{t^{n-1}}(m)$ and therefore
$\La_a=\oplus_{i=0}^{n-1}k\,C_{t^i}(m)$.
The results from Proposition \ref{prop:descriptionofthetorsion} to here are parallel with the results of \cite[\S 3.3]{goss2012basic}. In particular, the $k$-vector spaces $\La_a$ are analogues, in our settings, of the torsion $\FF_q$-vector spaces of ibidem, Definition 3.3.8, and Proposition \ref{prop:descriptionofthetorsion} is the analogue of the arguments described in Goss' Proposition 3.3.8. However, proofs differ for the important fact that
in the whole \S 3.3, Goss uses particular $\infty$-adic uniformizations of Carlitz module structures such as the {\em Carlitz exponential function} of \cite[Theorem 3.2.8]{goss2012basic}, that uniformizes torsion points, unavailable in our setting. This produces a fundamental difference of our work, if compared with the global function field theory.
\end{rmk}

For further reference, we describe two additional properties of $\La_a$. The first one links the torsion of the power of an irreducible polynomial to the action of the derivation $D$.

\begin{prop}\label{prop:PowerOfIrriducibleTorsion}
Let $P\in A$ be monic and irreducible and consider elements $y_1,\dots,y_n\in\cM$ constituting a $k$-basis of $\La_P$.
Then for every integer $r\geq 1$, the meromorphic functions
    $$
    D^j(y_i),\hbox{~for $i=1,\dots,n$ and $j=0,\dots,r-1$,}
    $$
form a $k$-basis of $\Lambda_{P^r}$.
\end{prop}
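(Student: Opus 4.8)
The plan is to count dimensions and then establish linear independence. Since $\dim_k \La_{P^r} = \deg_t(P^r) = rn$ by Remark~\ref{rmk:torsion}, and the proposed list $D^j(y_i)$ for $i=1,\dots,n$ and $j=0,\dots,r-1$ has exactly $rn$ elements, it suffices to prove two things: (a) each $D^j(y_i)$ lies in $\La_{P^r}$, and (b) the $rn$ functions $D^j(y_i)$ are $k$-linearly independent.

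For (a), I would argue by induction on $j$ using part (2) of Lemma~\ref{lemma:basicPropC(M)}. Note that $D(P^r) = r P^{r-1} D(P)$, and more generally iterating the commutation rule $D\circ C_a - C_a\circ D = C_{D(a)}$ gives, for each $j$, an identity of the form
    $$
    C_{P^r}\circ D^j = \sum_{\ell=0}^{j} \binom{j}{\ell}\, C_{D^\ell(P^r)}\circ D^{j-\ell},
    $$
where $D^\ell$ denotes the $\ell$-th iterate of the derivation on $A$ (this is just the Leibniz-type expansion coming from the fact that $a\mapsto C_a$ is a $k$-algebra map intertwining $D$ on $A$ with the adjoint action of $D$ on $\cM[\tau]$). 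Since $P \mid D^\ell(P^r)$ in $A$ for every $\ell\le j \le r-1$ (indeed $D^\ell(P^r)$ is divisible by $P^{r-\ell}$, hence by $P$ as $r-\ell\ge 1$), and since $C_P(y_i)=0$, every term $C_{D^\ell(P^r)}\big(D^{j-\ell}(y_i)\big)$ can be rewritten: writing $D^\ell(P^r) = P\cdot b_\ell$ we get $C_{D^\ell(P^r)}(z) = C_P(C_{b_\ell}(z))$, and applying this to $z = D^{j-\ell}(y_i)$ — here I would instead run the induction more carefully so that I already know $D^{j-\ell}(y_i)\in \La_{P}$-adjacent spaces; cleaner is a direct induction showing $C_{P^r}(D^j y_i)=0$ for $j\le r-1$ by peeling off one derivative at a time and invoking $C_P(y_i)=0$ at the base. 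Either way (a) follows.

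For (b), suppose $\sum_{i,j} c_{ij} D^j(y_i) = 0$ with $c_{ij}\in k$. I would group by powers of $D$: write $g_j := \sum_i c_{ij} y_i \in \La_P$ for each $j$, so the relation reads $\sum_{j=0}^{r-1} D^j(g_j)\cdot(\text{coefficients})$ — but this is not quite a clean grouping because the $c_{ij}$ are constants and $D^j$ acts only on $y_i$, so in fact $\sum_j D^j(g_j) = 0$ is exactly the relation, where each $g_j\in\La_P$. Now I want to conclude all $g_j=0$, which by Lemma~\ref{lemma:tauwronskian} (or directly, since $y_1,\dots,y_n$ are $k$-linearly independent) forces all $c_{ij}=0$. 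So the crux reduces to: \emph{if $g_0,\dots,g_{r-1}\in\La_P$ and $\sum_{j=0}^{r-1} D^j(g_j)=0$, then all $g_j=0$.} This is where the irreducibility of $P$ enters. The key observation is that $\La_P$, equipped with the $k$-linear map induced by $D$, carries extra structure: by part (2) of Lemma~\ref{lemma:basicPropC(M)}, for $g\in\La_P$ one has $C_P(D(g)) = C_{D(P)}(g)$, and since $\gcd(P, D(P))=1$ in $A$ (as $P$ is irreducible of positive degree and $D(P)$ has strictly smaller $t$-degree, being $\frac{\partial P}{\partial t}$ plus lower-order terms — here I must be slightly careful that $D(P) = \frac{\partial P}{\partial \nu} + \frac{\partial P}{\partial t}$ need not have smaller degree, but it is not divisible by $P$: if $P \mid D(P)$ then $D(P)$ is a $k$-multiple of $P$ by degree reasons, forcing $D(P)=0$ since $D$ lowers... no — I would instead argue $P\nmid D(P)$ because otherwise $P\mid \frac{\partial P}{\partial t}$, impossible for $\deg_t P \ge 1$, unless $\frac{\partial P}{\partial t}=0$, i.e. $P\in k$, excluded; the $\frac{\partial P}{\partial \nu}$ part has $t$-degree $\le \deg_t P$ with the same leading coefficient only if that leading coefficient is non-constant — this needs the normalization $P$ monic, giving $\frac{\partial P}{\partial\nu}$ of $t$-degree $< \deg_t P$). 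Hence $C_{D(P)}$ is injective on $\La_P$ — indeed $C_{D(P)}$ acting on $\La_P\cong A/(P)$ is multiplication by the class of $D(P)$, which is a unit since $A/(P)$ is a field and $D(P)\notin(P)$.

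Granting that, I finish (b) by descending induction. Apply $C_{P^{r-1}}$ to the relation $\sum_{j=0}^{r-1}D^j(g_j)=0$: using the expansion from step (a), all terms with $j\le r-2$ die (since $C_{P^{r-1}}(D^j g_j)=0$ for $j\le r-2$, by the same reasoning as (a) applied to exponent $r-1$), leaving $C_{P^{r-1}}(D^{r-1}(g_{r-1}))=0$. Unwinding the commutators, $C_{P^{r-1}}\circ D^{r-1}$ applied to $g_{r-1}\in\La_P$ produces $(r-1)!\, C_{(D(P))^{r-1}}(g_{r-1})$ plus terms each carrying a factor $C_P$ applied to something in the span of $g_{r-1}$ and its derivatives that still lies in the relevant kernels — concretely, the only surviving term is $(r-1)!\,C_{D(P)}^{r-1}(g_{r-1})$, and since $C_{D(P)}$ is injective on $\La_P$ and the ambient characteristic is zero, we get $g_{r-1}=0$. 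Then strip that term off and repeat with $C_{P^{r-2}}$ to get $g_{r-2}=0$, and so on down to $g_0=0$. This yields all $c_{ij}=0$, proving $k$-linear independence; combined with (a) and the dimension count $rn$, the functions $D^j(y_i)$ form a $k$-basis of $\La_{P^r}$.

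\emph{Main obstacle.} The delicate point is the bookkeeping in the commutator expansion of $C_{P^r}\circ D^j$ and, above all, verifying that $C_{D(P)}$ (equivalently, multiplication by $D(P)$ in $A/(P)$) is invertible on $\La_P$ — i.e. that $P\nmid D(P)$ when $P$ is monic irreducible of positive $t$-degree. This rests on the normalization of $P$ as monic in $t$ (so that $\frac{\partial P}{\partial\nu}$ drops $t$-degree) together with irreducibility forcing $\frac{\partial P}{\partial t}\ne 0$; once that is in hand, the field structure of $A/(P)$ and the characteristic-zero hypothesis make the descending induction go through cleanly.
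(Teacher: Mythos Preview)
There is a genuine gap in part (b). You write $g_j := \sum_i c_{ij}y_i$ and assert that the relation $\sum_{i,j}c_{ij}D^j(y_i)=0$ rewrites as $\sum_j D^j(g_j)=0$ because ``the $c_{ij}$ are constants and $D^j$ acts only on $y_i$.'' This is false: the coefficients $c_{ij}$ lie in $k$, the field of $1$-periodic meromorphic functions, and $D=\frac{d}{d\nu}$ acts \emph{nontrivially} on $k$ (as the paper stresses in the introduction). In general $D^j(c_{ij}y_i)\neq c_{ij}D^j(y_i)$, so the reduction to your auxiliary statement about $\sum_j D^j(g_j)$ is not justified. You have conflated $\tau$-constants with $D$-constants.

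The fix is straightforward and your descending-induction strategy survives it. The operators $C_{P^s}$ are genuinely $k$-linear (since $\tau$ fixes $k$), so apply $C_{P^{r-1}}$ directly to $\sum_{i,j}c_{ij}D^j(y_i)=0$: every term with $j\le r-2$ dies (those $D^j(y_i)$ lie in $\La_{P^{r-1}}$ by part (a)), leaving $\sum_i c_{i,r-1}\,C_{P^{r-1}}(D^{r-1}(y_i))=0$. Your computation $C_{P^{r-1}}(D^{r-1}(y_i))=\pm(r-1)!\,C_{(D(P))^{r-1}}(y_i)$ for $y_i\in\La_P$ is correct, and since $C_{(D(P))^{r-1}}$ is $k$-linear and injective on $\La_P$ you get $\sum_i c_{i,r-1}y_i=0$, hence all $c_{i,r-1}=0$; then iterate. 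The paper organises the same ingredients differently---inducting on the exponent $h$ and showing that the $D^{h-1}(y_i)$ span $\La_{P^h}/\La_{P^{h-1}}$---and there the Leibniz cross-terms $D(c_i)D^{h-2}(y_i)$ are written out explicitly and disposed of via the inductive hypothesis, which is exactly the bookkeeping your write-up elided.

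A minor slip in part (a): the Leibniz-type identity should read $D^j\circ C_{P^r}=\sum_{\ell}\binom{j}{\ell}C_{D^\ell(P^r)}\circ D^{j-\ell}$, not $C_{P^r}\circ D^j$ on the left.
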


\begin{proof}
As a first step we prove that for all $r\geq1$, if $y_1,\ldots,y_n$ are as in the hypotheses, then
the elements $D^j(y_i)$ are in $\La_{P^r}$ for $i=1,\ldots,n$ and $j=0,\ldots,r-1$. This is our hypothesis if $r=1$ so we prove
the property by induction. In particular, we can thus suppose that $y_i,\ldots,D^{h-2}(y_i)\in\La_{P^{h-1}}$ for all $i$ for an integer $h>1$. As $\La_{P^{h-1}}\subset\La_{P^h}$ it remains to show that $D^{h-1}(y_i)\in\La_{P^h}$ (for all $i$). We apply
the second identity of Lemma \ref{lemma:basicPropC(M)} which yields:
$$[D,C_{P^h}]=DC_{P^h}-C_{P^h}D=C_{D(P^h)}=hC_{P^{h-1}D(P)}.$$
Now compute, for $i=1,\ldots,n$:
\begin{eqnarray*}
C_{P^h}(D^{h-1}(y_i))&=&C_{P^h}(D(D^{h-2}(y_i)))\\
&=&D(C_{P^h}(D^{h-2}(y_i)))-hC_{D(P)}(C_{P^{h-1}}(D^{h-2}(y_i)))\\
&=&0.
\end{eqnarray*}
Indeed, $D(C_{P^h}(D^{h-2}(y_i)))=0$ by hypothesis and by the fact that $\La_{P^{h-1}}\subset\La_{P^h}$, and
$C_{P^{h-1}}(D^{h-2}(y_i))=0$ by hypothesis. This concludes the first step.

As a second step we prove that for all $r\geq1$, the $rn$ elements $D^j(y_i)$ for $i=1,\ldots,n$ and $j=0,\ldots,r-1$
are linearly independent over $k$. This is our hypothesis for $r=1$. By the first step, these are all elements of $\La_{P^r}$,
and in fact, for all $i,j$, $D^j(y_i)\in\La_{P^{j+1}}$. Let us suppose by contradiction that the property we want to prove is false.
Then, there exists $h>0$ minimal with the property that it also exists a non-trivial $k$-linear combination
$$\ell:=\sum_{i=1}^nc_iD^h(y_i)\in\La_{P^h}$$
(the coefficients $c_i$ are in $k$ and are not all zero). In particular, if $1\leq k<h$
$$\sum_{i=1}^nd_iD^k(y_i)\in\La_{P^{k+1}}\setminus\La_{P^k}$$ for all $d_i\in k$, unless the linear combination is trivial.
Now set
$$\widetilde{\ell}:=\sum_{i=1}^nc_iD^{h-1}(y_i),$$
which belongs to $\La_{P^h}$ by the first step (we use the same coefficients of $\ell$). We have $\widetilde{\ell}\in\La_{P^h}\setminus\La_{P^{h-1}}$. Now, since
$C_{P^h}(\widetilde{\ell})=0$ we have:
\begin{eqnarray*}
0&=&D(C_{P^h}(\widetilde{\ell}))\\
&=&C_{P^h}(D(\widetilde{\ell}))+hC_{P^{h-1}D(P)}(\widetilde{\ell}).
\end{eqnarray*}
Note that by Leibniz rule,
$$D(\widetilde{\ell})=\ell+\sum_{i=1}^nc_iD^{h-1}(y_i).$$ Since $\sum_{i=1}^nc_iD^{h-1}(y_i)\in\La_{P^h}$
we get $C_{P^h}(D(\widetilde{\ell}))$ and $$C_{P^{h-1}D(P)}(\widetilde{\ell})=C_{P^{h-1}}(C_{D(P)}(\widetilde{\ell}))=0.$$
But $P$ is irreducible and $D(P)$ is invertible modulo $P^{h-1}$. This implies that $C_{P^{h-1}}(\widetilde{\ell})=0$ contradicting our hypothesis.

We deduce that the $rn$ elements $D^j(y_i)\in\La_{P^r}$ for $i=1,\ldots,n$ and $j=0,\ldots,r-1$
are linearly independent over $k$. Since $\dim_k(\La_{P^r})=rn$ they are a basis of $\La_{P^r}$.
\end{proof}

\begin{rmk} There is a similar description for $\La_a$ with $a\in A$ general, not just a power of an irreducible polynomial.
\end{rmk}

Le $a$ be an element of $A$. Let $L$ be a field extension of $K$ in $\mathcal{M}$ such that
$\tau$ induces a $k$-automorphism of it.
We write $L\La_a$ for the $L$-span of $\La_a$ in $\mathcal{M}$. It is an $L$-vector space
of dimension $\leq \deg_t(a)$. We also write $L(\La_a)$ for the subfield of $\mathcal{M}$ generated by $L$ and the elements of
$\La_a$.

\begin{lemma}\label{lemma:coprime}
If $a,b\in A$ then
$L(\La_{ab})=L(\La_a)L(\La_b)$.
\end{lemma}

\begin{proof}
By Corollary \ref{cor:descriptionofthetorsion},
$\La_a,\La_b\subset\La_{ab}=\La_a+\La_b$ as $A$-submodules of $\mathcal{M}$. Hence
$$L\La_a,L\La_b\subset L\La_{ab}= L\La_a+L\La_b\subset L(\La_a)L(\La_b).$$
We deduce $$L(\La_a),L(\La_b)\subset L(\La_{ab})\subset L(\La_a)L(\La_b)$$ so we can conclude.
\end{proof}

\begin{lemma}\label{lemma:torsion}
We have that $\tau(L\La_a)=L\La_a$, hence $\tau(L(\La_a))=L(\La_a)$.
\end{lemma}

\begin{proof}
We first observe that $\tau=\nu-C_t$ in $k[\nu][\tau]$. Since $\La_a$ is an $A$-module with the structure induced by the Carlitz module, $\tau(\La_a)\subset L\La_a$. But $\tau(L)=L$ hence
$\tau(L\La_a)=L\tau(\La_a)\subset L\La_a$. In particular $\tau(L\La_a)$ is an $L$-subvector space of $L\La_a$ even though
$\tau$ is not $L$-linear (it is only $k$-linear). Define $W_i:=\tau^i(L\La_a)$ for $i\geq 0$. This gives a flag of
$L$-subvector spaces $W_0\supset W_1\supset\cdots$. Since they all have finite dimension, there exists $j\geq 0$ such that
$W_j=W_{j+1}$. In particular, the $k$-vector space $W_j/W_{j+1}$ is trivial. But $\tau$ is $k$-linear and injective on $L\La_a$ and induces $k$-isomorphisms $W_i/W_{i+1}\rightarrow W_{i+1}/W_{i+2}$ for all $i$ including $i=0$. This means that $W_0=W_1$ and $\tau(L\La_a)
=L\La_a$. We conclude observing that $\tau$ extends to a $k$-automorphism of $L(\La_a)$.\end{proof}

\begin{cor}\label{cor:torsion}
We have, for $a\in A$, that $L\La_{a}=L\La_{\tau(a)}$ and $L(\La_a)=L(\La_{\tau(a)})$.
\end{cor}

\begin{proof}
By the first part of Lemma \ref{lemma:basicPropC(M)} we have that for $f\in\mathcal{M}$,
$C_a(f)=0$ if and only if $C_{\tau(a)}(\tau(f))=0$. Hence $$\tau(\La_a)=\La_{\tau(a)}.$$
By Lemma \ref{lemma:torsion} we deduce the equalities $L\La_a=L\tau(\La_a)=L\La_{\tau(a)}$
and $L(\La_a)=L(\La_{\tau(a)})$.
\end{proof}

We consider the equivalence relation $\sim$ on monic polynomials of $A$ defined by $a\sim b$ if and only if there exists $r\in\ZZ$ such that $a=\tau^r(b)$.
Let $a\in A$ be a monic non-constant polynomial, with factorization
$$a=\prod_{i\in I}P_i^{r_i},$$ where
the polynomials $P_i\in A$ are distinct, irreducible and monic, and the exponents $r_i$ are $>0$.
The set $\{P_i:i\in I\}$ is partitioned by $\sim$. Associated with this partition we have a unique decomposition in disjoint union $I=\sqcup_{j\in J}I_j$so that $\{P_i:i\in I\}=\sqcup_j\{P_i:i\in I_j\}$ and for all $j\in J$, the set $\{P_i:i\in I_j\}$ is an equivalence class for $\sim$. For all $j$, write $r_j=\max\{r_i:i\in I_j\}$. There exists $Q_j\in\{P_i:i\in I_j\}$ unique with the following property. For all $i\in I_j$ there exists $h_i\in\NN$ such that $\tau^{h_i}(P_i)=Q_j$. Note that if $P_i=Q_j$ then $h_i=0$.

We write
    \begin{equation}\label{eq:defHatA}
    \hat a=\prod_{j\in J}Q_j^{r_j}.
    \end{equation}
The polynomial $\hat{a}\in A$, monic, is uniquely determined. Note that $\hat{a}\mid a$. We also extend to the constant monic polynomial writing $\hat{1}=1$.

\begin{lemma}\label{lemma:TranscendenceDegree}
For all $a\in A$ and for any $L$ subfield of $\mathcal{M}$ containing $K$
such that $\tau(L)=L$
we have $L\La_a=L\La_{\hat{a}}$ and $L(\La_a)=L(\La_{\hat a})$.
\end{lemma}

\begin{proof}

In the notation introduced above, for each $i\in I$ there exist a unique
$j\in J$ and $h_i\in\NN$ such that $\tau^{h_i}(P_i)=Q_j$.
It follows from Corollary~\ref{cor:torsion} that
$L\La_{P_i^{r_i}}= L\La_{\tau^{h_i}(P_i^{r_i})}\subset L\La_{Q_j^{r_j}}$ (recall that if $a\mid b$ then $\La_a\subset\La_b$).

By Corollary \ref{cor:descriptionofthetorsion}, $\La_a=\oplus_i\La_{P_i^{r_i}}$ as $A$-submodules of $\mathcal{M}$.
Similarly, $\Lambda_{\hat a}=\oplus_{j\in J}\Lambda_{P_j^{r_j}}$. Hence we have the identities of $L$-subvector spaces of $\mathcal{M}$:
 $$
    L\Lambda_a=\sum_{i\in I}L\Lambda_{P_i^{r_i}}=\sum_{i\in I}L\Lambda_{\tau^{h_i}(P_i)^{r_i}}
    \subset\sum_{j\in J}L\Lambda_{P_j^{r_j}}=L\Lambda_{\hat a}\subset L\Lambda_{a},
 $$
 the first inclusion follows from the fact that $r_i\leq r_j$ for all $i\in I_j$ and the last inclusion is derived from the fact that $\hat{a}\mid a$. Hence $L\La_a=L\La_{\hat{a}}$ and
 $L(\Lambda_a)= L(\Lambda_{\hat a})$.
\end{proof}

\section{Torsion extensions and their difference Galois groups}
\label{sec:GaloisTheory}

Choose $a\in A$ monic. Recall that $\hat a$ is defined in \eqref{eq:defHatA}. The following is one of the crucial results of this paper:

\begin{thm}\label{thm:TrdegPV}
The field $K(\La_a)$ is a purely transcendental extension
of $K$ of transcendence degree $n$, where $n=\deg_t(\hat a)$.
More precisely, if $y_1,\dots, y_n\in\cM$ form a basis of $\La_{\hat a}$ over $k$,
then they also are a basis of transcendence of  $K(\La_a)$ over $K$.
\end{thm}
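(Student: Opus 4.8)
The plan is to recognise $K(\La_a)$ as a Picard-Vessiot extension of an explicit $\tau$-equation and to compute its transcendence degree as the dimension of the attached difference Galois group. By Lemma~\ref{lemma:TranscendenceDegree} we have $K(\La_a)=K(\La_{\hat a})$, so I would assume from the outset that $a=\hat a=\prod_{i=1}^{s}P_i^{r_i}$ with the $P_i$ monic, irreducible, in pairwise distinct $\tau$-orbits, and $n=\deg_t(a)$. Fixing a $k$-basis $y_1,\dots,y_n$ of $\La_a$ one has $K(\La_a)=K(y_1,\dots,y_n)$ since $k\subset K$, so it is enough to prove that $y_1,\dots,y_n$ are algebraically independent over $K$; this immediately gives that $K(\La_a)/K$ is purely transcendental of transcendence degree $n$ with the $y_i$ as a transcendence basis. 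Now $C_a\in K[\tau]$ has order $n$, leading coefficient $1$ and constant term $\gamma(a)\neq0$; by Praagman's Theorem~\ref{theorem-Praagman} the field $\cM$ is linearly $\tau$-closed and $\dim_k\La_a=n$, while $K(\La_a)$ is a $\tau$-field (Remark~\ref{rmk:torsion}) with the same $\tau$-constants $k$ as $K$, and the Casoratian $\Cas_\tau(y_1,\dots,y_n)$ is invertible (Lemma~\ref{lemma:tauwronskian}) with entries in $K(\La_a)$ (because $\tau(y_i)=\nu y_i-C_t(y_i)$ and $C_t(y_i)\in\La_a$) and solves the companion system~\eqref{eq:sys} of $C_a$. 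Hence $K(\La_a)/K$ is a Picard-Vessiot extension, so $\operatorname{trdeg}_K K(\La_a)=\dim_k G$ where $G$ is its difference Galois group, and it remains to prove $\dim_k G=n$.

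For the upper bound, note that every $g\in G$ fixes $K$ and therefore commutes with every operator $C_b$, $b\in A$ (the coefficients of $C_b$ lie in $k[\nu]\subset K$); thus $G$ acts $A$-linearly on $\La_a$, and using the isomorphism $\La_a\cong A/(a)$ of Proposition~\ref{prop:descriptionofthetorsion} one gets a closed embedding $G\hookrightarrow\operatorname{Aut}_A(\La_a)\cong(A/(a))^\times$ of algebraic groups over $k$. As $A/(a)$ is a commutative $k$-algebra of dimension $n$, its group of units is the complement of a hypersurface in $\A^n$, hence a connected $k$-group of dimension $n$; therefore $\dim_k G\le n$, and by connectedness the theorem becomes equivalent to $G=(A/(a))^\times$, i.e.\ to the reverse inequality $\dim_k G\ge n$.

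The lower bound is the crux, and I would obtain it from the decomposition $\La_a=\bigoplus_{i=1}^{s}\La_{P_i^{r_i}}$ of Corollary~\ref{cor:descriptionofthetorsion}: $K(\La_a)$ is the compositum of the $K(\La_{P_i^{r_i}})$, so $G$ embeds into $\prod_i G_i$ with $G_i:=\operatorname{Gal}(K(\La_{P_i^{r_i}})/K)$, surjecting onto each factor (each $K(\La_{P_i^{r_i}})/K$ being itself Picard-Vessiot). This splits the problem into two independent pieces. \emph{The prime-power case}, $\dim_k G_i=r_i\deg_t(P_i)$: by Proposition~\ref{prop:PowerOfIrriducibleTorsion}, $K(\La_{P^r})$ is generated over $K$ by the iterated derivatives $D^j(y_\ell)$, $0\le j\le r-1$, of a $k$-basis $y_1,\dots,y_d$ of $\La_P$ ($d=\deg_t P$); the case $r=1$ is exactly the algebraic independence of $y_1,\dots,y_d$ over $K$, equivalently $G_P=(A/(P))^\times$, which is the irreducible case --- a Carlitz-Hayes-type computation (Theorem~\ref{thm:Artinsymbol}) made explicit through the solutions $\sum_\ell\zeta_\ell^{\,j}\Gamma(\nu-\zeta_\ell)$ attached to the roots $\zeta_\ell$ of $P$ --- and the step from $K(\La_{P^{r-1}})$ to $K(\La_{P^r})$ asks that adjoining $D^{r-1}(y_1),\dots,D^{r-1}(y_d)$ raise the transcendence degree by exactly $d$, which one controls through the inhomogeneous relations $C_P(D^{r-1}(y_\ell))\in\La_{P^{r-1}}\subset K(\La_{P^{r-1}})$, an iterated Picard-Vessiot estimate in which the absence of collapse is where the essential singularities of the $\Gamma$-combinations (in the spirit of Nörlund) forbid unexpected relations. \emph{Linear disjointness}, $\dim_k G=\sum_i\dim_k G_i$, i.e.\ the $K(\La_{P_i^{r_i}})$ are mutually independent over $K$: since the $G_i$ are abelian this is not automatic, and one must rule out a non-trivial common sub-$\tau$-field of $K(\La_{P_i^{r_i}})$ and $K(\La_{P_j^{r_j}})$ for $i\neq j$; such a field would, via the Galois correspondence, match data carried by the $\tau$-orbits of $P_i$ and of $P_j$ and would force $\tau^h(P_i)$ to be associate to $P_j$ for some $h\in\ZZ$, contrary to $a=\hat a$ --- equivalently, a function in both fields would have to carry at once the essential singularities located along the $\tau$-orbit of the roots of $P_i$ and those along the disjoint orbit of the roots of $P_j$, hence lie in $K$. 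This is the difference-theoretic counterpart of the linear disjointness of distinct cyclotomic function fields in Hayes' theory.

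The decisive obstacle is thus concentrated in the lower bound: proving that the difference Galois group of an irreducible $P$ is the full unit group $(A/(P))^\times$ (the base case of the prime-power induction), and establishing the linear disjointness of the distinct-orbit pieces. The reductions and the upper bound are formal, and the successive layers of $D$-derivatives should be manageable once the inhomogeneous Picard-Vessiot bookkeeping is in place.
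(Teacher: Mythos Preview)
Your overall framing (upper bound via the $A$-module action, lower bound as the crux) is sound, but the proposal has a structural circularity and is missing the paper's actual mechanism for the lower bound. You invoke Theorem~\ref{thm:Artinsymbol} to handle the irreducible base case, but in this paper the logical order is the reverse: Theorem~\ref{thm:Artinsymbol} is \emph{deduced from} Theorem~\ref{thm:TrdegPV} (the proof of the surjectivity of $\Phi_a(S)$ uses that $m,C_t(m),\dots,C_{t^{n-1}}(m)$ are algebraically independent over $K$, which is precisely the content of Theorem~\ref{thm:TrdegPV}). So the reference ``a Carlitz--Hayes-type computation (Theorem~\ref{thm:Artinsymbol})'' is not available to you here, and the introduction explicitly warns that the ramification/class-field tools underpinning Hayes' argument have no counterpart in this setting.

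The paper's route to the lower bound is quite different from your sketch and does all the prime-power layers and the linear-disjointness simultaneously, via Proposition~\ref{prop:pinfty}. One extends the constants to a differentially closed field $\tilde{k}\supset k$, sets $\tilde{K}=\tilde{k}(\nu)$, and forms $F=S^{-1}(\tilde{k}\otimes_kL_{p^\infty})$; a sequence of $\tau$-simplicity lemmas (Lemma~\ref{lemma-subtle}) shows $F^\tau=\tilde{k}$. Over $\tilde{k}$ the operator $C_p$ factors as $\prod_{i,j}(C_t-\zeta_{i,j})$ with commuting factors, so it has eigenvectors $z_{i,j}\in\tilde{k}\otimes_k\La_p$ satisfying $\tau(z_{i,j})=(\nu-\zeta_{i,j})z_{i,j}$; $\tau$-simplicity makes each $z_{i,j}$ a unit, and the logarithmic derivatives $w_{i,j}=D(z_{i,j})/z_{i,j}$ satisfy additive equations $\tau(w_{i,j})-w_{i,j}=\frac{1-D(\zeta_{i,j})}{\nu-\zeta_{i,j}}$. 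One then applies Hardouin--Singer's criterion (Proposition~\ref{prop:HS}): a differential dependence among the $w_{i,j}$ would produce $g\in\tilde{K}$ with $\sum\lambda_{i,j,r}D^r\big(\frac{1-D(\zeta_{i,j})}{\nu-\zeta_{i,j}}\big)=\tau(g)-g$, and a partial-fractions pole analysis in $\tilde{K}=\tilde{k}(\nu)$ forces two poles of the left side to differ by a nonzero integer, contradicting the $\hat a$-condition that no two $\zeta_{i,j}$ are $\ZZ$-congruent. This single argument yields differential independence of a $k$-basis of $\La_p$ over $K$, and Proposition~\ref{prop:PowerOfIrriducibleTorsion} then converts that into algebraic independence of a $k$-basis of $\La_{\hat a}$. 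Your inductive Picard--Vessiot bookkeeping and the disjointness-via-Galois-correspondence step are thus bypassed; the places where you write ``essential singularities forbid unexpected relations'' and ``would force $\tau^h(P_i)$ associate to $P_j$'' are exactly where the paper substitutes the concrete Hardouin--Singer\,+\,pole argument, which is absent from your proposal.
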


\begin{rmk}
As a consequence of Theorem \ref{thm:TrdegPV}, given $P\in A$ irreducible, for all $r\geq0$ the natural map $K\otimes_k\La_{P^r}\rightarrow K\La_{P^r}$
is an isomorphism of $K$-vector spaces and for all $a\in A$, if $\hat{a}=\prod_jP_j^{r_j}$ with the polynomials $P_j$ irreducible and distinct,
$K\La_a=\oplus_jK\La_{P_j^{r_j}}$ as $K$-subvector spaces of $\mathcal{M}$.
\end{rmk}

To prove Theorem \ref{thm:TrdegPV} it suffices to consider $a$ with $a=\hat{a}$, indeed
in Lemma \ref{lemma:TranscendenceDegree} we proved that for any $a\in A$ monic the
fields $K(\La_a)$ and $K(\La_{\hat a})$ are equal.
By Theorem~\ref{praagman-theorem}, we can choose a $k$-basis $y_1,\dots,y_n$ of $\La_{\hat{a}}$ in $\cM$.
We consider the $K$-algebra
    \begin{equation}\label{eq:PV meromorfo}
      R_{\hat{a}}:=K\l[\tau^j(y_i),~i=1,\dots,n,~j=0,\dots, n-1\r]\!\l[\delta^{-1}\r],
    \end{equation}
where $\delta$ is the determinant of $\Cas_\tau(y_1,\ldots,y_n)$.
Using Lemma~\ref{lemma:torsion}, one sees that $R_{\hat{a}}=K[y_1,\ldots,y_n,\delta^{-1}]$ independently on the choice of the $k$-basis.

\begin{defn}
{\em We define the \emph{$\tau$-difference Galois group} of the extension $K(\La_{\hat{a}})$ over $K$ to be
the algebraic group scheme (see for instance \cite[Definition 2.4.3]{divizioDifferenceGaloisTheory2021}):
    $$
    \begin{array}{rcccl}
    G_{\hat{a}}:&\{\hbox{\rm commutative $k$-algebras}\} & \xrightarrow{\Phi_{\hat{a}}} &\{\hbox{\rm Groups}\},&\\ \\
    &S & \longmapsto & \hbox{Aut}^{\tau}_{S\otimes_k K}(S\otimes_k R_{\hat a})&
    \end{array}
    $$
where $\hbox{Aut}^{\tau}_{S\otimes_k K}(S\otimes_k R_{\hat a})$ is the group
of $S\otimes_k K$-algebra automorphisms of $S\otimes_k R_{\hat a}$ commuting with $\tau$, where $\tau$ is trivial on $S\otimes k$.}
\end{defn}

Notice that the $G_{\hat{a}}$ is a linear algebraic group
(see \cite[Thm.~1.13]{putGaloisTheoryDifference1997}, see also
\cite[\S2.7]{ovchinnikovGaloisTheoryLinear2014}, where
one can reduce to our situation supposing that $\sigma$
is the identity).
Since $k$ is not algebraically closed,
the group $G_a(k)$ can be  ``smaller than expected'' and for this reason we
use the language of group schemes.
The following statement on the structure of the Galois groups appearing in this theory is a consequence of Theorem~\ref{thm:TrdegPV}.
As explained already, it is a first step towards a difference class field theory using an approach
in the spirit of \cite{hayes1974explicit}:

\begin{thm}\label{thm:Artinsymbol}
The $\tau$-difference Galois group $G_{\hat{a}}$ is canonically isomorphic to the functor $\GG_m(A/(\hat{a}))$. In particular, $G_{\hat{a}}$ is commutative.
\end{thm}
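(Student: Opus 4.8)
The plan is to construct, naturally in the $k$-algebra $S$, a group isomorphism
\[
\rho_S\colon G_a(S)\ \xrightarrow{\ \sim\ }\ \bigl(S\otimes_k k[t]/(\hat a)\bigr)^{\times},
\]
and to note that it comes from an isomorphism of affine $k$-group schemes; commutativity of $G_a$ is then immediate from that of $k[t]/(\hat a)$. Fix a $k$-basis $y_1,\dots,y_n$ of $\La_{\hat a}$, with $n=\deg_t\hat a$, so that $R_{\hat a}=K[y_1,\dots,y_n,\delta_{\Cas}^{-1}]$; by Theorem~\ref{thm:TrdegPV} the $y_i$ are algebraically independent over $K$, hence $K[y_1,\dots,y_n]$ is a polynomial ring and $R_{\hat a}$ a localization of it. The argument rests on two facts: (i) each $C_b$ ($b\in A$) has coefficients in $k[\nu]\subseteq K$, so every $\tau$-equivariant $(S\otimes_k K)$-algebra automorphism of $S\otimes_k R_{\hat a}$ commutes with all $C_b$; and (ii) $\La_{\hat a}\cong A/(\hat a)$ as $A$-modules (Proposition~\ref{prop:descriptionofthetorsion}), a cyclic module whose $(S\otimes_k A)$-linear automorphisms are precisely the multiplications by units of $S\otimes_k A/(\hat a)$.

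First I would build $\rho_S$ and check injectivity. Let $\sigma\in G_a(S)$. Since $\sigma$ fixes $S\otimes_k K$ and commutes with $\tau$, it commutes with $C_{\hat a}$; a Casoratian argument in the spirit of Lemma~\ref{lemma:tauwronskian}, using $(S\otimes_k R_{\hat a})^{\tau}=S\otimes_k(R_{\hat a})^{\tau}=S$ (because $S$ is flat over the field $k$ and $(R_{\hat a})^{\tau}=R_{\hat a}\cap\cM^{\tau}=k$), shows that the solution set of $C_{\hat a}(\xi)=0$ in $S\otimes_k R_{\hat a}$ is exactly the free $S$-module $S\otimes_k\La_{\hat a}=\bigoplus_i S(1\otimes y_i)$. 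Hence $\sigma$ restricts to an $S$-automorphism of $S\otimes_k\La_{\hat a}$ which, by (i), also commutes with $C_t$ and so is $(S\otimes_k A)$-linear; by (ii) it is multiplication by a unit, which I take to be $\rho_S(\sigma)$. This is natural in $S$ and multiplicative. If $\rho_S(\sigma)=1$, then $\sigma$ fixes each $1\otimes y_i$, hence each $\tau^{j}(1\otimes y_i)$ and thus $\delta_{\Cas}$ and $\delta_{\Cas}^{-1}$; since these generate $S\otimes_k R_{\hat a}$ over $S\otimes_k K$, we get $\sigma=\mathrm{id}$.

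The heart of the matter is surjectivity. Given $u\in(S\otimes_k k[t]/(\hat a))^{\times}$, let $\mathbf u=(u_{ij})\in\GL_n(S)$ be its matrix on the basis $(1\otimes y_i)$ of $S\otimes_k\La_{\hat a}$. Because $K[y_1,\dots,y_n]$ is a polynomial ring — this is exactly where Theorem~\ref{thm:TrdegPV} is used — there is a unique $(S\otimes_k K)$-algebra endomorphism $\sigma$ of $S\otimes_k K[y_1,\dots,y_n]$ with $\sigma(1\otimes y_i)=\sum_j(u_{ij}\otimes1)(1\otimes y_j)$, and it is an automorphism since $\mathbf u$ is invertible. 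To see $\sigma$ commutes with $\tau$ it suffices to test on the generators: trivial on $S\otimes_k K$, and on $y_i$ one writes $\tau(y_i)=\nu y_i-C_t(y_i)$ (Remark~\ref{rmk:torsion}) with $C_t(y_i)=\sum_j\alpha_{ij}y_j$, $\alpha_{ij}\in k$ (as $C_t$ is $k$-linear and stabilizes $\La_{\hat a}$); a direct computation — the $\nu$-terms cancelling because $\nu$ is central and $\tau$ is $\tau$-semilinear — reduces $\sigma\tau(y_i)=\tau\sigma(y_i)$ to the matrix identity $\mathbf A\mathbf u=\mathbf u\mathbf A$, where $\mathbf A=(\alpha_{ij})$ is the matrix of $C_t$ on $\La_{\hat a}$. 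This holds precisely because $u$ commutes with the $t$-action, i.e. is $A$-linear. With $\sigma\tau=\tau\sigma$ in hand, expanding $\delta_{\Cas}=\det\bigl(\tau^{p-1}(y_q)\bigr)_{p,q}$ shows $\sigma(\delta_{\Cas})=\det(\mathbf u)\,\delta_{\Cas}$, a unit of $S\otimes_k R_{\hat a}$, so $\sigma$ extends uniquely to a $\tau$-equivariant automorphism of $S\otimes_k R_{\hat a}$, i.e. to an element of $G_a(S)$ with $\rho_S(\sigma)=u$.

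Finally, the $\rho_S$ assemble into an isomorphism of group-valued functors on $k$-algebras; both sides being affine group schemes and $\rho$ being given by $k$-algebra data, this is an isomorphism of affine $k$-group schemes, whence $G_a\cong(k[t]/(\hat a))^{\times}$ and $G_a$ is commutative. The step I expect to be the real obstacle is surjectivity, and within it the key point that for an automorphism prescribed via the algebraically independent generators $y_i$, $\tau$-compatibility is \emph{equivalent} to commuting with $C_t$, i.e. to $A$-linearity of $\mathbf u$ — this simultaneously caps $G_a$ by $(k[t]/(\hat a))^{\times}$ and shows every such unit is realized, and it is available only because Theorem~\ref{thm:TrdegPV} guarantees the $y_i$ are algebraically independent over $K$.
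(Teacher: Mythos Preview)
Your proof is correct and follows essentially the same strategy as the paper's: restrict $\sigma$ to the torsion module $S\otimes_k\La_{\hat a}$, identify its $A$-linear automorphisms with units of $S\otimes_k A/(\hat a)$ via cyclicity (Proposition~\ref{prop:descriptionofthetorsion}), and use Theorem~\ref{thm:TrdegPV} to lift any such unit to a $\tau$-equivariant automorphism of $S\otimes_k R_{\hat a}$. The only notable difference is packaging: the paper fixes a cyclic generator $m$ and works with the explicit basis $m,C_t(m),\dots,C_{t^{n-1}}(m)$, verifying $\tau$-compatibility by a direct computation on $m$ and checking basis-independence separately at the end, whereas you work with an arbitrary basis and phrase $\tau$-compatibility as the matrix identity $\mathbf A\mathbf u=\mathbf u\mathbf A$, which makes the equivalence between $\tau$-equivariance and $A$-linearity transparent and makes canonicity of $\rho_S$ automatic.
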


First of all we show that Theorem~\ref{thm:TrdegPV} implies Theorem~\ref{thm:Artinsymbol}. We will prove Theorem~\ref{thm:TrdegPV}
in the next section.

\begin{proof}[Proof of Theorem~\ref{thm:Artinsymbol} (assuming Theorem~\ref{thm:TrdegPV}).]
For any $k$-algebra $S$ we construct an isomorphism of groups:
    $$
    \begin{array}{rcccc}
    \Phi_{\hat{a}}(S):&G_{\hat{a}}(S) & \longrightarrow &\ds\l( \frac{S\otimes_kA}{(1\otimes \hat a)}\r)^\times
    &\ds\cong \l(S\otimes_k\frac{A}{(\hat a)} \r)^\times
    \end{array},
    $$
compatible with morphisms of $k$-algebras.
By Corollary~\ref{cor-generators}, there exists
$m\in\La_{\hat a}$ generating $\La_{\hat a}$ as an $A$-module, hence such that
    \begin{equation}\label{a-basis}
    m,C_t(m),\dots,C_{t^{n-1}}(m)
    \end{equation}
is a basis of $\La_{\hat a}$ over $k$.
By Theorem~\ref{thm:TrdegPV} these elements are also algebraically independent over $K$.
Lemma~\ref{lemma:tauwronskian} implies that $1\otimes m,1\otimes C_t(m),\dots,1\otimes C_{t^{n-1}}(m)$ is a basis
of $S\otimes_k\La_{\hat a}$ as an $S$-module. We extend the $k$-algebra map $C:A \to k[\nu][\tau]$ to a $S\otimes1$-algebra map $S\otimes_k A\rightarrow (S\otimes_k k[\nu])[\tau]$ by $S\otimes1$-linearity. To simplify the notation, we
denote it by $C$ again.
\par
If $\sg\in G_{\hat{a}}(S)$, then $\sg(1\otimes m)\in S\otimes_k\La_{\hat a}$ and there exist
$b_0,\dots, b_{n-1}\in S$ such that
$b_\sigma=\sum_{i=1}^{n-1}b_it^i\in S\otimes_k A$ satisfies:
    $$
    \sg(1\otimes m)=\sum_{i=0}^{n-1}b_iC_{t^i}(1\otimes m)=C_{b_\sigma}(1\otimes m),
    $$
Now we write:
    $$
    \Phi_{\hat{a}}(S)(\sg):=C_{b_\sg}\in (S\otimes_kk[\nu])[\tau].
    $$
This determines an injective group homomorphism
    $$
    G_{\hat{a}}(S)\xrightarrow{\Phi_{\hat{a}}(S)}\ds\l( \frac{S\otimes_kA}{(1\otimes\hat a)}\r)^\times.
    $$
To conclude, it suffices to show that $\Phi_{\hat{a}}(S)$ is also surjective.
Consider an element $b\in (S\otimes_kA/(1\otimes\hat a))^\times$
corresponding to the automorphism $C_b$ of \hbox{$S\otimes_k\La_{\hat{a}}$}.
Since $m,C_t(m),\dots,C_{t^{n-1}}(m)$ are algebraically independent over $K$ by Theorem \ref{thm:TrdegPV},
they are linearly independent and the assignements
    $$\sg_b(C_{t^i}(1\otimes m))=C_b\big(C_{t^i}(1\otimes m)\big)$$
for all $i$ extend uniquely to a $K$-automorphism of $K(\La_{\hat a})$. This automorphism commutes with $\tau$.
Indeed we have
    $$
    \begin{array}{rcl}
    \sg_b(\tau(m))&=&\sg_b(\nu m-C_t(m))=\nu C_b(m)-C_bC_t(m)\\~\\
    &=&\nu C_b(m)-C_t C_b(m)=(\nu-C_t)(C_b(m))=\tau(\sg_b(m)),
    \end{array}
    $$
and similar relations hold when one applies $\sigma_b$ to the other elements of the basis given by $m,C_t(m),\dots,C_{t^{n-1}}(m)$. This induces an automorphism of
$M:=K[m,C_t(m),\dots,C_{t^{n-1}}(m)]$ commuting with $\tau$ (defining an endomorphism of this $k$-algebra).

It remains to show that $\sigma_b(R_{\hat a})=R_{\hat a}$. By our choice of basis and \eqref{eq:sys},
$$\tau(\delta)=(-1)^n\hat{a}(\nu)\delta.$$
Hence
    $$
    \tau\big(\sg_b(\delta)\big)=\sg_b\big(\tau(\delta)\big)=(-1)^n\hat{a}(\nu)\sg_b\big(\delta\big).
    $$
Therefore,
$\tau(\frac{\sg_b(\delta)}{\delta})=\frac{\sg_b(\delta)}{\delta}$ and
$$\frac{\sg_b(\delta)}{\delta}\in k^\times.$$ Since $R_{\hat a}=M[\delta^{-1}]$ we conclude that $\sigma_b$ defines an automorphism of $R_{\hat a}$
which commutes with $\tau$. The functor $S\otimes_k-$ is flat and $\sg$ naturally defines an
automorphism of $S\otimes_kR_{\hat a}$, commuting with $\tau$.
This proves that $\Phi_a(S)$ is a group isomorphism.
\par
We have constructed $\Phi_{\hat{a}}$ making the choice of the $k$-basis (\ref{a-basis}). We are going to show that its definition is in fact independent on the choice of the basis and hence, that
$\Phi_{\hat{a}}$ is canonical.

Consider any $S$-basis of $S\otimes_k\La_{\hat a}$ with elements $y_1,\ldots,y_n$. We can find, for all $i=1,\ldots,n$,
$c_i=\sum_jc_{i,j} t^j\in S\otimes_kA$ such that $y_i=C_{c_i}(m)$. We can also suppose that $\deg_t(c_i)\leq n-1$ for all $i$.
If $\sg\in G_{\hat{a}}(S)$, by definition it commutes with $\tau$ and acts as the identity over $S\otimes_k K$.
Let $b\in S\otimes_kA$ be such that $\sg(m)=C_b(m)$. Then for all $i=1,\dots,n$ we have:
    \begin{multline*}
    \sg(y_i)=\sg\l(\sum_{j=0}^{d-1}c_{i,j}C_{t^j}(m)\r)=
    \sum_{j=0}^{d-1}c_{i,j}\sg\l(C_{t^j}(m)\r)=\\
    =\sum_{j=0}^{d-1}c_{i,j}C_{t^j}\l(\sg(m)\r)=\sum_{j=0}^{d-1}c_{i,j} C_b\big(C_{t^j}(m)\big)=
    C_b(y_i).
    \end{multline*}
This concludes the proof of the Theorem.
\end{proof}

\section{Proof of Theorem~\ref{thm:TrdegPV}}

In this section we keep considering $a\in A$ monic and we write $L_a$ for the field $K(\La_a)$ to simplify the notation. We recall that it coincides with $L_{\hat a}$.
Let $p\in A$ be the product of the irreducible factors of $\hat{a}$, so that if $\hat{a}=P_1^{r_1}\cdots P_d^{r_d}$ with the exponents $r_i$ that are $>0$, then
$p=P_1\cdots P_d$. We recall that for all $m\in\ZZ$, $P_i$ and $\tau^m(P_j)$ are relatively prime.
We set
        $$
        L_{p^\infty}:=K(\La_{P_i^{r}}; i=1,\dots, d,\,r\in\Z_{\geq 1})=\bigcup_{r\geq 1}K(\La_{p^{r}})
        $$
        (the second equality follows from Lemma \ref{lemma:coprime}).
This is the subfield of $\cM$ generated by $K$ and $\La_{P_i^r}$ for all $i$ and $r\geq 1$.
It follows from Proposition~\ref{prop:PowerOfIrriducibleTorsion} that $L_{p^\infty}$ is stable by both $\tau$ and $D$.
By construction we have $L_{\hat{a}}\subset L_{p^\infty}$.
Our purpose is to prove the differential independence of a set of generators of $\La_p$,
which implies Theorem~\ref{thm:TrdegPV}:

\begin{prop}\label{prop:pinfty}
In the notation above, for all $i=1,\ldots,d$ we consider a basis $y_{i,1},\dots,y_{i,n_i}$ of $\La_{P_i}$ over $k$,
where $n_i=\deg_t(P_i)$.
The set
    $$
    \l\{D^h(y_{i,j}); i=1,\dots,d,\, j=1,\dots,n_i,\,h\geq 0\r\}
    $$
is algebraically independent over $K$ and is a basis of transcendence of $L_{p^\infty}$ over $K$.
\end{prop}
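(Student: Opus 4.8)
The plan is to reduce the algebraic independence over $K$ to the already-established transcendence-degree statement of Theorem~\ref{thm:TrdegPV} for the finite-level torsion, combined with the explicit description of $\La_{P^r}$ via iterated derivatives given in Proposition~\ref{prop:PowerOfIrriducibleTorsion}. First I would observe that, by Corollary~\ref{cor:descriptionofthetorsion} and Lemma~\ref{lemma:TranscendenceDegree}, for each $r\geq 1$ the field $K(\La_{p^r})$ coincides with the compositum of the $K(\La_{P_i^r})$, and by Proposition~\ref{prop:PowerOfIrriducibleTorsion} a $k$-basis of $\La_{P_i^r}$ is given by $\{D^h(y_{i,j}) : j=1,\dots,n_i,\ h=0,\dots,r-1\}$. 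Hence the set in the statement is exactly $\bigcup_{r\geq 1}\bigcup_i (\text{$k$-basis of }\La_{P_i^r})$, and it spans $L_{p^\infty}=\bigcup_r K(\La_{p^r})$ as a field over $K$ by definition. So the content is the algebraic independence, and since algebraic independence is a finitary condition, it suffices to prove it at each finite level $r$: that the $rn$ functions $D^h(y_{i,j})$ with $h\leq r-1$ are algebraically independent over $K$, where $n=\deg_t(p)=\sum_i n_i$.

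The key step is then to identify this with Theorem~\ref{thm:TrdegPV} applied to the polynomial $p^r$. Here I must be a little careful about the ``hat'' operation: I would first arrange, using the freedom in choosing the orbit representatives $P_i$, that the $P_i$ appearing in $p$ are pairwise $\tau$-inequivalent, which is precisely the standing hypothesis recalled at the start of this section ($P_i$ and $\tau^m(P_j)$ relatively prime for $i\neq j$). With that normalization $\widehat{p^r}=p^r$, so $\deg_t(\widehat{p^r})=rn$, and Theorem~\ref{thm:TrdegPV} says $K(\La_{p^r})$ is purely transcendental over $K$ of transcendence degree $rn$, with \emph{any} $k$-basis of $\La_{p^r}=\La_{\widehat{p^r}}$ serving as a transcendence basis. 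Applying this with the specific $k$-basis $\{D^h(y_{i,j})\}_{h\leq r-1}$ furnished by Proposition~\ref{prop:PowerOfIrriducibleTorsion} (which has the right cardinality $rn$ by the corollary $\La_{p^r}\cong\oplus_i\La_{P_i^r}$ and the dimension count $\dim_k\La_{P_i^r}=rn_i$) gives exactly the algebraic independence over $K$ of these $rn$ functions. Letting $r\to\infty$ yields algebraic independence of the whole infinite set, and combined with the spanning remark this shows it is a transcendence basis of $L_{p^\infty}$ over $K$.

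The main obstacle I anticipate is bookkeeping rather than a deep difficulty: one must verify that the collection $\{D^h(y_{i,j})\}$ across all $i$ (not just a single irreducible $P$) is genuinely a $k$-basis of $\La_{p^r}$, which requires combining Proposition~\ref{prop:PowerOfIrriducibleTorsion} (one $P$ at a time) with the direct-sum decomposition of Corollary~\ref{cor:descriptionofthetorsion}; and one must make sure the normalization making $\widehat{p^r}=p^r$ is legitimate, i.e.\ that replacing each $\tau$-orbit by a single representative does not change $L_{p^\infty}$ — but this is Lemma~\ref{lemma:TranscendenceDegree} together with $\Lambda_{\tau^h(P_i^{r})}=\tau^h(\Lambda_{P_i^{r}})$ and the $\tau$-stability of $L_{p^\infty}$. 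A secondary point is that Theorem~\ref{thm:TrdegPV} is being invoked for every $r$, so logically this proposition is a \emph{corollary} of Theorem~\ref{thm:TrdegPV} in the special case $a=p^r$; if instead one intends Proposition~\ref{prop:pinfty} to be the vehicle for \emph{proving} Theorem~\ref{thm:TrdegPV}, then the roles must be inverted and the real work — presumably a Picard--Vessiot / difference-Galois computation bounding the transcendence degree from above, matched against the lower bound $rn$ coming from $\dim_k\La_{p^r}$ and the Casoratian — has to be done here directly at each finite level, with the $D^h$ structure of Proposition~\ref{prop:PowerOfIrriducibleTorsion} feeding the inductive step in $r$.
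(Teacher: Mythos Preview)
Your argument is circular. In the paper, Proposition~\ref{prop:pinfty} is not a corollary of Theorem~\ref{thm:TrdegPV}; it is the statement from which Theorem~\ref{thm:TrdegPV} is \emph{deduced} (the paper says explicitly ``Proposition~\ref{prop:pinfty} implies Theorem~\ref{thm:TrdegPV}'' right after its statement). So invoking Theorem~\ref{thm:TrdegPV} for $a=p^r$ to obtain the algebraic independence of the $D^h(y_{i,j})$ is exactly what must be proved here. You flag this possibility at the end of your proposal, but you do not provide an independent argument; the sentence ``the real work \dots\ has to be done here directly'' is the whole content of the proposition, and it is missing.

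The paper's actual proof is quite different from anything you sketch. It extends scalars from $k$ to a differentially closed field $\tilde k$, sets $\tilde K=\tilde k(\nu)$ and builds a $(\tau,D)$-ring $F=S^{-1}(\tilde k\otimes_k L_{p^\infty})$ with $F^\tau=\tilde k$ (this is a separate lemma requiring $\tau$-simplicity arguments). Over $\tilde k$ the operator $C_p$ factors as $\prod_{i,j}(C_t-\zeta_{i,j})$ with the $\zeta_{i,j}$ the roots of $p$, so one finds eigenvectors $z_{i,j}\in\tilde k\otimes_k\La_p$ satisfying $\tau(z_{i,j})=(\nu-\zeta_{i,j})z_{i,j}$. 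These are units of $F$, and their logarithmic derivatives $w_{i,j}=D(z_{i,j})/z_{i,j}$ satisfy first-order additive equations $\tau(w_{i,j})-w_{i,j}=(1-D(\zeta_{i,j}))/(\nu-\zeta_{i,j})$. One then applies the Hardouin--Singer criterion (\cite[Prop.~3.1]{hardouinDifferentialGaloisTheory2008}) and rules out the resulting telescoping identity by an elementary pole-counting argument in $\tilde K=\tilde k(\nu)$, using that the $\zeta_{i,j}$ are pairwise distinct modulo $\ZZ$ (this is where the $\tau$-inequivalence of the $P_i$ is used). Differential independence of the $w_{i,j}$ gives that of the $z_{i,j}$, hence of the $y_{i,j}$, over $\tilde K$ and a fortiori over $K$. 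None of this is a ``transcendence degree from above versus below'' bookkeeping; the substance is the scalar extension plus diagonalization plus the Hardouin--Singer pole argument.
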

Proposition~\ref{prop:pinfty} implies Theorem~\ref{thm:TrdegPV} because the set
        $$
        \l\{D^h(y_{i,j}); i=1,\dots,d,\, j=1,\dots,n_i,\,h=0,\dots,r_i-1\r\}
        $$
generates $L_{\hat{a}}$ over $K$.

\begin{proof}[Proof of Proposition~\ref{prop:pinfty}.]
By \cite[Definition~1.7 and Theorem~2.2]{woodDifferentiallyClosedFields1998} there exists a differentially closed field
extension $\tilde{k}$ of $k$ with respect to $D=\frac{d}{d\nu}$. We refer to this work for the precise definition of
differentially closed field, but it roughly means that every algebraic differential equation with coefficients in $k$
that has a non-trivial solution in a given $k$-algebra equipped with an extension of $D$ also has a non-trivial solution in $\tilde{k}$.
In particular, $\tilde{k}$ is algebraically closed. Note that $\nu$ being solution of $D(x)=1$, there are embeddings of $K$ in $\tilde{k}$.
\par
We need to construct a $\tau$-field extension $\tilde{K}/K$, equipped with a derivation $D$ commuting with $\tau$ and such that $\tilde{k}=\tilde{K}^\tau$,
and a $\tilde{K}$-algebra $F$ also equipped with a commuting extension of $\tau$ and $D$, such that
$\tilde{k}=F^\tau$, in which the solutions $D^h(y_{i,j})$ make sense, so that we can apply the following proposition to
a convenient choice of $\tau$-equations that we will construct later:

\begin{prop}[{\cite[Prop.~3.1]{hardouinDifferentialGaloisTheory2008}. See also \cite[Theorem~4.2]{hardouincompositio}}]\label{prop:HS}
Let $a_1,\dots,a_n\in\tilde{K}$ and $\omega_1,\dots,\omega_n\in F$ be satisfying
$\tau(\omega_i)-\omega_i=a_i$, for all $i = 1,\dots, n$.
Then $\omega_1,\dots,\omega_n$ are differentially dependent over $\tilde{K}$ if and only if there exists a nonzero
homogeneous linear differential polynomial $L(Y_1,\dots, Y_n)$ with coefficients in $\tilde{k}$ and
an element $g\in\tilde{K}$ such that
    \[
    L(a_1,\dots, a_n) = \tau(g)-g.
    \]
\end{prop}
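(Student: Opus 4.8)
The plan is to recast the two conditions in the statement as a single cleaner equivalence, dispatch the easy direction by a telescoping computation, and obtain the nonzero $L$ in the hard direction from the structure of a parameterized difference Galois group. The starting observation is the following telescoping identity. For a homogeneous linear differential polynomial $L(Y_1,\dots,Y_n)=\sum_{i,j}c_{i,j}D^j(Y_i)$ with $c_{i,j}\in\tilde{k}$, the facts that $\tau$ fixes $\tilde{k}$ pointwise and commutes with $D$ give $\tau(L(z_1,\dots,z_n))-L(z_1,\dots,z_n)=L(\tau(z_1)-z_1,\dots,\tau(z_n)-z_n)=L(a_1,\dots,a_n)$. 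Hence, for a fixed $L$, the condition ``$L(a_1,\dots,a_n)=\tau(g)-g$ for some $g\in\tilde{K}$'' is equivalent to ``$L(z_1,\dots,z_n)\in\tilde{K}$'': if $L(z)\in\tilde{K}$ take $g=L(z)$, while if $L(a)=\tau(g)-g$ then $\tau(L(z)-g)=L(z)-g$, so $L(z)-g\in F^\tau=\tilde{k}\subset\tilde{K}$. Thus the proposition is equivalent to the assertion that $z_1,\dots,z_n$ are differentially dependent over $\tilde{K}$ if and only if there is a nonzero homogeneous linear differential polynomial $L$ over $\tilde{k}$ with $L(z_1,\dots,z_n)\in\tilde{K}$. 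In this formulation the implication ``$\Leftarrow$'' is immediate, since $L(z)=g\in\tilde{K}$ with $L\neq0$ is itself a nontrivial differential relation over $\tilde{K}$.

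For the converse I would invoke the parameterized (that is, $D$-)Galois theory of difference equations, which is available precisely because $\tilde{k}$ is differentially closed. Writing the scalar equations $\tau(z_i)-z_i=a_i$ as one inhomogeneous $\tau$-system, the $\tilde{K}$-algebra generated by the $z_i$ inside $F$ and stable under $D$ and $\tau$ is a parameterized Picard--Vessiot ring over $\tilde{K}$ (the constants do not grow, as $F^\tau=\tilde{k}$). Its Galois group $G$ is a linear differential algebraic group over $\tilde{k}$. The decisive structural point is that, for any $\sigma\in G$, one has $\tau(\sigma(z_i)-z_i)=\sigma(z_i)-z_i$, so $\sigma(z_i)-z_i\in F^\tau=\tilde{k}$; the assignment $\sigma\mapsto(\sigma(z_1)-z_1,\dots,\sigma(z_n)-z_n)$ therefore embeds $G$ as a differential algebraic subgroup of $\GG_a^n$ over $\tilde{k}$.

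Now I would apply Cassidy's classification of differential algebraic subgroups of $\GG_a^n$: every such $G$ is the zero locus of a system of \emph{homogeneous linear} differential equations with coefficients in $\tilde{k}$. By the fundamental theorem of parameterized Galois theory the differential transcendence degree of $\tilde{K}\langle z_1,\dots,z_n\rangle$ over $\tilde{K}$ equals the differential dimension of $G$, so differential dependence of the $z_i$ forces $G\subsetneq\GG_a^n$, whence at least one defining equation $L$ is nonzero. Finally $L(z_1,\dots,z_n)$ is $G$-invariant: for $\sigma\in G$ with parameter $c=(c_1,\dots,c_n)$ one computes $\sigma(L(z))=L(z+c)=L(z)+L(c)=L(z)$, since $L(c)=0$ is exactly the membership $c\in G$ and $L$ is linear. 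By the Galois correspondence the $G$-invariants coincide with $\tilde{K}$, so $L(z)\in\tilde{K}$, and the telescoping identity of the first paragraph converts this into $L(a_1,\dots,a_n)=\tau(g)-g$ with $g=L(z)$. I expect the main obstacle to be entirely on this hard direction: constructing the parameterized Picard--Vessiot extension rigorously (checking that the constants remain $\tilde{k}$ and the relevant $\tau$-simplicity) and confirming that the Galois group genuinely lands inside $\GG_a^n$, so that Cassidy's theorem applies; once $G$ is identified as a $D$-subgroup of $\GG_a^n$, the linearity of its defining equations is exactly what yields the homogeneous linear $L$ required by the statement.
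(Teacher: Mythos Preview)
The paper does not supply its own proof of this proposition: it is quoted verbatim from \cite[Prop.~3.1]{hardouinDifferentialGaloisTheory2008} (with a cross-reference to \cite[Theorem~4.2]{hardouincompositio}) and used as a black box in the proof of Proposition~\ref{prop:pinfty}. So there is nothing in the paper to compare your argument against line by line.

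That said, your sketch is correct and is precisely the strategy of the original Hardouin--Singer proof. The telescoping reduction to ``$L(z_1,\dots,z_n)\in\tilde{K}$'' is the standard first step; the hard direction is handled exactly as you describe, via the parameterized Picard--Vessiot theory available over the differentially closed $\tilde{k}$, the identification of the $(\tau,D)$-Galois group as a differential algebraic subgroup of $\GG_a^n$, Cassidy's theorem that such subgroups are zero sets of homogeneous linear differential polynomials, and the Galois correspondence to force $L(z)\in\tilde{K}$. Your caveats are well placed: the genuine work lies in setting up the PPV ring inside $F$ (checking $\tau$-simplicity and that no new $\tau$-constants appear, which in the paper's context is exactly the content of Lemma~\ref{lemma-subtle}), and in invoking the correct version of the fundamental theorem so that differential dependence of the $z_i$ translates into $G\subsetneq\GG_a^n$.
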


We construct $\tilde{K}$ as follows. We consider the tensor product $\tilde{k}\otimes_k K$, on which
we define the action of $\tau$ as $\operatorname{Id}_{\tilde{k}}\otimes\tau$, where $\operatorname{Id}_{\tilde{k}}$ is the identity on $\tilde{k}$. Note that $\tilde{k}\otimes_k K$ is a domain (in general, if $L,L'$ are field extensions of $F$ and $L'$ is purely transcendental over $F$, then $L\otimes_FL'$ is a domain). In particular the field of fraction of $\tilde{k}\otimes_k K$, well defined, is isomorphic to $\tilde{k}(x)$ with $x$ an indeterminate that corresponds to $1\otimes\nu$.

To simplify our notation in what follows, we denote by $\nu$ the element $1\otimes \nu$
and we set $$\tilde{K}:=\tilde{k}(\nu),$$
a field of transcendence degree one over $\tilde{k}$. This should be written $(\tilde{k}\otimes1)(1\otimes\nu)$ but in the following we never use the copies of $K$ existing in $\tilde{k}\otimes1$ so we can relax notations in the way indicated above.
\par
We consider the multiplicative subset
$S=\tilde{k}[\nu]\smallsetminus\{0\}$, so that $\tilde{K}$ is isomorphic to $S^{-1}(\tilde{k}\otimes_k K)$ (in fact, it is also isomorphic to $S^{-1}(\tilde{k}\otimes_kk[\nu])$).
The embedding of $k$-algebras $K\hookrightarrow L_{p^\infty}$ induces an embedding
$$\tilde{k}\otimes_k K\hookrightarrow\tilde{k}\otimes_k L_{p^\infty},$$ extending $1\otimes K\rightarrow1\otimes L_{p^\infty}$
(but $\tilde{k}\otimes_k L_{p^\infty}$ is not necessarily a domain).
In particular the image of the multiplicative subset $S$ via this embedding in $\tilde{k}\otimes_k L_{p^\infty}$ is a multiplicative subset
and it makes sense to consider the $\tilde{K}$-algebra $$F:=S^{-1}(\tilde{k}\otimes_k L_{p^\infty}).$$
We now draw a preliminary picture of the structure of $F$, quite subtle, in view of the results we are targeting. We are going to make use of some technical results contained in \cite{ovchinnikovGaloisTheoryLinear2014}.

\begin{lemma}\label{lemma-subtle}
The $\tilde{K}$-algebra $F$ is equipped with commuting extensions of $\tau$ and $D$ such that $F^\tau=\tilde{k}$.
\end{lemma}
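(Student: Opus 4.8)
The plan is to equip $R:=\tilde k\otimes_k L_{p^\infty}$ with the two operators first, and then transfer the structure to the localization $F=S^{-1}R$, keeping careful track of $\tau$-invariants at each stage. Recall from the discussion preceding Proposition~\ref{prop:pinfty} that $L_{p^\infty}\subseteq\cM$ is stable under both $\tau$ and $D=\frac{d}{d\nu}$, that these two operators commute on it, and that $L_{p^\infty}^\tau=\cM^\tau\cap L_{p^\infty}=k$, since $k\subseteq L_{p^\infty}\subseteq\cM$.

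I would first extend the operators to $R$. One sets $\tau:=\operatorname{Id}_{\tilde k}\otimes\tau$, and defines $D$ on $R$ by the Leibniz rule $D(x\otimes\ell)=D(x)\otimes\ell+x\otimes D(\ell)$; the latter is well defined precisely because the derivation of $\tilde k$ and the derivation $\frac{d}{d\nu}$ of $L_{p^\infty}$ restrict to the same derivation of $k$. A check on pure tensors gives $D\tau=\tau D$ on $R$: as $\tau$ is trivial on $\tilde k$, both composites send $x\otimes\ell$ to $D(x)\otimes\tau(\ell)+x\otimes D(\tau(\ell))$, using that $D$ and $\tau$ commute on $L_{p^\infty}$. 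Choosing a $k$-basis $(e_i)_i$ of $\tilde k$ and writing $R=\bigoplus_i e_i\otimes L_{p^\infty}$, an element $\sum_i e_i\otimes\ell_i$ is fixed by $\tau$ if and only if each $\ell_i\in L_{p^\infty}^\tau=k$, whence $R^\tau=\tilde k\otimes_k k=\tilde k$ (this also follows, as for $\tilde k\otimes_k K$, from the base-change lemma \cite[Lemma~2.10]{ovchinnikovGaloisTheoryLinear2014} applied to the $\tau$-field $L_{p^\infty}$).

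I would then pass to $F=S^{-1}R$. Since $\tilde k\otimes_k K$ is a domain and $L_{p^\infty}$ is a free $K$-module, $R$ is a free $\tilde k\otimes_k K$-module, hence torsion-free over this domain; in particular every element of $S=\tilde k[\nu]\smallsetminus\{0\}\subseteq\tilde k\otimes_k K$ is a non-zero-divisor of $R$, so $R\hookrightarrow F$ and $F\supseteq\tilde K=S^{-1}(\tilde k\otimes_k K)$. Because $\tau$ fixes $\tilde k$ and sends $\nu$ to $\nu+1$, one has $\tau(S)\subseteq S$, so $\tau$ extends to $F$ by $\tau(x/s)=\tau(x)/\tau(s)$, and $D$ extends to $F$ by the quotient rule; the identity $D\tau=\tau D$ is inherited from $R$. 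To finish, I would show $F^\tau=\tilde k$ as follows: given $z\in F$ with $\tau(z)=z$, the set $J=\{\,p\in\tilde k[\nu]:pz\in R\,\}$ is a nonzero ideal of the principal ideal domain $\tilde k[\nu]$, and it is $\tau$-stable because $pz\in R$ forces $\tau(p)z=\tau(pz)\in R$. If $g$ denotes the monic generator of $J$, then $\tau(g)\in(g)$, and since $\tau$ fixes $\tilde k$ and preserves degrees this forces $\tau(g)=g$; hence $g$ is a constant polynomial, $J=\tilde k[\nu]$, so $z\in R$ and therefore $z\in R^\tau=\tilde k$.

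The routine parts here are the extensions of $\tau$ and $D$, their commutation, and the freeness of $R$ over $\tilde k\otimes_k K$ — standard base-change and localization manipulations. The one point that really needs care, and the main obstacle, is the equality $F^\tau=\tilde k$: a priori, inverting the elements of $S$ could manufacture new $\tau$-invariants, and the reason this does not happen is the rigidity of the $\tau$-action on $\tilde k[\nu]$ — no nonconstant polynomial is invariant under $\nu\mapsto\nu+1$, so a nonzero $\tau$-stable ideal of $\tilde k[\nu]$ has to be the whole ring.
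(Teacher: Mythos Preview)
Your argument is correct, and it is genuinely different from the paper's. The paper proves $F^\tau=\tilde k$ by working at each finite level $R_{p^n}=K[\La_{p^n},\delta_n^{-1}]$: it invokes several results from \cite{ovchinnikovGaloisTheoryLinear2014} to show that $R_{p^n}$, then $\tilde k\otimes_k R_{p^n}$, and finally $S^{-1}(\tilde k\otimes_k R_{p^n})$ are $\tau$-simple, and deduces from this that the $\tau$-constants of the last ring form an algebraic extension of $\tilde k$, hence equal $\tilde k$; the conclusion for $F$ follows by taking the union over $n$. Your route is considerably more elementary: you compute $R^\tau=\tilde k$ directly from a $k$-basis of $\tilde k$ and the fact that $L_{p^\infty}^\tau=k$, and then handle the localization with the denominator-ideal trick in the PID $\tilde k[\nu]$, the punchline being that a nonzero $\tau$-stable ideal of $\tilde k[\nu]$ has monic generator $g$ with $\tau(g)=g$, which in characteristic zero forces $g=1$. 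This avoids the $\tau$-simplicity machinery entirely and is self-contained. What the paper's approach buys in exchange is the intermediate statement that the rings $S^{-1}(\tilde k\otimes_k R_{p^n})$ are $\tau$-simple, which is of independent interest in the Picard--Vessiot framework even if not used again in the present argument; your approach trades that structural information for brevity.
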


\begin{proof}
The operator $\tau$ is defined on $\tilde{k}\otimes_k L_{p^\infty}$ as $\operatorname{Id}_{\tilde{k}}\otimes\tau$.
Since $D=\frac{d}{d\nu}$ acts on both $\tilde{k}$ and  $L_{p^\infty}$, a derivation that we still call $D$ is defined on $\tilde{k}\otimes_k L_{p^\infty}$
using the Leibniz rule, namely $D(f\otimes m)=D(f)\otimes m+f\otimes D(m)$,
for any $f\in\tilde{k}$ and $m\in L_{p^\infty}$.
These operators extend in a unique way to $F$ and one checks that they commute.
\par
We need to prove that $F^\tau=\tilde{k}$. Fix an integer $n\geq1$.
We recall that $L_{p^n}=K(\La_{p^n})$, so that $L_{p^\infty}=\cup_{n\geq 1}L_{p^n}$.
We denote by $\delta_n$ the determinant of a Casorati matrix associated to a chosen basis of $\La_{p^n}$.
The ring
    $$R_{p^n}:=K[\La_{p^n},\delta_n^{-1}]\subset\mathcal{M},$$
already considered in \eqref{eq:PV meromorfo},
is a finitely generated $K$-algebra contained in $L_{p^n}$. It is also stable by the action of $\tau$ and hence a $\tau$-ring.
Moreover $k\subset R_{p^n}^\tau\subset L_{p^n}^\tau=k$ and $R_{p^n}$ is obviously a domain.
By \cite[Prop.~2.14]{ovchinnikovGaloisTheoryLinear2014} we have that $R_{p^n}$ is $\tau$-simple.
By \cite[Lemma~2.9]{ovchinnikovGaloisTheoryLinear2014} we also verify that $\tilde{k}\otimes_kR_{p^n}$ is $\tau$-simple.
Finally applying \cite[Lemma~2.7]{ovchinnikovGaloisTheoryLinear2014} we obtain that the field of constants of the
total quotient ring of $\tilde{k}\otimes_kR_{p^n}$
satisfies:
  \begin{equation}\label{intermediate-id}
    \operatorname{Quot}(\tilde{k}\otimes_kR_{p^n})^\tau=(\tilde{k}\otimes_kR_{p^n})^\tau.
  \end{equation}
We claim that:
    \begin{equation}\label{eq:claim}
    \hbox{$S^{-1}(\tilde{k}\otimes_kR_{p^n})$ is a $\tau$-simple finitely generated $\tilde{K}$-algebra.}
    \end{equation}
Assume the claim verified for a moment. Then \cite[Lemma~2.13]{ovchinnikovGaloisTheoryLinear2014} ensures that
$\big(S^{-1}(\tilde{k}\otimes_kR_{p^n})\big)^\tau$ is an algebraic extension of $\tilde{k}$, which is in particular algebraically closed, being differentially closed.
Therefore $\big(S^{-1}(\tilde{k}\otimes_kR_{p^n})\big)^\tau=\tilde{k}$ for all $n\geq 1$. This implies that
$\big(S^{-1}(\tilde{k}\otimes_kL_{p^n})\big)^\tau=\tilde{k}$.
Indeed observe:
$$\tilde{k}\otimes_kR_{p^n}\subset S^{-1}(\tilde{k}\otimes_kR_{p^n})\subset S^{-1}(\tilde{k}\otimes_kL_{p^n})\subset\operatorname{Quot}(\tilde{k}\otimes_kR_{p^n}).$$
Computing $\tau$-constants, we obtain
$$(\tilde{k}\otimes_kR_{p^n})^{\tau}\subset \tilde{k}=\big(S^{-1}(\tilde{k}\otimes_kR_{p^n})\big)^\tau\subset \big(S^{-1}(\tilde{k}\otimes_kL_{p^n})\big)^{\tau}\subset\operatorname{Quot}(\tilde{k}\otimes_kR_{p^n})^\tau.$$
By (\ref{intermediate-id}) we get
$$\operatorname{Quot}(\tilde{k}\otimes_kR_{p^n})^\tau=(\tilde{k}\otimes_kR_{p^n})^{\tau}=\tilde{k}.$$
Now consider $\frac{f}{g}\in F^\tau=(S^{-1}(\tilde{k}\otimes_kL_{p^\infty}))^\tau$, with $g\in S$ and $f\in \tilde{k}\otimes_kL_{p^n}$ for a certain choice of $n\geq 1$.
Then $\frac{f}{g}\in (S^{-1}(\tilde{k}\otimes_kL_{p^n}))^\tau=\tilde{k}$. Therefore $F^\tau=\tilde{k}$ and would be the conclusion of the proof.

We are left with proving the claim \eqref{eq:claim}. All we need to prove is that $\big(S^{-1}(\tilde{k}\otimes_kR_{p^n})\big)$ is $\tau$-simple and this can be deduced from \cite[Proposition 3.11]{atiyahIntroductionCommutativeAlgebra1969}, from which we also borrow the terminology.
Consider the localization map
$$\tilde{k}\otimes_kR_{p^n}\rightarrow S^{-1}(\tilde{k}\otimes_kR_{p^n}),$$
injective by the fact that $S\subset \tilde{K}$ is constituted by nonzero divisors. Every ideal $\tilde{I}$ of $S^{-1}(\tilde{k}\otimes_kR_{p^n})$ is extension of an ideal $I$ of $\tilde{k}\otimes_kR_{p^n}$, $\tilde{I}=S^{-1}I$, as explained in ibidem. But every ideal is contained in the contraction of its extension, therefore $I\subset\tilde{I}\cap(\tilde{k}\otimes_kR_{p^n})$. Suppose that $\tilde{I}$ is a $\tau$-ideal. Then the contraction ideal $\tilde{I}\cap(\tilde{k}\otimes_kR_{p^n})$ is a $\tau$-invariant ideal of $\tilde{k}\otimes_kR_{p^n}$ which is proven to be $\tau$-simple (see the above). If $\tilde{I}\cap(\tilde{k}\otimes_kR_{p^n})=(0)$ then $I=(0)$ and
$\tilde{I}=(0)$. Otherwise $\tilde{I}$ contains $\tilde{k}\otimes_kR_{p^n}$ which means that it equals $S^{-1}(\tilde{k}\otimes_kR_{p^n})$.
\end{proof}

We go back to the polynomial $p=P_1\cdots P_d\in A$.
We look at $C_p$ as an operator in $\tilde K[\tau]$ where $\tau$ acts $(\tilde{k}\otimes1)$-linearly. Recall that to simplify the writing, we denote by $\zeta$ any element of the form $\zeta\otimes1$ with $\zeta\in\tilde{k}$, and by $\nu$ the element $1\otimes\nu$.
For  $i=1,\dots,d$ and $j=1,\dots,n_i$, if $\zeta_{i,j}$ are the roots of $P_i$ in $\tilde k$, then $C_p$ can be factorized as
    $$
    C_p=\prod_{i,j}(C_t-\zeta_{i,j}).
    $$
The key-point here that the $\zeta_{i,j}$ commute with the operator $C_t$, since $C_t$ is $(\tilde{k}\otimes1)$-linear
like $\tau$. Therefore
the factors $C_t-\zeta_{i,j}$ commute pairwise and seeing them as endomorphisms of $\tilde{k}\otimes_k\La_p$, they are simultaneously diagonalizable (this can be viewed easily from the fact that $C_p$ is zero on $\tilde{k}\otimes_k\La_p$).  It follows that for any $i=1,\dots,d$ and $j=1,\dots,n_i$ there exists an eigenvector
$\omega_{i,j}\in \tilde{k}\otimes_k\La_p$ such that $\omega_{i,j}$ satisfies $C_t(\omega_{i,j})=\zeta_{i,j}\omega_{i,j}$, or equivalently $\tau(\omega_{i,j})=(\nu-\zeta_{i,j})\omega_{i,j}$.
The above commutativity property implies that the $\omega_{i,j}$ and the $y_{i,j}$ both define bases of $\tilde{k}\otimes_k\La_p$ over $\tilde{k}$.
With tensor products in clear, we have the relations $\tau(\omega_{i,j})=(1\otimes \nu-\zeta_{i,j}\otimes 1)\omega_{i,j}$.
Notice that the $\omega_{i,j}$ are naturally constructed in $F$ but need not to belong to $1\otimes_k L_{p^\infty}$, unlike the elements $y_{i,j}$.

\par
The following remark is fundamental in the lines below: the elements $\omega_{i,j}$ are units of $\tilde{k}\otimes L_{p}$.
Indeed, $\tilde{k}\otimes_k L_{p}$ is a simple $\tau$-ring (use for example \cite[Lemma 2.8]{ovchinnikovGaloisTheoryLinear2014} to check this property) and $(\omega_{i,j})$ is a $\tau$-ideal,
since $\tau(\omega_{i,j})=(\nu-\zeta_{i,j})\omega_{i,j}\in (\omega_{i,j})$,
for any $i,j$.
Since by construction $\omega_{i,j}\neq 0$ and because of the $\tau$-simplicity, the $\tau$-ideal $(\omega_{i,j})$ is the whole
$\tilde{k}\otimes_k L_{p}$, which proves that $\omega_{i,j}$ is invertible in $\tilde{k}\otimes_k L_{p}$ and hence in $F$.
\par
Since the $\omega_{i,j}$ are invertible, we can consider the following elements:
    $$w_{i,j}=\frac{D(\omega_{i,j})}{\omega_{i,j}}\in F^\times.$$
By the fact that $D$ commutes with $\tau$, taking logarithmic derivatives of both sides the difference equations
    $$\tau(\omega_{i,j})=(\nu-\zeta_{i,j})\omega_{i,j},$$
we verify that the elements $w_{i,j}$ satisfy the system:
    $$
    \l\{\tau(w_{i,j})=w_{i,j}+\frac{1-D(\zeta_{i,j})}{\nu-\zeta_{i,j}}\r.,~\forall i=1,\dots,d,\hbox{~and~} j=1,\dots,n_i,
    $$
and the elements $\omega_{i,j}\in F$ are differentially independent over $\tilde{K}$ if and only if
the elements $w_{i,j}\in F$ are differentially independent over $\tilde{K}$.

We suppose by contradiction that the $w_{i,j}$ are differentially dependent over $\tilde{K}$.
All the above comments including Lemma \ref{lemma-subtle} allow to apply directly Proposition \ref{prop:HS}
(that is, \cite[Prop.~3.1]{hardouinDifferentialGaloisTheory2008}) which says that there exist
$\lambda_{i,j,r}\in\tilde{k}$ not all zero and
$g\in \tilde{K}$ such that

    \begin{equation}\label{identity}
    \sum_{\begin{smallmatrix}i=1,\dots,d\\ j=0,\dots,n_i\\ r\geq0\end{smallmatrix}}\lambda_{i,j,r}D^r\l(\frac{1-D(\zeta_{i,j})}{\nu-\zeta_{i,j}}\r)=\tau(g)-g,
    \end{equation}
where there are finitely many non-zero coefficients $\lambda_{i,j,r}\in \tilde{k}$.
We stare at this identity as one in $\tilde{K}$ which is, as we already noticed, a purely transcendental extension of $\tilde{k}$ of transcendence degree one,
where as above we continue to identify $\nu$ with $1\otimes \nu$ and $\zeta_{i,j}$ with $\zeta_{i,j}\otimes1$.

Since for all $r\geq 0$ the denominator of $D^r\l(\frac{1-D(\zeta_{i,j})}{\nu-\zeta_{i,j}}\r)$ as a reduced fraction in $\tilde{K}$ has the form $(\nu-\zeta_{i,j})^ {r+1}$,
the subset $\mathcal{E}$ of $\tilde{k}$ of poles in $\nu$ of the left hand side is a non-empty subset of the set of all the $\zeta_{i,j}$.
In particular, if $\zeta,\zeta'\in\mathcal{E}$ are distinct, then their difference $\zeta-\zeta'$ does not belong to $\ZZ$ (recall that
the $\zeta_{i,j}$ are the zeroes of $p$ which is constructed in such a way that if $P,P'$ are distinct irreducible dividing it,
then for all $m\in\ZZ$, $P'$ and $\tau^m(P)$ are relatively prime).

We now move to analyzing the right-hand side of \eqref{identity}. Let $\mathcal{F}$ be the set of poles in $\tilde{k}$
of the expression in the right-hand side, which is an element of $\tilde{K}$.
Obviously $\mathcal{E}=\mathcal{F}$ hence $\mathcal{F}$ is non-empty.
Now, by the fact that
$\tilde{k}$ is algebraically closed, we can expand $g$ in a sum of elementary fractions:
    $$
    g=Q+\sum_{i,j}\frac{a_{i,j}}{(\nu-\eta_i)^j},
    $$
where $Q\in \tilde{k}[\nu]$, $a_{i,j},\eta_j\in\tilde{k}$ for all $i,j$, and $j>0$. Then
    $$
    \tau(g)=Q+\sum_{i,j}\frac{a_{i,j}}{(\nu+1-\eta_i)^j}.
    $$
Since the set $\mathcal{G}:=\{\eta_j\}$ is finite, for any of its elements $\eta$ the set of $m\in\ZZ$ such that $\eta+m\in\mathcal{G}$ is finite so that if $\eta\in\mathcal{F}$, then there exists $m\neq 0$ such that also $\eta+m$
is in $\mathcal{F}$. However, $\mathcal{F}=\mathcal{E}$. This means that there exist $\zeta,\zeta'\in\mathcal{E}$ distinct with
$\zeta-\zeta'\in\ZZ$ which is impossible. Hence the $w_{i,j}$ are differentially independent over $\tilde{K}$.
\par
Since the $\omega_{i,j}$ are invertible in $F$, the subset of $F$
    $$
    Z=\l\{D^r(z_{i,j}); i=1,\dots,d,\, j=1,\dots,n_i,\,r\geq 0\r\}.
    $$
is algebraically dependent over $\tilde{K}$ if and only if
the $w_{i,j}$ are differentially dependent. We conclude that $Z$ is algebraically independent over $\tilde{K}$.
\par
The $\omega_{i,j}$ and the $y_{i,j}$ are both bases of the same $\tilde{k}$-vector space and
we deduce that the set $\l\{D^r(y_{i,j}); i=1,\dots,d,\, j=1,\dots,n_i,\,r\geq 0\r\}$
is algebraically independent over $\tilde{K}$ and a fortiori over $K$.
This ends the proof of the proposition. This also concludes the proof of Theorem \ref{thm:TrdegPV}.
\end{proof}

A curious very simple but nontrivial application is the following.

\begin{cor}
Let $P,Q\in A$ be monic irreducible. Then $K\La_P=K\La_Q$ if and only if there exists $k\in\ZZ$ such that
$\tau^k(P)=Q$.
\end{cor}

\begin{proof}
By Corollary \ref{cor:torsion} if there exists $k\in\ZZ$ such that
$\tau^k(P)=Q$, then $K\La_P=K\La_Q$. Now assume by contradiction that $K\La_P=K\La_Q$ but for all $k$,
$P$ and $\tau^k(Q)$ are coprime. Then we can apply Theorem \ref{thm:TrdegPV} with $p=PQ$ and the field
$K(\La_p)$ has transcendence degree $\deg_t(p)=\deg_t(P)+\deg_t(Q)$. By Lemma \ref{lemma:torsion} and
Corollary \ref{cor:torsion} however, $K(\La_p)=K(\La_P)$ that, again by Theorem \ref{thm:TrdegPV} has transcendence degree $
\deg_t(P)$, which is impossible.
\end{proof}

\section{Ax-Lindemann-Weierstrass theorem related to \texorpdfstring{$\Gamma$}{gamma}}
\label{ALW-gamma}

We fix $\zeta_1,\dots,\zeta_n$ in an algebraic closure of $k$ and we choose a suitable non-empty connected open domain $\Omega\subset \C$ on which
 $\zeta_1,\dots,\zeta_n$ can be identified with analytic functions $\zeta_1(\nu),\dots,\zeta_n(\nu)$, in such a way that also the functions
    \begin{equation}\label{eq:Gammas}
      \Gamma(\nu-\zeta_1(\nu)),\dots,\Gamma(\nu-\zeta_n(\nu))
    \end{equation}
are analytic.
We show:

\begin{thm}\label{thm:Holder-ALW}
Let us suppose that $\zeta_1,\dots,\zeta_n$ are pairwise distinct modulo $\Z$.
Then the functions $ \Gamma(\nu-\zeta_1(\nu)),\dots,\Gamma(\nu-\zeta_n(\nu))$ are differentially independent over $K$.
\end{thm}

\begin{rmk}
If $n=1$ and $\zeta_1\in\Z$, our result implies a stronger version of Hölder's Theorem for the gamma function which can be also deduced directly from \cite[Cor.~3.4]{hardouinDifferentialGaloisTheory2008} applied to
the functional equation $\Gamma(\nu+1)=\nu\Gamma(\nu)$. The statement \cite[Cor.~3.4]{hardouinDifferentialGaloisTheory2008} is a corollary of
\cite[Prop.~3.1]{hardouinDifferentialGaloisTheory2008}, a stronger result that we used crucially in the proof of Theorem~\ref{thm:TrdegPV}. It seems unlikely to deduce our Theorem easily and directly from \cite[Cor.~3.4]{hardouinDifferentialGaloisTheory2008}. We use Theorem \ref{thm:TrdegPV} and the theory of the Carlitz module over meromorphic functions.
\end{rmk}

\begin{proof}[Proof of Theorem~\ref{thm:Holder-ALW}.]
Without loss of generality we can suppose that the set $\{\zeta_1,\ldots,\zeta_n\}$
is closed under the action of the automorphisms of the absolute Galois group of $k$. Indeed, given $\zeta$ algebraic
over $k$, no one of its translated by a non-zero integer can be conjugated to it. This can be resumed in the sentence: ``there are no non-trivial Artin-Schreier extensions in characteristic zero''.

The proof of our theorem results from intermediate lemmas 
presented below.
Let
    $$
    p=a_0+a_1t+\cdots+a_{n-1}t^{n-1}+t^n\in A
    $$
be a polynomial of minimal degree such that $p(\zeta_{i})=0$, for all $i$.
By assumption $p=\hat p=P_1\cdots P_d$, where the polynomials $P_i\in A$ are irreducible.
We set $n_i=\deg_t(P_i)$ and we rename the $\zeta_i$ as $\zeta_{i,j}$ so that for any $i=1,\dots,d$ the element $\zeta_{i,j}$ is a root of $P_i$ for $j=1,\dots,n_i$. From now on we identify algebraic elements over $k$ with analytic functions over
open subsets of $\CC$ therefore we sometimes omit the variable $\nu$ in writing such functions.
We set:
    $$
    y_{i,r}=\sum_{j=1}^{n_i}\zeta_{i,j}^r\Gamma(\nu-\zeta_{i,j}),
    \hbox{for $i=1,\dots,d$ and $r=0,\dots, n_i-1$.}
    $$
Consider $\epsilon>0$.
We denote by $S_\epsilon$ the disjoint union of disks of radius $\epsilon$ centered at the poles
of the coefficient $a_i$ of $p$ (we can choose $\epsilon$ small so this happens) and by $S$ the set of poles of the $a_i$. By the fact that the functions $a_i$ are $1$-periodic, $S_\epsilon$ and $S$ are $\tau$-invariant.
The $y_{i,r}$, unlike the $\zeta_{i,j}$, are uniform on $$\Theta_\epsilon:=\C\smallsetminus S_\epsilon,$$ meaning that they have trivial monodromy. More precisely, setting also $\Theta:=\CC\setminus S$, we have:

\begin{lemma}\label{lemma-tau-S}
For all $\epsilon>0$ small, the functions $y_{i,r}$ admit a meromorphic continuation to $\Theta_\epsilon$,
for $i=1,\dots,d$ and $r=0,\dots, n_i-1$, and their set of poles is discrete in the adherence of $\Theta_\epsilon$. In particular,
the functions $y_{i,r}$ extend to meromorphic functions over $\Theta$.
\end{lemma}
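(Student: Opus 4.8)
The plan is to regard each $y_{i,r}$ as a symmetric function of the roots $\zeta_{i,1},\dots,\zeta_{i,n_i}$ of the monic irreducible polynomial $P_i\in A=k[t]$, and to exploit that $p$, hence every $P_i$, is monic in $t$. The first step is to check that the coefficients of $P_i$, which a priori only lie in $k$, are holomorphic on $\Theta$: for $\nu_0\in\Theta$ the coefficients of $p$ are holomorphic near $\nu_0$, so the roots of $p(\nu,\cdot)$ stay in a fixed disc for $\nu$ near $\nu_0$; the roots of the monic divisor $P_i(\nu,\cdot)$ are among them, so the elementary symmetric functions of the latter---that is, the coefficients of $P_i$---are locally bounded at $\nu_0$, and an element of $k$ (meromorphic on $\C$) that is locally bounded has no pole there. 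In particular the $t$-discriminant $\delta_i:=\operatorname{disc}_t(P_i)\in k$ is holomorphic on $\Theta$, and it is not identically zero because $P_i$ is separable ($k$ has characteristic zero); hence its zero set $Z_i$ is discrete in $\Theta$, and the coefficients of $P_i$ together with $\delta_i^{-1}$ are holomorphic on $\Theta\smallsetminus Z_i$.

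Next I would prove meromorphy away from the branch points. Since $\Gamma$ is meromorphic on $\C$, the two--variable function $t^r\Gamma(\nu-t)$ is meromorphic on $\C^2$, with polar locus contained in $\bigcup_{m\geq0}\{\nu-t=-m\}$. On a neighbourhood of any point of $\Theta\smallsetminus Z_i$ the roots $\zeta_{i,j}$ are $n_i$ distinct holomorphic functions of $\nu$, so each summand $\zeta_{i,j}(\nu)^r\Gamma(\nu-\zeta_{i,j}(\nu))$ is locally meromorphic in $\nu$ (only finitely many of the hypersurfaces $\nu-t=-m$ are met over a compact, since $\zeta_{i,j}(\nu)-\nu$ is locally bounded and is never a constant integer, as $\zeta_{i,j}$ is algebraic over $k$ while $\nu$ is transcendental over $k$); hence $y_{i,r}$ is meromorphic near each such point. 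It is single--valued there because it is invariant under monodromy, which permutes the $\zeta_{i,j}$ among themselves---the coefficients of $P_i$ being single-valued on $\Theta$. \emph{This is exactly where the hypothesis that $\{\zeta_1,\dots,\zeta_n\}$ is stable by Galois conjugation enters}: it is what makes each $P_i$ collect a full Galois orbit, so that the sum defining $y_{i,r}$ runs over a complete set of conjugate branches. Thus $y_{i,r}$ is meromorphic on $\Theta\smallsetminus Z_i$, with polar set discrete there. (Equivalently, $y_{i,r}$ is the trace, along the proper branched covering $\{P_i=0\}\to\Theta$, of the meromorphic function $t^r\Gamma(\nu-t)$.)

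The crux is the meromorphic continuation of $y_{i,r}$ across a point $\nu_0\in Z_i$. By Puiseux's theorem the branches of the algebraic function defined by $P_i(\nu,t)=0$ near $\nu_0$ group into monodromy cycles; a cycle of length $e$ has the form $\{\phi(\eta w):\eta^e=1\}$, where $w$ is an $e$-th root of $\nu-\nu_0$ and $\phi$ is holomorphic at $0$ (again by boundedness of the roots of the monic $p$). Its contribution to $y_{i,r}$ equals $\sum_{\eta^e=1}\Psi(\eta w)$ with $\Psi(w):=\phi(w)^r\Gamma(\nu_0+w^e-\phi(w))$, and $\Psi$ is meromorphic in $w$ near $0$: the argument of $\Gamma$ is holomorphic in $w$ and cannot be identically a non-positive integer, for that would force $\phi(w)=w^e+\text{const}$, i.e. $\zeta_{i,j}=\nu+\text{const}$, which is impossible. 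Writing $\Psi(w)=\sum_{\ell}c_\ell w^\ell$ (with finitely many $\ell<0$), we get $\sum_{\eta^e=1}\Psi(\eta w)=e\sum_{\ell\,:\,e\mid\ell}c_\ell w^\ell=e\sum_{k}c_{ke}(\nu-\nu_0)^k$, a Laurent series in $\nu-\nu_0$, hence meromorphic in $\nu$ at $\nu_0$. Summing over the cycles, $y_{i,r}$ is meromorphic near $\nu_0$; as $Z_i$ is discrete in $\Theta$, $y_{i,r}$ extends to a meromorphic function on all of $\Theta$ with polar set discrete in $\Theta$. Finally, for $\epsilon$ small the closure $\overline{\Theta_\epsilon}$ is a closed subset of $\C$ contained in $\Theta$ (its boundary circles avoid the discrete set $S$), so the polar set of $y_{i,r}$ is discrete in $\overline{\Theta_\epsilon}$ and $y_{i,r}$ restricts to a meromorphic function on $\Theta_\epsilon$; since $\Theta=\bigcup_{\epsilon>0}\Theta_\epsilon$, the last assertion follows.

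I expect the genuinely delicate point to be this third step: showing that the a priori multivalued combination $\sum_j\zeta_{i,j}^r\Gamma(\nu-\zeta_{i,j})$ becomes not merely single-valued but has at worst \emph{poles}---no branching and no essential singularity---at the zeros of $\delta_i$. This rests on summing over a full set of conjugates (the Galois-averaging collapse of the Puiseux expansion) and on ruling out the degenerate case $\zeta_{i,j}=\nu+m$. It should be kept in mind that the essential singularities announced in the introduction occur over $S$, outside $\Theta$, consistently with Nörlund's classical description, which is precisely why the lemma is stated relative to $\Theta$ (or $\Theta_\epsilon$).
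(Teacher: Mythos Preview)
Your argument is correct and follows the same overall strategy as the paper---exploit that each $y_{i,r}$ is a symmetric expression in the full set of conjugate branches $\zeta_{i,1},\dots,\zeta_{i,n_i}$, so that monodromy only permutes the summands---but the execution differs in two places worth noting.

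At ramification points of the $\zeta_{i,j}$ the paper simply observes that the $y_{i,r}$ have no monodromy and that the $\zeta_{i,j}$ take finite values there, and invokes (implicitly) Riemann's removable singularity theorem to conclude analyticity. Your Puiseux symmetrization $\sum_{\eta^e=1}\Psi(\eta w)$ is more explicit and has the advantage of handling uniformly the case where a ramification point happens to coincide with a pole of $\Gamma(\nu-\zeta_{i,j})$, yielding meromorphy (a genuine pole) rather than analyticity; the paper's phrasing ``analytic at the point of $R$'' is slightly loose on this edge case, though harmless for the statement.

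For discreteness of the pole set the paper argues by contradiction with sequences: an accumulation of poles in $\Theta_\epsilon$ would force $\zeta_{i,j}(x_k)-x_k\to\infty$, so $\zeta_{i,j}$ would have a pole at the limit, which must then lie in $S_\epsilon$. You instead use directly that the roots of a monic polynomial with locally bounded coefficients are themselves locally bounded, so only finitely many integers $m$ can satisfy $\nu-\zeta_{i,j}(\nu)=-m$ on a compact. This is cleaner and also gives you for free the preliminary step (absent from the paper) that the coefficients of each irreducible factor $P_i$, a priori only in $k$, are in fact holomorphic on $\Theta$.
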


\begin{proof}
Since $\Gamma$ is a meromorphic function over $\C\smallsetminus\Z_{<0}$ and it has simple poles at
each point of $\Z_{<0}$, the functions $y_{i,r}$ have poles in the set:
    \[
    Z:=\{x\in\Theta_\epsilon: \exists (i,j) \hbox{~such that~} \zeta_{i,j}(x)-x\in\Z_{>0}\}
    \]
    (depending on $\epsilon$).
We also need to study the behavior of $y_{i,r}$ around the set $R$ of ramification points of the $\zeta_{i,j}$ which are not poles of the $a_i$.
The $a_i$ being $1$-periodic functions, both $Z$ and $R$ are $\tau$-invariant.
The set $R$ is discrete since it is the set of the $x\in\Theta_\epsilon$ such that $p$ has multiple roots when its coefficient are specialized at $x$.
\par
Suppose that we have already proved that $Z$ is discrete.
For any $x_0\in\Theta_\epsilon\setminus(Z\cup R)$ there exists an open neighborhood $\Omega\subset\Theta_\epsilon\setminus(Z\cup R)$ of
$x_0$ over which each root $\zeta_{i,j}$ extends to $\Omega$ analytically. Let $\gamma\subset\Theta_\epsilon\setminus(Z\cup R)$ be a closed path from $x_0$ to $x_0$, possibly
``turning around'' some points of $Z\cup R$. By construction, both the $\zeta_{i,j} $ and the $\Gamma(\nu-\zeta_{i,j})$
can be analytically continued along $\gamma$.
The uniqueness of analytic continuation implies that the analytic continuation of each $\zeta_{i,j}$ along $\gamma$
is still a root of $P$ and distinct roots cannot coincide after analytic continuation:
In other words, the action of the
monodromy can only permute the roots, so that the analytic continuation of the function  $y_{i,r}$ on any closed path, whose constituting terms are permuted,
has same initial and final values. Since the $y_{i,r}$ have trivial monodromy at the points of $R$ where the $\zeta_i$ have finite values,
the $y_{i,r}$ are analytic at the points of $R$.
\par
We still need to prove that $Z$ is discrete in $\Theta_\epsilon$. Let $x\in\Z$ and let us suppose by contradiction that there exists an open disk $D$,
centered at $x$ and contained in $\Theta_\epsilon$, such that $D\cap Z$ is infinite.
Since there are only finitely many $\zeta_{i,j}$, there exists a sequence of points $x_k$, $k\geq 0$, and $\iota=(i,j)$,
such that  $\zeta_\iota(x_k)-x_k=N_k\in\Z_{>0}$. Reordering the sequence and extracting a subsequence, we can suppose that the $N_k$ form an ascending sequence
of integer and that the sequence $x_k$ actually admits a limit $x_\infty$ in the adherence of $D$. The limit of $\zeta_\iota(x_k)-x_k$ is therefore $\infty$, which means that
$\zeta_\iota$ has a pole at $x_\infty$. We conclude that $x_\infty$ is the center of one of the disks composing $S_\epsilon$, which means that all but finitely many
points of the sequence $x_k$ do not belong to $\Theta_\epsilon$. We have obtained a contradiction.
The above discussion being valid for all $\epsilon>0$ small, taking the limit $\epsilon\rightarrow0$ we deduce that the functions $y_{i,r}$ are meromorphic over $\Theta$.
\end{proof}

In particular, the set of poles of the functions $y_{i,r}$ is locally discrete in $\Theta$. We point out that the functions $y_{i,r}$ may have essential singularities in $S$.

The automorphism $\tau$ extends to an automorphism of the field of meromorphic functions $\mathcal{M}_{\Theta}$ over $\Theta$.
The field $\mathcal{M}_{\Theta}$ contains $\cM$ and hence $k$, and its field of constants $k_\Theta$ is the field of $1$-periodic functions that are meromorphic over $\Theta$ and possibly have
essential singularities in $S$, the extension $k_\Theta/k$ may not be algebraic. The identity function $\nu$ is transcendental over $k_\Theta$. It makes sense to say that an element of  $\cM_{\Theta}$ is a solution of $C_p(y)=0$.

\begin{lemma}\label{lemma-y-solutions}
For any $i=1,\dots,d$ and $r=0,\dots, n_i-1$, the functions $y_{i,r}$ are solutions of $C_p(y)=0$.
\end{lemma}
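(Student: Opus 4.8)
The plan is to deduce the claim from a single computation built on the functional equation $\Gamma(z+1)=z\Gamma(z)$ and the fact that $C$ is a $k$-algebra homomorphism. The crucial point is the identity
$$ C_t(y_{i,r})=y_{i,r+1}\qquad(r\geq 0), $$
where the right-hand side is obtained by extending the defining formula $y_{i,r}=\sum_{j=1}^{n_i}\zeta_{i,j}^{\,r}\,\Gamma(\nu-\zeta_{i,j})$ to every integer $r\geq 0$. Since, by Lemma~\ref{lemma-tau-S}, the functions $y_{i,r}$ and $y_{i,r+1}$ are honest elements of the field $\cM_\Theta$, it is enough to check this identity for germs at a generic point $\nu_0\in\Theta$, chosen away from the poles and from the ramification points of the $\zeta_{i,j}$ (and so that $\nu_0+1$ has the same properties). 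There each $\zeta_{i,j}$ is single-valued and analytic, and since the coefficients of $P_i$ lie in $k$ and are therefore $1$-periodic, the shift $\nu\mapsto\nu+1$ permutes the roots of $P_i$, say $\tau(\zeta_{i,j})=\zeta_{i,\pi(j)}$ with $\pi$ a permutation of $\{1,\dots,n_i\}$. Applying the functional equation with $z=\nu-\zeta_{i,\pi(j)}$ yields $\tau\big(\Gamma(\nu-\zeta_{i,j})\big)=\Gamma(\nu+1-\zeta_{i,\pi(j)})=(\nu-\zeta_{i,\pi(j)})\,\Gamma(\nu-\zeta_{i,\pi(j)})$, whence, after reindexing the sum by $\pi$,
$$ \tau(y_{i,r})=\sum_{j=1}^{n_i}\zeta_{i,\pi(j)}^{\,r}(\nu-\zeta_{i,\pi(j)})\,\Gamma(\nu-\zeta_{i,\pi(j)})=\nu\,y_{i,r}-y_{i,r+1}, $$
and therefore $C_t(y_{i,r})=(\nu-\tau)(y_{i,r})=y_{i,r+1}$.

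From this the rest is formal. Iterating, $C_{t^r}(y_{i,0})=y_{i,r}$ for all $r\geq 0$. Writing $P_i=\sum_{m=0}^{n_i}c_m t^m$ with $c_m\in k$ and using that $C$ is a $k$-algebra homomorphism, one gets
$$ C_{P_i}(y_{i,0})=\sum_{m=0}^{n_i}c_m\,y_{i,m}=\sum_{j=1}^{n_i}\Big(\sum_{m=0}^{n_i}c_m\,\zeta_{i,j}^{\,m}\Big)\Gamma(\nu-\zeta_{i,j})=\sum_{j=1}^{n_i}P_i(\zeta_{i,j})\,\Gamma(\nu-\zeta_{i,j})=0, $$
because each $\zeta_{i,j}$ is a root of $P_i$. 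Consequently $C_{P_i}(y_{i,r})=C_{P_i}C_{t^r}(y_{i,0})=C_{t^r}C_{P_i}(y_{i,0})=0$ for every $r$, since $A$ is commutative and $C$ is a homomorphism. Finally $P_i$ divides $p$, so writing $p=qP_i$ we have $C_p=C_q\circ C_{P_i}$ and hence $C_p(y_{i,r})=C_q\big(C_{P_i}(y_{i,r})\big)=0$, which is the assertion.

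The only genuinely delicate point is the first step: the individual functions $\zeta_{i,j}$ and $\Gamma(\nu-\zeta_{i,j})$ are multivalued on $\Theta$, so the relation $\tau(\zeta_{i,j})=\zeta_{i,\pi(j)}$ and the computation of $\tau(y_{i,r})$ make literal sense only at the level of germs; what makes the argument go through is that one knows a priori, by Lemma~\ref{lemma-tau-S}, that the specific combinations $y_{i,r}$ are single-valued, so that a germ-level identity between such combinations is an identity in $\cM_\Theta$. Everything after that takes place in the skew polynomial ring and is routine.
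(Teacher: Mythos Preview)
Your proof is correct and follows essentially the same approach as the paper's: both establish the key identity $C_t(y_{i,r})=y_{i,r+1}$ by using the functional equation of $\Gamma$ together with the fact that the shift permutes the roots $\zeta_{i,j}$ of $P_i$, and then annihilate $y_{i,r}$ by evaluating a polynomial in $C_t$. The only cosmetic difference is that the paper applies $C_p$ directly (using $p(\zeta_{i,j})=0$ for all $i,j$), whereas you first show $C_{P_i}(y_{i,r})=0$ and then invoke $P_i\mid p$.
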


\begin{proof}
Let us consider an open disk $\Xi$ contained in $\Theta$ where the $\zeta_{i,j}$ and the $\Gamma(\nu-\zeta_{i,j})$ are all analytic and consider $x_0\in\Xi$.
Modifying $\Xi$ we can suppose that $\Xi\cap\tau^k(\Xi)=\emptyset$ for all $k\neq0$.
Choose a continuous path $\gamma:[0,1]\to\Theta$ avoiding the poles of the functions $\Gamma(\nu-\zeta_{i,j})$ and
such that $\gamma(0)=x_0$ and $\gamma(1)=x_0+1\in\Xi+1=\tau(\Xi)$.  We consider the analytic continuations of the $\zeta_{i,j}$
over an open ``tube'' $\Xi_\gamma$ from $\Xi$
to $\tau(\Xi)$ containing $\gamma$ and avoiding singularities. We keep denoting by $\zeta_{i,j}$ such analytic continuations.
Since the coefficients of $p$ are $1$-periodic there exist, for $i=1,\dots,d$, permutations $\sigma_i$ of $\{1,\dots,n_i\}$ such that we have:
    \[
    \zeta_{i,j}(\nu+1)=\zeta_{i,\sigma_i(j)}(\nu),\hbox{~for all $\nu\in\Xi$.}
    \]
For any $i=1,\dots,d$ and $r=0,\dots, n_i-1$ we have:
    $$
    \begin{array}{rcl}
          \tau(y_{i,r})
    &=&\displaystyle\sum_{j=1}^{n_i}\zeta_{i,j}^r(\nu+1)\Gamma(\nu+1-\zeta_{i,j}(\nu+1))\\~ \\
    &=&\displaystyle\sum_{j=1}^{n_i}\zeta_{i,\sigma_i(j)}^r(\nu)(\nu-\zeta_{i,\sigma_i(j)}(\nu))\Gamma(\nu-\zeta_{i,\sigma_i(j)}(\nu))\\~\\
    &=&\displaystyle\sum_{j=1}^{n_i}\zeta_{i,j}^r(\nu)(\nu-\zeta_{i,j}(\nu))\Gamma(\nu-\zeta_{i,j}(\nu)).
    \end{array}
    $$
Therefore
\begin{equation}\label{Ct-y}C_t(y_{i,r})=\sum_{j=1}^{n_i}\,\zeta_{i,j}^{r+1}(\nu)\,\,\Gamma(\nu-\zeta_{i,j}(\nu))
\end{equation} and:
    $$
    C_p(y_{i,r})=\sum_{j=1}^{n_i}\,\zeta_{i,j}^r(\nu)\,p(\zeta_{i,j}(\nu))\,\Gamma(\nu-\zeta_{i,j}(\nu))=0.
    $$
Since $\gamma$
is arbitrary, $C_p(y_{i,r})=0$ (over $\Theta$).
\end{proof}

We consider functions $\zeta_1,\ldots,\zeta_n$ that are algebraic over $k$ with the property that if $i\neq j$, then $\zeta_i$ and $\zeta_j+s$
are not conjugate over $k$ for all $s\in\ZZ$. Let $\Theta\subset\CC$ be as in Lemma \ref{lemma-tau-S}. We can find a non-empty open subset $\Omega$ of $\Theta$ such that $\tau^N(\Omega)=\Omega$ for some $N>0$, over which the functions $\zeta_i$ and $\Gamma(\nu-\zeta_i)$ are  analytic for all $i$. Additionally we can choose $N$ so that $\tau^N(\zeta_i)=\zeta_i$ for all $i$. Let $\ell$ be the field of
meromorphic functions $f$ over $\Omega$ such that there exists $n>0$, depending on $f$, with $\tau^n(f)=f$ and $N\mid n$.
Clearly, $\ell$ contains the $1$-periodic meromorphic functions over $\Omega$ and all the $\zeta_i$. Moreover $\nu$ is transcendental over the field $\ell$.

\begin{lemma}\label{lemma1}
Under the assumptions of Theorem~\ref{thm:Holder-ALW},
the functions $\Gamma(\nu-\zeta_1),\ldots,\Gamma(\nu-\zeta_m)$ are linearly independent over $\ell(\nu)$.
\end{lemma}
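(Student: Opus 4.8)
The plan is to argue by contradiction, first reducing to a statement about a single first-order $\tau^N$-difference equation and then ruling that out by a telescoping computation of orders of vanishing.

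Suppose $\sum_{i=1}^m c_i\,\Gamma(\nu-\zeta_i)=0$ is a nontrivial relation with $c_i\in\ell(\nu)$, and choose one whose support has minimal cardinality; since no $\Gamma(\nu-\zeta_i)$ vanishes identically, the support has at least two elements. After relabelling we may assume $m$ lies in the support together with some $j\neq m$, and (dividing by $c_m$) that $c_m=1$. Recall that $N$ was chosen with $\tau^N(\zeta_i)=\zeta_i$ and $\tau^N(\Omega)=\Omega$; applying the functional equation in the form $\Gamma(x+N)=\prod_{s=0}^{N-1}(x+s)\,\Gamma(x)$ with $x=\nu-\zeta_i$ yields
\[
\tau^N\bigl(\Gamma(\nu-\zeta_i)\bigr)=Q_i(\nu)\,\Gamma(\nu-\zeta_i),\qquad Q_i(\nu):=\prod_{s=0}^{N-1}(\nu+s-\zeta_i)\in\ell[\nu],
\]
which is monic of degree $N$ in $\nu$. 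Applying $\tau^N$ to the relation and subtracting $Q_m$ times the original relation annihilates the term $i=m$ and produces a relation whose support is strictly smaller; minimality forces it to be trivial, so $\tau^N(c_i)\,Q_i=c_i\,Q_m$ for all $i$ in the support. Taking $i=j$ and setting $c:=c_j\in\ell(\nu)^\times$, $\zeta:=\zeta_m$, $\zeta':=\zeta_j$, it remains to show that there is no $c\in\ell(\nu)^\times$ with $\tau^N(c)/c=Q_\zeta/Q_{\zeta'}$, where $Q_\zeta=\prod_{s=0}^{N-1}(\nu+s-\zeta)$ and similarly for $Q_{\zeta'}$.

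For this I would run a telescoping argument along the chain of $\tau^N$-fixed points $\zeta-kN\in\ell$, $k\in\ZZ$. For any $a\in\ell$ with $\tau^N(a)=a$ and any $f\in\ell(\nu)^\times$, comparing orders at the place $\nu=a$ of the field $\ell(\nu)$ over $\ell$ gives $v_a(\tau^N(f))=v_{a+N}(f)$: the substitution $\nu\mapsto\nu+N$ merely translates the place, while the twist of the coefficients of $f$ by the automorphism $\tau^N$ does not change the order of vanishing at a $\tau^N$-fixed point. Summing over $k$, the series $\sum_{k\in\ZZ}v_{\zeta-kN}(\tau^N(f)/f)$ telescopes to $0$, there being only finitely many nonzero terms. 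Applying this to $f=c$ gives $\sum_{k\in\ZZ}v_{\zeta-kN}(Q_\zeta/Q_{\zeta'})=0$. On the other hand $Q_\zeta$ has a simple zero at $\zeta-s$ for $0\le s\le N-1$, of which exactly one ($s=0$) lies in the progression $\{\zeta-kN\}$, while the zeros of $Q_{\zeta'}$, which sit at $\zeta'-s$, avoid that progression because $\zeta-\zeta'\notin\ZZ$; the same fact shows $Q_\zeta/Q_{\zeta'}$ is already in lowest terms. Hence $\sum_{k\in\ZZ}v_{\zeta-kN}(Q_\zeta/Q_{\zeta'})=1$, a contradiction, which proves the lemma.

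The verifications I would treat as routine are that $\Gamma(\nu-\zeta_i)$ and the operators $Q_i$ genuinely live in, respectively act on, the field of meromorphic functions over $\Omega$ — this is ensured by $\tau^N(\Omega)=\Omega$, $\tau^N(\zeta_i)=\zeta_i$ and Lemma~\ref{lemma-tau-S} — together with the order identity $v_a(\tau^N f)=v_{a+N}(f)$ at $\tau^N$-fixed places. The one delicate point, and I expect the crux of the whole argument, is that the coefficients of $c$ lie in $\ell$ and are moved nontrivially by $\tau^N$: this is precisely why the telescoping must be carried out along a chain of $\tau^N$-\emph{fixed} points rather than over an arbitrary shift-orbit in an algebraic closure of $\ell$, and it is also the only place where the hypothesis ``$\zeta_i$ pairwise distinct modulo $\ZZ$'' is used, namely to push the zeros of $Q_{\zeta'}$ off the progression through $\zeta$.
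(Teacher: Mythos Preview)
Your argument is correct. The opening reduction---take a minimal relation, normalize $c_m=1$, apply $\tau^N$ and subtract to force $\tau^N(c_j)Q_j=c_jQ_m$---is exactly the paper's. The divergence is in how the resulting identity $\tau^N(c)/c=Q_\zeta/Q_{\zeta'}$ is ruled out. The paper iterates: it applies $\tau^n$ for every $n$ with $N\mid n$, obtaining $\lambda_i/\tau^n(\lambda_i)=(\nu-\zeta_i)^{(n)}/(\nu-\zeta_m)^{(n)}$, and then observes that the ``size'' (sum of numerator and denominator degrees) of the left side is bounded uniformly in $n$ while that of the right side tends to infinity, using that the two Pochhammer products are coprime. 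You instead stay at the single step $n=N$ and run a valuation telescope along the $\tau^N$-fixed arithmetic progression $\{\zeta-kN\}$: since $\tau^N$ sends the place $\nu=a$ to $\nu=a-N$ whenever $\tau^N(a)=a$, the sum $\sum_k v_{\zeta-kN}(\tau^N(c)/c)$ collapses to zero, whereas $Q_\zeta$ contributes exactly one simple zero to that progression and $Q_{\zeta'}$ none (here is where $\zeta-\zeta'\notin\ZZ$ enters). Both endings use the hypothesis in the same way; yours avoids the passage to the limit and is slightly more algebraic in flavor, while the paper's size argument is perhaps more transparent about why the obstruction grows without bound.
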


\begin{proof}
Suppose that there is a non-trivial linear dependence relation between functions over $\Omega$:
\begin{equation}\label{initial-id}
\sum_{i=1}^m\lambda_i\Gamma(\nu-\zeta_i)=0\end{equation}
with coefficients $\lambda_i\in\ell(\nu)$. Without loss of generality we can suppose that $\lambda_i\neq0$ for all $i$, that $\lambda_m=1$, and that the relation is of minimal length with the non-triviality property.
Recall that for $n\geq0$, $$\tau^n(\Gamma)=(\nu)^{(n)}\Gamma$$
where $(x)^{(n)}=x(x+1)\cdots(x+n-1)$ is the ascending Pochammer polynomial of degree $n$.
If $\zeta$ is algebraic over $k$ and analytic over $\Omega$, a simple computation tells us that
$$\tau^n(\Gamma(\nu-\zeta))=(\nu-\tau^{n}(\zeta))^{(n)}\Gamma(\nu-\tau^n(\zeta)),$$
where $\tau(\zeta),\tau^2(\zeta),\ldots$ are conjugate of $\zeta$ over $k$ and analytic over $\Omega$. These functions belong to $\ell$.
Our hypothesis implies that if $i\neq j$ the polynomials
$(\nu-\tau^{a}(\zeta_i))^{(a)},(\nu-\tau^{b}(\zeta_j))^{(b)}$ are relatively prime for all $a,b$. Indeed otherwise they would have a common linear factor,
implying that $\zeta_i$ and $\zeta_j+s$ are conjugate for some $s\in\ZZ$ and $i\neq j$ against our assumptions.

Apply $\tau^n$ on both sides of (\ref{initial-id}). We get:
$$\sum_{i=1}^m\tau^n(\lambda_i)(\nu-\tau^n(\zeta_i))^{(n)}\Gamma(\nu-\tau^n(\zeta_i))=0.$$
Supposing further that $N\mid n$ we get, over $\Omega$:
$$\sum_{i=1}^m\tau^n(\lambda_i)(\nu-\zeta_i)^{(n)}\Gamma(\nu-\zeta_i)=0.$$

Dividing by $(\nu-\zeta_m)^{(n)}$ and subtracting we obtain
an $\ell(\nu)$-linear dependence relation connecting the functions $\Gamma(\nu-\zeta_i)$ of length $<m$ so that it is trivial. This means that for all $n$,
$$\lambda_i=\tau^n(\lambda_i)\frac{(\nu-\zeta_i)^{(n)}}{(\nu-\zeta_m)^{(n)}},\quad i=1,\ldots,m-1,$$
but this is impossible. If $s:\ell(\nu)^\times\rightarrow\NN$ is the function (size) that associates to a non-zero rational function $\frac{a}{b}$
with $a,b$ coprime in $\ell[\nu]$ the sum of the degrees of $a$ and $b$, we see that there exists an integer $n_0$
such that $s\left(\frac{\lambda_i}{\tau^n(\lambda_i)}\right)\leq n_0$ for all $n$ with $N\mid n$, while $s\left(\frac{(\nu-\zeta_i)^{(n)}}{(\nu-\zeta_m)^{(n)}}\right)$ tends to infinity
as $n$ increases, due to the hypothesis on $\zeta_1,\ldots,\zeta_m$. Hence $\lambda_1=\ldots=\lambda_m=0$
and the functions $\Gamma(\nu-\zeta_i)$ are linearly independent over $\ell(\nu)$.
\end{proof}

Thanks to the previous Lemma we can now prove that the functions $y_{i,r}$ determine a $k_\Theta$-basis of solutions of the equation $C_p(y)=0$ in $\mathcal{M}_{\Theta}$:

\begin{lemma}\label{linear-independence}
Under the assumptions of Theorem~\ref{thm:Holder-ALW}, the functions $y_{i,r}$, for $i=1,\dots,d$ and $r=0,\dots, n_i-1$, are $k_\Theta$-linearly independent.
\end{lemma}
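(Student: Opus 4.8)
The plan is to reduce the statement to Lemma~\ref{lemma1} by an invertible Vandermonde change of basis performed inside each block of conjugate roots. First I would record the matrices $V_i=\bigl(\zeta_{i,j}^{\,r}\bigr)_{0\le r\le n_i-1,\ 1\le j\le n_i}$ for $i=1,\dots,d$, so that the column vectors $(y_{i,0},\dots,y_{i,n_i-1})^{t}$ and $\bigl(\Gamma(\nu-\zeta_{i,1}),\dots,\Gamma(\nu-\zeta_{i,n_i})\bigr)^{t}$ are related by $V_i$. Since $P_i$ is irreducible over a field of characteristic zero it is separable, so $\zeta_{i,1},\dots,\zeta_{i,n_i}$ are pairwise distinct as analytic functions on $\Omega$; hence $\det V_i=\prod_{j<j'}(\zeta_{i,j'}-\zeta_{i,j})$ is a nonzero element of $\ell$, i.e.\ $V_i$ is invertible over $\ell$. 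Consequently, block by block the families $\{y_{i,r}\}_r$ and $\{\Gamma(\nu-\zeta_{i,j})\}_j$ span the same $\ell(\nu)$-subspace of $\cM_\Omega$, and as they have the same cardinality $n=\deg_t p$, one is linearly independent over $\ell(\nu)$ if and only if the other is.

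Next I would invoke Lemma~\ref{lemma1}. The $\zeta_{i,j}$ are the roots of $p=\hat p$, whose irreducible factors obey the hypotheses of Theorem~\ref{thm:Holder-ALW}, so the $\zeta_{i,j}$ are pairwise distinct modulo $\Z$; therefore the functions $\Gamma(\nu-\zeta_{i,j})$ are linearly independent over $\ell(\nu)$, and by the previous paragraph so are the $y_{i,r}$. Finally I would descend to $\cM_\Theta$: given $1$-periodic functions $c_{i,r}$ in $\cM_\Theta$ with $\sum_{i,r}c_{i,r}y_{i,r}=0$, restrict the identity to $\Omega$. Each $c_{i,r}|_\Omega$ is then a $1$-periodic meromorphic function on $\Omega$, hence (taking $n=N$ in the definition of $\ell$) lies in $\ell\subset\ell(\nu)$, while $y_{i,r}|_\Omega\in\cM_\Omega$; the independence just established forces $c_{i,r}|_\Omega=0$, and since $\Theta=\CC\setminus S$ is connected, $c_{i,r}=0$ on $\Theta$.

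The one point that requires care is the invocation of Lemma~\ref{lemma1}: in the form stated it assumes the $\zeta$'s pairwise \emph{non-conjugate} modulo $\Z$, whereas within a single block $\zeta_{i,1},\dots,\zeta_{i,n_i}$ are the Galois-conjugate roots of $P_i$, so that stronger condition fails. I would deal with this by observing that the proof of Lemma~\ref{lemma1} uses only pairwise distinctness modulo $\Z$: the sole place where the hypothesis enters is the coprimality of the Pochhammer polynomials $(\nu-\zeta_i)^{(n)}$ and $(\nu-\zeta_m)^{(n)}$ in the closing size estimate (valid for $N\mid n$, $n$ large), and two such polynomials share a factor exactly when $\zeta_i-\zeta_m\in\Z$. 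Hence the argument applies verbatim to the family $\{\zeta_{i,j}\}$, and I expect no serious obstacle beyond this; the remaining steps are routine bookkeeping about restriction of meromorphic functions.
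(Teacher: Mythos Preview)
Your argument is correct and matches the paper's approach: both pass from $\{y_{i,r}\}$ to $\{\Gamma(\nu-\zeta_{i,j})\}$ via the invertible block-Vandermonde relation $y=M\gamma$ and then appeal to Lemma~\ref{lemma1}; the paper closes by invoking the Casorati matrix in $\cM_\Theta$ whereas you conclude by restriction to $\Omega$, but these are equivalent endings. Your caveat about the hypotheses of Lemma~\ref{lemma1} is well taken and correctly resolved---the paper applies that lemma to the conjugate roots $\zeta_{i,j}$ without commenting on this point.
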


\begin{proof} By Lemma \ref{lemma1} the functions $\Gamma(\nu-\zeta_{i,j})$ are $k_\Theta$-linearly independent.
For any $i=1,\dots,d$, denote by $y_i$ the column vector with $n_i$ entries $y_{i,0},\ldots,y_{i,n_{i}-1}$ in $\mathcal{M}_{\Theta}$. We have, by definition:
    \[
    y_i=
    \begin{pmatrix}
      1 & \cdots & 1 \\
      \zeta_{i,1} & \cdots & \zeta_{i,n_i} \\
      \vdots & \cdots & \vdots\\
      \zeta_{i,1}^{n_i-1} & \cdots & \zeta_{i,n_i}^{n_i-1}\\
    \end{pmatrix}
    \begin{pmatrix}\Gamma(\nu-\zeta_{i,1})\\\vdots\\\Gamma(\nu-\zeta_{i,n_i})\end{pmatrix}.
    \]
    If we now write $y$ for the column vector obtained by concatenating the vectors $y_1,\ldots,y_d$ and by $\gamma$
    the column vector with entries the functions $\Gamma(\nu-\zeta_{i,j})$ ordered in compatibility with the construction of
    $y$, we have
    \begin{equation}\label{yMg}y=M\gamma\end{equation}
    where $M$ is a block diagonal matrix and the blocks on the diagonals are Vandermonde matrices.
The $\zeta_{i,j}$ being pairwise distinct, this matrix is invertible, hence its columns are linearly independent over $k$ and the entries $y_{i,r}$ of $y$ are linearly independent over $k$, too.
So the Casorati matrix associated to the functions $y_{i,r}$ ($i$ fixed) is invertible, which means, by Lemma \ref{lemma:tauwronskian}, that they are not only linearly independent over $k$, but also over the meromorphic
$1$-periodic functions over $\Theta$, since the $y_{i,r}$ are themselves meromorphic over $\Theta$.
\end{proof}

\subsubsection*{End of proof of Theorem \ref{thm:Holder-ALW}.}
After the above discussion of preliminary properties of the basis $y=(y_{i,r})$ of solutions of $C_p(y)=0$, we can
conclude the proof of Theorem \ref{thm:Holder-ALW}.

By Theorem~\ref{praagman-theorem} (Praagman's Theorem) and using that the Casorati matrix of
a basis of solutions of the homogeneous linear difference equation $C_p(y)=0$ is an invertible solution of a difference system of the form
(\ref{eq:sys}), for all $i=1,\dots,d$ there exist
an invertible square matrix $H_i$ of order $n_i$, whose entries are $1$-periodic meromorphic functions over $\Theta$, such
that
    $$
    m_i:=(m_{i,r}):=H_iy_i
    $$
with the entries $m_{i,r}$ of $m_i$ that are meromorphic functions over $\CC$ determining, with $i$ running from $1$ to $d$, a $k$-basis of solutions of $C_p(y)=0$ in $\mathcal{M}$. We also write $m$ for the column vector obtained by concatenation of the vectors $m_1,\ldots,m_d$ in the appropriate order.
It follows from Theorem~\ref{thm:TrdegPV} that the coefficients of $m$, that is the functions $m_{i,r}$, and all their derivatives, are algebraically independent over $K$; we want
to obtain a similar conclusion for the $y_{i,r}$.
\par
If we prove that the $y_{i,r}$ are differentially independent over $K$ then the
same holds for the functions $\Gamma(\nu-\zeta_{i,j})$, $i=1,\dots,d$ and $j=1,\dots,n_i$, thanks to (\ref{yMg}) and the already noticed invertibility of the matrix $M$.

Suppose by contradiction that the functions $y_{i,r}$ are differentially dependent over $K$.
We shall now deduce that the functions $m_{i,r}$ are
differentially dependent over $K$ as well.
\par
Denote by $k_H$ the extension of $k$ in $\mathcal{M}_\Theta$ generated by all the entries of all the matrices $H_i$ as well as
all their derivatives (so this field is invariant under $D=\frac{d}{d\nu}$). Now observe:

\begin{lemma}\label{lemma-independent-M-k}
If a set of meromorphic $1$-periodic functions over $\Theta$ is linearly independent over $k$, then it is
linearly independent over $\cM$.
\end{lemma}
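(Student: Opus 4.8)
The plan is to argue by contradiction, supposing there exist meromorphic $1$-periodic functions $f_1,\ldots,f_r$ over $\Theta$ that are linearly independent over $k$ but satisfy a nontrivial linear relation $\sum_i g_i f_i = 0$ with $g_i\in\cM$, not all zero, of minimal length $r$. By dividing, I may assume $g_r = 1$. Since the $f_i$ are $\tau$-invariant, applying $\tau$ to the relation gives $\sum_i \tau(g_i) f_i = 0$; subtracting the two relations yields $\sum_{i<r}(g_i-\tau(g_i))f_i = 0$, a relation of length $<r$, hence trivial by minimality. Therefore each $g_i$ is $\tau$-invariant, i.e.\ $g_i\in k$, contradicting the assumed $k$-linear independence of the $f_i$.

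The subtlety I should be careful about is what ``$\tau$-invariant element of $\cM$'' means here: the functions $f_j$ lie in $\cM_\Theta$ and are $1$-periodic there, while the coefficients $g_i$ are honestly in $\cM$ (meromorphic on all of $\CC$). The argument above shows $\tau(g_i)=g_i$ as elements of $\cM$, so $g_i$ is a $1$-periodic meromorphic function on $\CC$, i.e.\ $g_i\in k$; there is no discrepancy between the two notions of constants here because $g_i\in\cM$ and $\cM^\tau=k$. So the contradiction is clean. Note also that the minimality step requires the relation of length $<r$ to be supported on a subset of $\{f_1,\dots,f_{r-1}\}$, which it is, and that such a proper sub-family is again $k$-linearly independent, so the inductive/minimality setup is consistent.

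I do not expect any real obstacle in this lemma; the only thing to state carefully is that the Casoratian-type ``dividing out by $\tau$-invariance'' trick applies verbatim since $\tau$ is a field automorphism of $\cM$ fixing exactly $k$, and the $f_i$ being genuine $\tau$-constants is precisely what makes the telescoping work. One may alternatively phrase it via Lemma~\ref{lemma:tauwronskian}: $k$-linear independence of $f_1,\dots,f_r$ means $\Cas_\tau(f_1,\dots,f_r)$ has maximal rank over the relevant field; but since $\tau(f_i)=f_i$ this Casoratian is just the matrix with all rows equal to $(f_1,\dots,f_r)$, so it has rank one, forcing $r=1$ — wait, that is the wrong reading; the correct use is that $\cM$-linear dependence would force the rank of $\Cas_\tau$ over $\cM$ to drop, but since the matrix has identical rows its rank over any field is $\le 1$, so in fact for $r\ge 2$ the functions are always $\cM$-linearly dependent, which shows the contradiction only arises when $r\ge 2$, and for $r\ge 2$ constant functions cannot be $k$-linearly independent unless $r=1$. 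The contradiction-by-minimality phrasing in the first paragraph is cleaner and is the one I would write up.
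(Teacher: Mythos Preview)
Your proof is correct and follows essentially the same argument as the paper: assume a minimal-length $\cM$-linear dependence among $k$-independent $1$-periodic functions, normalize one coefficient to $1$, apply $\tau$ and subtract to obtain a shorter relation, and conclude by minimality that all coefficients lie in $k$. Your remark that the coefficients $g_i$ live in $\cM$ (so $\tau(g_i)=g_i$ really forces $g_i\in k=\cM^\tau$) is exactly the point; you may safely drop the Casoratian aside, which as you noticed does not lead anywhere useful here.
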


\begin{proof}
Let $f_1,\dots,f_n$ be $1$-periodic functions meromorphic over $\Theta$ which are linearly dependent over $\cM$.
Let $\la_i\in\cM$ be such that $\sum_i\la_i f_i=0$. Suppose that the functions $f_i$ are linearly independent over $k$,
which implies that at least one of the functions $\la_i$ belongs to $\cM\smallsetminus k$.
Then we can assume $n$ to be minimal for such a property.
Hence, we can assume that the $\la_i$ are all non zero and rescaling, we can also suppose that at least one of them is equal to $1$.
Applying $\tau$ and subtracting the two relations so obtained, one concludes that $\sum_i(\la_i-\tau(\la_i))f_i=0$.
By the minimality of $n$, the functions $\la_i$ all belong to $k$ contradicting the hypothesis that the $f_i$ are linearly independent over $k$.
\end{proof}

We come back to the proof of our Theorem and we conclude it.
Our hypothesis by contradiction on the functions $y_{i,r}$ implies that the functions $m_{i,r}\in\cM$, entries of the vector $m$, are
differentially dependent over $k_H(\nu)$.
Let $\cP$ be a non-trivial algebraic differential operator with coefficients in $k_H(\nu)$ such that
\begin{equation}\label{vanishing-Pm}\cP(m)=0.\end{equation}
We can identify, up to multiplication by a common denominator in $k_H[\nu]$,
$\mathcal{P}$ with a polynomial with coefficients in $k_H[\nu]$ in the independent indeterminates $x_{i,r,j}$
with $1\leq i\leq d$, $0\leq r\leq n_i-1$ and $j\geq0$. The condition (\ref{vanishing-Pm}) amounts to the property that,
replacing $x_{i,r,j}$ with $D^j(m_{i,r})$ for all $i,r,j$, in $\mathcal{P}(x_{i,r,j})$, produces zero identically.
The coefficients of $\mathcal{P}$ can be identified with finitely many polynomials in $k_H[\nu]$. The coefficients of these polynomials, in $k_H$, span a finitely dimensional $k$-subspace of $k_H$ of which we choose a basis $(b_i)_{i\in I}$, with
$I$ finite set of indices; the elements of this basis are $1$-periodic functions of $\mathcal{M}_\Theta$. Thanks to the fact that $k_H$ and $K$ are linearly disjoint in $\mathcal{M}_{\Theta}$ (remember that
$\nu$ is transcendental over $k_H$) we can decompose, uniquely,
$$\mathcal{P}=\sum_ib_i\mathcal{P}_i$$
with $\mathcal{P}_i\in k[\nu][x_{i,r,j}]$, and we have at least one term that does not vanish identically because $\mathcal{P}$ is non-trivial by assumption. Evaluating at $m$ yields:
$$0=\mathcal{P}(m)=\sum_ib_i\mathcal{P}_i(m).$$
But $\mathcal{P}_i(m)\in\mathcal{M}$. By Lemma \ref{lemma-independent-M-k}, $\mathcal{P}_i(m)=0$ for all $i$.
This means that the entries of $m$ are differentially dependent over $K$ contradicting our hypotheses.

This ends the proof of Theorem \ref{thm:Holder-ALW}.
\end{proof}



\begin{thebibliography}{DMOS82}

\bibitem[ABP04]{andersonDeterminationAlgebraicRelations2004}
Greg Anderson, W.~Brownawell, and Matthew Papanikolas, \emph{Determination of
  the algebraic relations among special {{$\Gamma$-values}} in positive
  characteristic}, Annals of Mathematics \textbf{160} (2004), no.~1, 237--313.

\bibitem[AM69]{atiyahIntroductionCommutativeAlgebra1969}
M.~F. Atiyah and I.~G. Macdonald, \emph{Introduction to commutative algebra},
  Addison-Wesley Publishing Co., Reading, Mass.-London-Don Mills, Ont., 1969.
  \MR{242802}

\bibitem[AP15]{anglesUniversalGaussThakurSums2015}
B.~Angl{\`e}s and F.~Pellarin, \emph{Universal {{Gauss-Thakur}} sums and
  {{L-series}}}, Inventiones Mathematicae \textbf{200} (2015), no.~2, 653--669.

\bibitem[Ax71]{ax1971schanuel}
J.~Ax, \emph{On {{Schanuel}}'s conjectures}, Annals of mathematics \textbf{93}
  (1971), no.~2, 252--268.

\bibitem[BCFN21]{BlazquezSanz2021AxSchanuel}
David {Bl{\'a}zquez-Sanz}, Guy Casale, James Freitag, and Joel Nagloo, \emph{A
  differential approach to {{Ax-Schanuel}}, {{I}}}, 2021.

\bibitem[BSY23]{BSY}
G.~Binyamini, H.~Schmidt, and A.~Yafaev, \emph{Lower bounds for galois orbits
  of special points on shimura varieties: a point-counting approach},
  Mathematische Annalen \textbf{385} (2023), 961--973.

\bibitem[BT19]{BT}
B.~Bakker and J.~Tsimerman, \emph{The ax-schanuel conjecture for variations of
  hodge structures}, Inventiones Mathematicae \textbf{217} (2019), no.~1,
  77--94.

\bibitem[Car35]{carlitz1935polynomials}
L.~Carlitz, \emph{On polynomials in a {{Galois}} field: Some formulae involving
  divisor functions}, Proceedings of the London Mathematical Society \textbf{2}
  (1935), no.~1, 116--124.

\bibitem[Cas80]{casorati1880calcolo}
F.~Casorati, \emph{Il calcolo delle differenze finite interpretato ed
  accresciuto di nuovi teoremi a sussidio principalmente delle odierne ricerche
  basate sulla variabilit\`a complessa}, Annali di Matematica Pura ed
  Applicata, Series 2 (1880) \textbf{10} (1880), no.~1, 10--45.

\bibitem[CFN20]{casale2020ax}
G.~Casale, J.~Freitag, and J.~Nagloo, \emph{Ax-{{Lindemann-Weierstrass}} with
  derivatives and the genus 0 {{Fuchsian}} groups}, Annals of Mathematics
  \textbf{192} (2020), no.~3, 721--765.

\bibitem[DMOS82]{DeligneMilneOgusShih1982}
P.~Deligne, J.~S. Milne, A.~Ogus, and K.-Y. Shih, \emph{Relations de
  distributions et exemples classiques}, Hodge Cycles, Motives, and Shimura
  Varieties, Lecture Notes in Math., vol. 900, Springer, 1982, pp.~75--159.

\bibitem[DV21]{divizioDifferenceGaloisTheory2021}
L.~Di~Vizio, \emph{Difference {{Galois Theory}} for the ``{{Applied}}''
  {{Mathematician}}}, Arithmetic and {{Geometry}} over {{Local Fields}},
  Lecture Notes in Mathematics, vol. 2275, Springer Cham, 2021, pp.~29--59.

\bibitem[EP25]{eterovicEquationsInvolvingGamma2025}
S.~Eterovi{\'c} and A.~Padgett, \emph{Some equations involving the gamma
  function}, Transactions of the American Mathematical Society \textbf{378}
  (2025), no.~5, 3445--3469.

\bibitem[GK24]{GK}
Z.~Gao and B.~Klingler, \emph{The ax-schanuel conjecture for variations of
  mixed hodge structures}, Mathematische Annalen \textbf{388} (2024),
  3847--3895.

\bibitem[GMR11]{GunMurtyRath2011}
S.~Gun, M.~R. Murty, and P.~Rath, \emph{Transcendental nature of special values
  of {{L-functions}}}, Canadian Journal of Mathematics \textbf{63} (2011),
  no.~1, 136--152.

\bibitem[Gos12]{goss2012basic}
D.~Goss, \emph{Basic structures of function field arithmetic}, Springer Science
  \& Business Media, 2012.

\bibitem[Har08]{hardouincompositio}
C.~Hardouin, \emph{Hypertranscendance des syst\`emes aux diff\'erences
  diagonaux}, Compositio Mathematica \textbf{144} (2008), no.~3, 565--581.

\bibitem[Hay74]{hayes1974explicit}
D.~R. Hayes, \emph{Explicit class field theory for rational function fields},
  Transactions of the American Mathematical Society \textbf{189} (1974),
  77--91.

\bibitem[H{\"o}l86]{holder1886ueber}
O.~H{\"o}lder, \emph{Ueber die {{Eigenschaft}} der {{Gammafunction}} keiner
  algebraischen {{Differentialgleichung}} zu gen\"ugen}, Mathematische Annalen
  \textbf{28} (1886), no.~1, 1--13.

\bibitem[HS08]{hardouinDifferentialGaloisTheory2008}
C.~Hardouin and M.~F. Singer, \emph{Differential {{Galois}} theory of linear
  difference equations}, Mathematische Annalen \textbf{342} (2008), no.~2,
  333--377.

\bibitem[Kol68]{kolchinProblemsDifferentialAlgebra1968}
E.~R. Kolchin, \emph{Some problems in differential algebra}, Proc.
  {{Internat}}. {{Congr}}. {{Math}}. ({{Moscow}}, 1966), Izdat. ``Mir'',
  Moscow, 1968, pp.~269--276.

\bibitem[Kri76]{kricheverAlgebraicCurvesCommuting1976}
I.~M. Krichever, \emph{Algebraic curves and commuting matrix differential
  operators}, Funkcional. Anal. i Prilo\v zen. \textbf{10} (1976), no.~2,
  75--76.

\bibitem[Kri77]{kricheverMethodsAlgebraicGeometry1977}
\bysame, \emph{Methods of {{Algebraic Geometry}} in the theory of non-linear
  equations}, Russian Mathematical Surveys \textbf{32} (1977), no.~6, 185--213.

\bibitem[KUY16]{KUY}
B.~Klingler, E.~Ullmo, and A.~Yafaev, \emph{The hyperbolic
  {A}x-{L}indemann-{W}eierstrass conjecture}, Publications Mathématiques de
  l'IHÉS \textbf{123} (2016), 333--360.

\bibitem[Lan78]{langRelationsDistributionsExemples1977}
S.~Lang, \emph{{Relations de distributions et exemples classiques}},
  S\'eminaire Delange-Pisot-Poitou. Th\'eorie des nombres \textbf{19}
  (1977--1978), no.~2, 1--6. \MR{520329}

\bibitem[Lan90]{langCyclotomic}
\bysame, \emph{Cyclotomic fields {{I}} and {{II}}}, Grad. Texts in Math. 121,
  Springer- Verlag, New York, 1990.

\bibitem[Lev08]{levinDifferenceAlgebra2008}
A.~Levin, \emph{Difference algebra}, Algebra and Applications, vol.~8,
  Springer, New York, 2008.

\bibitem[MPT19]{MPT}
N.~Mok, J.~Pila, and J.~Tsimerman, \emph{Ax-schanuel for shimura varieties},
  Annals of Mathematics \textbf{189} (2019), no.~3, 945--978.

\bibitem[Mum78]{mumfordAlgebrogeometricConstructionCommuting1978}
D.~Mumford, \emph{An algebro-geometric construction of commuting operators and
  of solutions to the {{Toda}} lattice equation, {{Korteweg deVries}} equation
  and related nonlinear equation}, Proceedings of the {{International
  Symposium}} on {{Algebraic Geometry}} ({{Kyoto Univ}}., {{Kyoto}}, 1977),
  Kinokuniya Book Store, Tokyo, 1978, pp.~115--153. \MR{578857}

\bibitem[N{\"o}r14]{norlundLexistenceSolutionsDune1914}
N.~E. N{\"o}rlund, \emph{{Sur l'existence de solutions d'une \'equation
  lin\'eaire aux diff\'erences finies}}, Annales scientifiques de l'\'Ecole
  Normale Sup\'erieure \textbf{3e s\'erie, 31} (1914), 205--221.

\bibitem[N{\"o}r24]{noerlundVorlesungenUeberDifferenzenrechnung1924}
\bysame, \emph{{Vorlesungen \"uber Differenzenrechnung}}, {Grundlehren math.
  Wiss.}, vol.~13, Berlin: J. Springer, 1924.

\bibitem[OW14]{ovchinnikovGaloisTheoryLinear2014}
A.~Ovchinnikov and M.~Wibmer, \emph{-{{Galois Theory}} of {{Linear Difference
  Equations}}}, International Mathematics Research Notices (2014), rnu060.

\bibitem[Pap23]{papikianDrinfeld2023}
M.~Papikian, \emph{Drinfeld modules}, Graduate Texts in Mathematics, vol. 296,
  Springer, Cham, [2023] \copyright 2023. \MR{4592575}

\bibitem[Pel13]{pellarinGeneralizedCarlitzModule2013}
F.~Pellarin, \emph{On the generalized {{Carlitz}} module}, Journal of Number
  Theory \textbf{133} (2013), no.~5, 1663--1692.

\bibitem[Pil22]{Pila22}
J.~Pila, \emph{Point-counting and the zilber--pink conjecture}, Cambridge
  Tracts in Mathematics, vol. 228, Cambridge University Press, 2022.

\bibitem[Pra86]{praagman_fundamental_1986}
C.~Praagman, \emph{Fundamental solutions for meromorphic linear difference
  equations in the complex plane, and related problems.}, Journal f\"ur die
  reine und angewandte Mathematik (Crelles Journal) \textbf{1986} (1986),
  no.~369, 101--109.

\bibitem[PS25]{PadgettSpeissegger2025}
A.~Padgett and P.~Speissegger, \emph{Definability of complex functions in
  o-minimal structures}, arXiv preprint arXiv:2506.15119 (2025).

\bibitem[PST{{+}}21]{PST}
J.~Pila, A.~Shankar, J.~Tsimerman, H.~Esnault, and M.~Groechenig,
  \emph{Canonical heights on shimura varieties and the andré--oort
  conjecture}, arXiv preprint (2021).

\bibitem[PT14]{PT}
J.~Pila and J.~Tsimerman, \emph{{A}x-{L}indemann for {$\mathcal{A}_g$}}, Annals
  of Mathematics \textbf{179} (2014), no.~2, 659--681.

\bibitem[PW06]{PW}
J.~Pila and A.~J. Wilkie, \emph{The rational points of a definable set}, Duke
  Mathematical Journal \textbf{133} (2006), no.~3, 591--616.

\bibitem[PZ08]{PilaZannier2008}
J.~Pila and U.~Zannier, \emph{Rational points in periodic analytic sets and the
  {{Manin}}--{{Mumford}} conjecture}, Atti della Accademia Nazionale dei
  Lincei. Rendiconti Lincei. Matematica e Applicazioni \textbf{19} (2008),
  no.~2, 149--162.

\bibitem[Sca23]{Scanlon2022}
T.~Scanlon, \emph{Model theory and the {{Andr\'e}}--{{Oort}} conjecture},
  Ast\'erisque \textbf{439} (2023), 343--368.

\bibitem[Tha88]{Thakur1988}
Dinesh~S. Thakur, \emph{Gauss sums for {$\mathbf{F}_q[T]$}}, Inventiones
  Mathematicae \textbf{94} (1988), no.~1, 105--112.

\bibitem[vdPR04]{putKricheverModulesDifference2004}
M.~van~der Put and M.~Reversat, \emph{Krichever modules for difference and
  differential equations}, Analyse complexe, syst\`emes dynamiques,
  sommabilit\'e des s\'eries divergentes et th\'eories galoisiennes. I (2004),
  no.~296, 207--225.

\bibitem[vS97]{putGaloisTheoryDifference1997}
M.~{van der Put} and M.~F. Singer, \emph{Galois theory of difference
  equations}, Lecture Notes in Mathematics, vol. 1666, Springer-Verlag, Berlin,
  1997.

\bibitem[Wal83]{waldschmidt1983}
M.~Waldschmidt, \emph{{Les d\'ebuts de la th\'eorie des nombres transcendants
  (\`a l'occasion du centenaire de la transcendance de {$\pi$})}}, Cahiers du
  s\'eminaire d'histoire des math\'ematiques \textbf{1e s\'erie, 4} (1983),
  93--115.

\bibitem[Woo98]{woodDifferentiallyClosedFields1998}
C.~Wood, \emph{Differentially closed fields}, Model {{Theory}} and {{Algebraic
  Geometry}} (Elisabeth Bouscaren, ed.), vol. 1696, Springer Berlin Heidelberg,
  Berlin, Heidelberg, 1998, pp.~129--141.

\bibitem[Zan12]{Zannier12}
U.~Zannier, \emph{Some problems of unlikely intersections in arithmetic and
  geometry}, Annals of Mathematics Studies, vol. 181, Princeton University
  Press, 2012.

\end{thebibliography}
\end{document}